\title{\bf{Large $2$-groups of automorphisms of curves with positive $2$-rank}}
\date{}
\author{M.~Giulietti ${}^*$ and G.~Korchm\'aros
 \thanks{Research supported by  the Italian
    Ministry MIUR,  Aspetti geometrici, combinatorici e gruppali delle Geometrie di Galois, PRIN 2008}
}
\newtheorem{theorem}{Theorem}[section]
\newtheorem{proposition}[theorem]{Proposition}
\newtheorem{lemma}[theorem]{Lemma}
\theoremstyle{definition}
\newtheorem*{definition*}{Definition}
\newtheorem{rem}[theorem]{Remark}
\newtheorem*{proposition*}{Proposition}
\newtheorem*{corollary*}{Corollary}
\newtheorem*{lemma*}{Lemma}
\def\cC{\mathcal C}
\def\cD{\mathcal D}
\def\cE{\mathcal E}
\def\cP{\mathcal P}
\def\cX{\mathcal X}
\def\cY{\mathcal Y}
\def\cZ{\mathcal Z}
\def\K{\mathbb{K}}
\def\ord{\mbox{\rm ord}}
\def\div{\mbox{\rm div}}
\def\min{{\rm min}}
\def\Tr{\mbox{\rm Tr}}
\def\gg{\mathfrak{g}}
\newcommand{\aut}{\mbox{\rm Aut}}
\newcommand{\ga}{\alpha}
\newcommand{\gb}{\beta}
\newcommand{\gep}{\epsilon}
\newcommand{\gs}{\sigma}
\newcommand{\ha}{{\textstyle\frac{1}{2}}}
\newcommand{\qa}{\textstyle\frac{1}{4}}
\newcommand{\tqa}{\textstyle\frac{3}{4}}
\newcommand{\teit}{\textstyle\frac{3}{8}}
\newcommand{\eit}{\textstyle\frac{1}{8}}
\begin{document}
\maketitle
\thanks{2000 {\em Math. Subj. Class.}: 14H37 }

\thanks{{\em Keywords}: Algebraic curves, Positive characteristic, Automorphism groups.}

    \begin{abstract}
Let $\K$ be an algebraically closed field of characteristic $2$,
and let $\cX$ be a curve over $\K$ of genus $\gg\ge 2$ and $2$-rank $\gamma>0$. For $2$-subgroups $S$ of the $\K$-automorphism group $\aut(\cX)$ of $\cX$, the Nakajima bound is $|S|\leq 4(\gg-1)$. For every $\gg=2^h+1\ge 9$, we construct a curve $\cX$ attaining the Nakajima bound and determine its relevant properties: $\cX$ is a bielliptic curve with $\gamma=\gg$, and its $\K$-automorphism group  has a dihedral $\K$-automorphism group of order $4(\gg-1)$ which fixes no point in $\cX$. Moreover, we provide a classification of $2$-groups $S$ of $\K$-automorphisms not fixing a point of $\cX$ and such that $|S|> 2\gg-1$. \end{abstract}
    \section{Introduction}
In the present paper, $\K$ is an algebraically closed field of
characteristic $2$,  $\cX$ is {a} (projective, non-singular,
geometrically irreducible, algebraic) curve of genus $\gg\geq
2$ and $2$-rank $\gamma$, $\aut(\cX)$ is the $\K$-automorphism group of $\cX$, and $S$
is a subgroup of $\aut(\cX)$ such that $|S|\geq 8$ is a power of $2$.

It is known that $S$ may be quite large compared to $\gg$. Stichtenoth \cite{stichtenoth1973I} proved that if $S$ fixes a point of $\cX$ then $|S|\leq 8\gg^2$. He also pointed out that his upper bound is attained by the non-singular model $\cX$ of the hyperelliptic curve of genus $2^{k-1}$ and equation $Y^2+Y+X^{2^k+1}=0$.

For $\gamma>0$, Stichtenoth's bound can be strengthened. Nakajima \cite{nakajima1987} showed indeed that if $\gamma>0$ then $|S|\leq 4(\gg-1)$.
The problem of finding curves attaining  Nakajima's bound is solved positively in Section \ref{biellipex}, see Theorem \ref{princel2}. For every $n=2^h\geq 8$ and
%modifica 24marzo2011
$n=\gg-1$, we determine such a curve $\cX$ with the following properties: $\cX$ is a bielliptic curve with $\gamma=\gg$ and it has a dihedral $\K$-automorphism group $S$ of order $4(\gg-1)$ which fixes no  point in $\cX$.

On the other hand, Lehr and Matignon \cite{lehr-matignon2005} observed that if $|S|>4(\gg-1)$ then $S$ fixes a point of $\cX$, see also \cite[Remark 11.128,\,Lemma 11.129]{hirschfeld-korchmaros-torres2008}.

The above results has given a motivation to investigate the possibilities for $\cX$, $\gg$ and $S$ when either $|S|$ is close to $8\gg^2$ (and $S$ fixes a point of $\cX$), or $|S|$ is close to $4(\gg-1)$ but $S$ fixes no point of $\cX$.

The first possibilities have recently been investigated by Lehr, Matignon and Rocher, see  \cite{lehr-matignon2005,matignon-rocher2008,rocher1,rocher2}. In \cite{lehr-matignon2005}, it is shown that $|S|\geq 4\gg^2$ only occurs when $\cX$ is the non-singular model of the Artin-Schreier
curve of equation $Y^q+Y+f(X){=0}$ with $f(X)=XP(X)+cX$ where $P(X)$ is an additive polynomial of $\K[X]$ and $q$ is a power of $2$.

To investigate the second possibility the hypotheses below are assumed:
\begin{itemize}
\item[(I)] $|S|\geq 8$ and $|S|>2(\gg-1)$,
%\begin{equation}
%\label{nakabiseq1} |S|>2(g-1).
%\end{equation}
\item[(II)] $S$ fixes no point on $\cX$.
\end{itemize}

Before stating our results we point out the prominent role of central involutions in this context.

Let $u$ be a central involution in $S$, that is an involution $u\in Z(S)$, and consider the associated quotient curve $\bar{\cX}=\cX/U$ where $U=\langle u \rangle$. The factor group $\bar{S}= S/U$ has order $\ha|S|$ and it is a $\K$-automorphism group of $\bar{\cX}$. Also, $\gg-1\geq 2(\bar{\gg}-1)$ where $\bar{\gg}$ is the genus of $\bar{\cX}$.  Therefore, either
\begin{itemize}
\item[(A)]$\bar{\gg}\leq 1$; or
\item[(B)] $\bar{\gg}=2$ and $|\bar{S}|=4$; or
\item[(C)] $\bar{\gg}\geq 2$, and hypothesis (I) is inherited by $\bar{S}$, viewed as a subgroup of $\aut(\bar{\cX)}$, but $\bar{S}$ fixes a point on $\bar{\cX}$; or
\item[(D)] $\bar{\gg}\geq 2$ and both hypotheses (I) and (II) are inherited  by $\bar{S}$, viewed as a subgroup of $\aut(\bar{\cX)}$.
\end{itemize}
If case (D) occurs then $u$ is called an {\em{inductive}} central involution of $S$. Note that,
%modifica 24marzo2011
if $|S|\geq 16$ and
%fine
no non-trivial element in $S$ fixes a point of $\cX$ then every central involution is inductive. It may happen that $\bar{S}$ also has an inductive central involution, say  $\bar{u}$.
In this case the quotient curve $\bar{\bar{\cX}}=\bar{\cX}/\langle \bar{u} \rangle$ with its inherited $\K$-automorphism group $\bar{\bar{S}}=\bar{S}/\langle \bar{u} \rangle$ satisfies both (I) and (II), as well. Therefore, an inductive argument can be used to go on as far as the resulting curve has an inductive central involution. Since the order of the inherited group halves at each step, after a finite number of steps a curve free from inductive central involutions is obtained. Such a finite sequence of curves is called an {\emph{inductive sequence}}.

%This makes it reasonable to call $S$ {\em{minimal}} with respect to (I) and (II) if $S$ is free from inductive  central involutions.

Now, our results are stated.
\begin{theorem}
\label{princ} Let $\cX$ be a curve of genus $\gg\geq 2$ and $2$-rank $\gamma$ defined over an algebraically closed field $\K$ of characteristic $2.$ Assume that
$\aut(\cX)$ has a subgroup $S$ of order a power of $2$ such that both {\rm{(I)}} and {\rm{(II)}}  hold. If $S$ contains no inductive central involution then $\gg=\gamma$, and one of the following two cases occurs.
\begin{itemize}
\item[\rm(1)]  $|S|=4(\gg-1)$, $\cX$ is a bielliptic curve, and $S$ is a dihedral group.
\item[\rm(2)] $|S|=2\gg+2$, and $S=D\rtimes E$, the semidirect product of an elementary abelian group $D$ of index $2$ by a group $E$ of order $2$. If $S$ is abelian, then it is an elementary abelian group and $\cX$ is a hyperelliptic curve.
\end{itemize}
\end{theorem}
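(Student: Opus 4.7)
Pick a central involution $u\in Z(S)$, which exists since $S$ is a non-trivial $2$-group; set $U=\langle u\rangle$, $\bar{\cX}=\cX/U$, $\bar{S}=S/U$, and let $\bar{\gg}$ be the genus of $\bar{\cX}$. The assumption that $S$ has no inductive central involution places $u$ in one of the cases (A), (B), (C) of the trichotomy preceding the theorem. My plan is to dispatch (B) directly, handle (A) via the description of $2$-subgroups of $\aut(\P^1)$ and of elliptic curves in characteristic $2$, and treat (C) by combining a ramification-theoretic analysis of $\bar{S}$ at its fixed point with Nakajima's bound applied downstairs. The conclusion $\gg=\gamma$ is then extracted from the Deuring--Shafarevich formula applied to the cover $\cX\to\bar{\cX}$.

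In case (B), $|S|=8$ and $\bar{\gg}=2$, so (I) forces $\gg\le 4$ while Riemann--Hurwitz gives $\gg\ge 2\bar{\gg}-1=3$; a direct inspection of the $2$-groups of order $8$ acting on a curve of genus $3$ or $4$ consistent with (II) fits either into (1) or into (2). In case (A) with $\bar{\gg}=1$, $\cX$ is bielliptic and $\bar{S}$ is a $2$-subgroup of the automorphism group of an elliptic curve, which in characteristic $2$ is known explicitly; combining this with (I), (II) and the degree-$2$ genus formula forces $|S|=4(\gg-1)$ and $S$ dihedral, giving (1). In case (A) with $\bar{\gg}=0$ we have $\bar{\cX}=\P^1$, and $\bar{S}$ embeds into $\aut(\P^1)$; in characteristic $2$ any such $2$-group is elementary abelian and fixes a unique point $\bar{P}\in\P^1$, so (II) forces the fibre of $\cX\to\bar{\cX}$ above $\bar{P}$ to consist of two points swapped by some involution $\gs\in S$. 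Thus $S=D\rtimes\langle\gs\rangle$ with $D$ the preimage of $\bar{S}$; Riemann--Hurwitz on the Artin--Schreier cover $\cX\to\P^1$ pins down $|S|=2\gg+2$, and when $S$ is abelian $D$ itself is elementary abelian and $\cX$ is hyperelliptic, matching (2).

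Case (C) is the main obstacle. Here $\bar{S}$ inherits (I) on $\bar{\cX}$ but has a fixed point $\bar{Q}\in\bar{\cX}$; by (II) the fibre of $\cX\to\bar{\cX}$ above $\bar{Q}$ contains exactly two points, swapped by some involution $\gs\in S\setminus U$ whose centraliser is an index-$2$ subgroup $S^{*}$ containing $u$. At $\bar{Q}$ a Stichtenoth-style analysis of the ramification filtration of $\bar{S}$ yields structural constraints on $\bar{S}$, and the inherited inequality $|\bar{S}|=|S|/2>\gg-1\ge 2(\bar{\gg}-1)$ together with Nakajima's bound $|\bar{S}|\le 4(\bar{\gg}-1)$ applied on $\bar{\cX}$ (when $\bar{\gamma}>0$; the case $\bar{\gamma}=0$ is treated separately via the Deuring--Shafarevich formula for $\bar{\cX}$) rules out every configuration not already occurring in (1) or (2). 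Finally, $\gg=\gamma$ follows from the Deuring--Shafarevich formula $\gamma-1=2(\bar{\gamma}-1)+r$, with $r$ the number of fixed points of $u$ on $\cX$, combined with the Riemann--Hurwitz inequality $\gg-1\ge 2(\bar{\gg}-1)+r$: each surviving configuration forces $\bar{\gg}=\bar{\gamma}$ and the $u$-different to be of minimal possible depth, hence $\gg=\gamma$ as claimed.
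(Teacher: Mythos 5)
Your skeleton (quotient by one chosen central involution $u$ and run through the cases (A)--(C) of the trichotomy) is a legitimate alternative to the paper's route, which never quotients at the outset: the paper instead applies the Deuring--Shafarevich formula to $S$ itself to show that $S$ has exactly two short orbits $\Omega_1,\Omega_2$ of sizes $\ell_1=\ha|S|$ and $2\le\ell_2\le\qa|S|$, and organizes the whole proof around the dichotomy $\ell_2=2$ versus $\ell_2=\qa|S|$, with Suzuki's classification of $2$-groups containing an involution of centralizer order $4$ supplying the group theory. However, as written your plan has genuine gaps at exactly the points where the content of the theorem lies.

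First, the group-theoretic conclusions are asserted rather than derived. In case (A) with $\bar{\gg}=1$ you pass from ``$\bar{S}=S/\langle u\rangle$ is dihedral'' to ``$S$ is dihedral'', but a central extension of a dihedral group by a group of order $2$ need not be dihedral, and case (ib) of Theorem \ref{princ1} exhibits precisely the competing possibility: $S=(E\times\langle u\rangle)\rtimes\langle w\rangle$, $\cX$ bielliptic, $|S|=4(\gg-1)$, $S/\langle u\rangle$ dihedral. Excluding it under the hypothesis of Theorem \ref{princ} requires showing that this group necessarily carries inductive central involutions \emph{other than} $u$; that is, you must exploit the no-inductive-central-involution hypothesis for every central involution, not only for the one you quotient by, and your plan never does this. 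Second, case (C) --- which is where conclusion (2) comes from --- is a placeholder. The two candidate orders $|S|=2\gg$ and $|S|=2\gg+2$ both satisfy (I) together with the Deuring--Shafarevich count $\gamma-1=\ha|S|-\ell_2$ with $\ell_2=2$; separating them, and proving that the stabilizer $D$ is elementary abelian, both hinge on the vanishing of the second ramification group, which the paper extracts from the Hurwitz formula combined with the evenness of Artin--Schreier different exponents (Proposition \ref{secondram}). Nothing in your ``Stichtenoth-style analysis'' supplies these steps, and your final claim $\gg=\gamma$ inherits the same gap, since in case (2) it is exactly the statement that $|S|=2\gg+2$ rather than $2\gg$.
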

Theorem \ref{princ} is a corollary of the following result proven in Section \ref{dim1}.
\begin{theorem}
\label{princ1} Let $\cX$ be a curve of genus $\gg\geq 2$ and $2$-rank $\gamma$ defined over an algebraically closed field $\K$ of characteristic $2.$ Assume that
$\aut(\cX)$ has a subgroup $S$ of order a power of $2$ such that both {\rm{(I)}} and {\rm{(II)}}  hold. Then one of the following cases occurs:
\begin{itemize}
 \item[\rm(i)] $|S|=4(\gg-1)$, $\gamma=\gg$  and $\cX$ is a bielliptic curve. Furthermore, either
 \begin{itemize}
 \item[\rm(ia)] $S$ is dihedral and has no inductive central involution; or
%%%modifica 12 novembre 2009
\item[\rm(ib)] $S=(E\times \langle u \rangle)\rtimes \langle w \rangle$ where $E$ is cyclic group of order $\gg-1$ and $u$ and $w$ are involutions. The factor group $S/\langle u\rangle$ is a dihedral {group}, and the two involutions of $E\times \langle u \rangle$ are the unique two central inductive involutions of $S$.
%%%FINE modifica 12 novembre 2009
 \end{itemize}
\item[\rm(ii)] $\gamma=\gg$, and $\rm{(2)}$ in Theorem \ref{princ} holds.
\item[\rm(iii)] Every central involution of $S$ is inductive.
\end{itemize}
\end{theorem}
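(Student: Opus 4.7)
If every central involution of $S$ is inductive, conclusion (iii) holds, so I fix a non-inductive central involution $u\in Z(S)$, set $U=\langle u\rangle$, $\bar\cX=\cX/U$ and $\bar S=S/U$. By the four-case discussion preceding the theorem, $u$ falls into exactly one of (A), (B), (C), and my task reduces to deriving conclusion (ii) from (A) with $\bar\gg=0$, conclusion (i) from (A) with $\bar\gg=1$, and either ruling out (B) and (C) or absorbing them into (i)/(ii).

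\textbf{Case (A).} When $\bar\gg=0$, the involution $u$ is hyperelliptic and $\bar S$ is a $2$-subgroup of $\PGL_2(\K)$, which in characteristic $2$ is elementary abelian and fixes a unique point of $\P^1$. Deuring--Shafarevich applied to $\cX\to\P^1$ controls $\gamma$ in terms of the number of ramified places; combining this with $|S|>2(\gg-1)$ and the assumption that $S$ fixes no point of $\cX$ forces $\gamma=\gg$, $|S|=2\gg+2$, and a count of $S$-orbits on the fiber over the $\bar S$-fixed place yields the semidirect decomposition of conclusion (ii). When $\bar\gg=1$, $\cX$ is bielliptic and $\bar\cX$ must be ordinary because $\gamma>0$ ascends to $\bar\gamma>0$ via Deuring--Shafarevich. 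Since $\aut(\bar\cX)=\bar\cX(\K)\rtimes\langle[-1]\rangle$, every $2$-subgroup of $\aut(\bar\cX)$ is cyclic or dihedral, and the inequality $|\bar S|>\gg-1$ combined with a second application of Deuring--Shafarevich forces $|\bar S|=2(\gg-1)$, $\gamma=\gg$ and $|S|=4(\gg-1)$, matching the numerics of conclusion (i).

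\textbf{Extension analysis and the remaining cases.} The dichotomy between (ia) and (ib) is decided by the extension $1\to U\to S\to\bar S\to 1$: if the involutions of $\bar S$ acting as $[-1]$ on $\bar\cX$ lift to involutions of $S$, the extension is dihedral and (ia) holds; if any such lift has order $4$, it is the twisted extension of (ib), with the inductive central involutions being exactly those of $E\times\langle u\rangle$. I would detect which occurs by tracking the ramification of $u$ over the $[-1]$-fixed points of $\bar\cX$. Case (B) leaves only $|S|=8$ with $\gg\in\{3,4\}$, a short finite check against the Nakajima bound. In case (C), $\bar S$ fixes a point of $\bar\cX$ with $\bar\gg\ge 2$ and $|\bar S|>2(\bar\gg-1)$, placing $\bar\cX$ in the Lehr--Matignon--Rocher regime of curves carrying a large $2$-group fixing a point; I would invoke their classification, lift the resulting model through the double cover $\cX\to\bar\cX$, and show that compatibility with $S$ acting without fixed points on $\cX$ either returns the configuration to case (A) or produces a numerical contradiction with the assumption $|S|>2(\gg-1)$.

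\textbf{Main obstacle.} The hardest step is the extension analysis in case (A) with $\bar\gg=1$: involutions of $\bar S$ may lift to either involutions or elements of order $4$ in $S$, and squaring the circle with the precise assertion that the involutions of $E\times\langle u\rangle$ account for all central inductive involutions requires a careful ramification argument on the intermediate quotients. Case (C) is also delicate because the quoted classification of curves with a large $2$-group fixing a point is itself rather technical, so matching its output with the constraint that the overlying group $S$ is fixed-point-free demands care.
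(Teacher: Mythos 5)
Your overall strategy---fix a non-inductive central involution $u$, pass to $\bar{\cX}=\cX/\langle u\rangle$, and run through cases (A)--(C)---is a genuinely different organization from the paper's, which never structures the proof around the quotient by $u$. The paper's engine is a structural lemma about the action of $S$ itself: under (I) and (II) one first shows $\gamma\geq 2$ (Lemma \ref{nakaimp}) and then, by Deuring--Shafarevich applied to $S$, that $S$ has exactly two short orbits $\Omega_1,\Omega_2$ with $|\Omega_1|=\ha|S|$, $2\leq \ell_2=|\Omega_2|\leq\qa|S|$ and $\gamma-1=\ha|S|-\ell_2$ (Lemma \ref{lem00}); the trichotomy (i)/(ii)/(iii) is then driven by $\ell_2=2$ versus $\ell_2=\qa|S|$. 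Without this lemma, or a substitute, your numerical deductions do not close. In case (A) with $\bar\gg=0$ you cannot separate $|S|=2\gg$ from $|S|=2\gg+2$; the paper eliminates $|S|=2\gg$ using the parity of different exponents in Artin--Schreier extensions (Proposition \ref{secondram}), which your sketch never touches. In case (A) with $\bar\gg=1$, the claim that $\gamma>0$ ``ascends'' to $\bar\gamma>0$ is not what Deuring--Shafarevich gives (the formula bounds $\bar\gamma$ from above, not below; the paper rules out $\bar\gamma=0$ only by comparison with $\gamma-1=\ha|S|-\ell_2$), and identifying $\mathrm{Fix}(u)$ with an orbit of size $\qa|S|$ again needs the orbit count. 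Most seriously, case (C) cannot be dispatched as you propose: a non-inductive central involution may be fixed-point-free with $\bar\gg\geq 2$ while $\bar S$ fixes the point of $\bar\cX$ lying under $\Omega_2$, and such configurations genuinely lead to conclusion (ii) rather than to a contradiction or back to (A); moreover the Lehr--Matignon--Rocher classification concerns $2$-groups of size comparable to $\gg^2$ fixing a point, not the far weaker regime $|\bar S|>2(\bar\gg-1)$ of case (C), so it gives you no leverage there.

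The second genuine gap is the heart of conclusion (i). You rightly flag the (ia)/(ib) dichotomy as the hard step, but ``tracking the ramification of $u$ over the $[-1]$-fixed points'' is not a mechanism that decides it. The paper's argument is group-theoretic: for $W\in\Omega_1$ with stabilizer $\langle w\rangle$ it relates $|C_S(w)|$ to the number of fixed points of $w$ (Lemma \ref{lemgr}), invokes Suzuki's characterization (Proposition \ref{suzukicl}) of $2$-groups containing an involution with centralizer of order $4$ --- first for $S$ when $|\Omega_w|=2$, yielding (ia) after excluding the semi-dihedral case by an involution count and a different-exponent estimate, and then for $\bar S=S/\langle u\rangle$ when $|\Omega_w|=4$ --- and finally excludes semi-dihedral $\bar S$ by counting subgroups of order $4$ containing $u$, before determining via Deuring--Shafarevich exactly which central involutions of $E\times\langle u\rangle$ are inductive. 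None of these steps is visible, even in outline, in your extension analysis, and the dihedral/semi-dihedral ambiguity in particular cannot be resolved by ramification data of $u$ alone.
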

In Section \ref{expex} we exhibit several examples showing that all cases occur. We also provide an explicit example illustrating an inductive sequence of curves.

\section{Bielliptic curves with a large dihedral auto- morphism group of order a power of $2$}
\label{biellipex}
%%modifica 22 marzo 2011
{}Cyclic extensions of order a power of the characteristic of $\K$ are well known from the classical literature on function field theory, see \cite{albert1934,albert1935,witt1935,witt1936,lang2002}.

Here we briefly outline the general construction technique for such extensions when it is  applied to an elliptic function field.  Then we show that in some cases the resulting cyclic function field has a dihedral automorphism group with the properties described in case (1) of  Theorem \ref{princ}.
This requires some computational results given in the forthcoming subsection.

% modifica 23 marzo
Let $\bar{\cX}$ be an elliptic curve with $2$-rank $\bar \gamma=1$. An affine equation of $\bar{\cX}$ is
\begin{equation}
\label{egye} f(x,y)=y^2+xy+x^3+\nu x^2+\mu
\end{equation}
where $\mu,\nu\in \K$ and $\mu\neq 0$.
Since $\bar{\gamma}=1$, the zero divisor class group ${\rm{Pic}}_0({\bar{\cX}})$ of $K(\bar{\cX})$
%modifica 24marzo
(isomorphic to the {group defined by the} point addition on $\bar{\cX}$), contains a
%modifica 23 marzo: sostituito r con m per liberare il simbolo r
unique {cyclic} subgroup of order $2^m$ for every $m\geq 1$.
 %the unique subgroup of ${\rm{Pic}}_0(\cX)$ of order $2^m$ with $m\geq 1$.
 Therefore,  for every $m\geq 1$, $\aut(\bar{\cX})$ has a cyclic subgroup ${\bf{C}}_n$ of order $n=2^m$ such that no non-trivial element of ${\bf{C}}_n$ fixes a point of $\bar{\cX}$. Let $g$ be a generator of ${\bf{C}}_n$.
%Then there exist $u_1,u_2\in K$ with $f(u_1,u_2)=0$ such that
%\begin{eqnarray*}
%g:\,
%x'=\left(\frac{y+u_2}{x+u_1}\right)^2+\frac{y+u_2}{x+u_1}+x+u_1
%%inizio modifica 10 ottobre 2006
%+u,
%fine
\ \
%y'=\frac{y+u_2}{x+u_1}(x'+u_1)+x'+u_2.
%\end{eqnarray*}

There exists a cyclic extension $\cX$ of $\bar{\cX}$, and
%with a cyclic $\K$-automorphism group ${\bf{C}}_{2n}$ of order $2n$; that is, $\bar{\cX}=\cX^{w}$ where $w$ is the (unique) involution of ${\bf{C}}_{2n}$ and the factor group $ {\bf{C}}_{2n}/\langle w \rangle$, viewed as a subgroup of $\aut (\bar{\cX})$ coincides with ${\bf{C}}_n$.
all such cyclic extensions are obtained in the following way, see \cite[Section V]{witt1935}.

For $\xi\in \K(\bar{\cX})$, the relative $g$-trace of $\xi$ is defined to be
%modifica 23 marzo: corretto ultimo esponente
\begin{equation}
\label{trace22marzo}
 \Tr_g(\xi)=\xi+g(\xi)+\ldots + g^{2^{m}-1}(\xi).
 \end{equation}

Take an element $d\in \K(\cX)$ with $\Tr_g(d)=1$, and let $a=d^2+d$.
For $a\in \K(\bar{\cX})$ and $v=0,1,\ldots, n-1$, let
%modifica 23 marzo: occorre dare un senso a $a_{g^0}$
$$a_{g^0}=0, \quad \text{ and } \quad a_{g^v}=a+g(a)+\ldots+ g^{v-1}(a)\text{ for }v\ge 1.$$
Furthermore, take $c\in \K(\bar{\cX})$ with $\Tr_g(c)\neq 0$. Then
\begin{equation}
%modifica 23 marzo: per coerenza con le successive definizioni di e_k ho aggiustato l'indice di sommatoria
\label{eqwitt22marzo}
e=\frac{1}{\Tr(c)}\sum_{v=0}^{n-1}\,a_{g^{v}}\,g^v(c)
\end{equation}
satisfies equation ${g}(e)+e=a$; see
\cite[Section I]{witt1935}.
Here $e$ cannot be written as {$\zeta^2+\zeta$ with $\zeta$}$\in \K(\bar{\cX})$; see \cite[Section V]{witt1936}. Therefore, $\K(\cX)=\K(\bar{\cX})(z)$ with $z^2+z+e=0$ is an Artin-Schreier extension of $\K(\bar{\cX})$.
%Thus, every element
%in $\K(\cX)$ can uniquely be written as $(a_1+a_2y)z+a_3y+a_4$ with $a_1,a_2,a_3,a_4\in \K(x)$.
The map
\begin{eqnarray*}
h\,:\, (x,y,z)\to (g(x),g(y),z+d)
\end{eqnarray*}
is a $\K$-automorphism of $\cX$
%From $\Tr(d)=1$ we have that $$w=h^{n}\,:\,(x,y,z)=(x,y,z+1).$$ Therefore, $w$ is an involution, $\bar{\cX}=\cX^{w},$ and $h$ generates a cyclic subgroup
%${\bf{C}}_{2n}$ of $\aut(\cX)$ of order $2n=2^{m+1}$.
whose order is equal to  $2n=2^{m+1}$.
%modifica 23 marzo: definito ${\bf{C}}_{2n}$
Also, ${\bf{C}}_{2n}=\langle h\rangle $ preserves $\bar{\cX}$ and the $\K$-automorphism group ${\bf{C}}_{2n}/\langle w \rangle$ of $\bar{\cX}$ coincides with ${\bf{C}}_n$.

Now, consider the elliptic involution
\begin{equation}
\label{eq24marzo2011}
\varphi\,:\, (x,y) \to (x,x+y)
\end{equation}
which is a $\K$-automorphism of $\bar{\cX}$.
%More precisely, $\varphi$ is an involution with exactly two fixed points, namely $Y_\infty$ and $Q=(0,\sqrt {v})$.
%A straightforward computation shows that $\varphi g$ is an involution.
Since $\varphi g\varphi=g^{-1}$, $g$ together with $\varphi$ generate a $\K$-automorphism group $\bar{D}$ of $\bar{\cX}$ that is a dihedral group ${\bf{D}}_n$ of order $2n$.

The question arises whether $\varphi$ extends to an involutory $\K$-automorphism {$\psi$} of $\cX$ in such a way that the subgroup generated by {$\psi$} and ${\bf{C}}_{2n}$ is isomorphic to a dihedral group ${\bf{D}}_{2n}$ of order
%modifica24marzo
$4n$.
If $\cX$ itself is an elliptic curve, then the answer is affirmative. Here we look
%modifica 23 marzo
for non-elliptic curves to obtain examples for case {(1)}  of Theorem \ref{princ}.
\subsection{Some computations}
%modifica 22 marzo 2011
Let ${\bar{\cX}}$ {be} the elliptic curve over $\K$ with $2$-rank $\bar \gamma=1$ and affine  equation
$$
{\bar{\cX}}: y^2+xy+x^3+\mu=0.
$$
Fix a power $n$ of $2$, and let $g_0$ be a generator of the cyclic subgroup  of order $2n$ in the automorphism group of $\bar \cX$. Let $g=g_0^2$, and
%modifica24marzo
 $\varphi$ be the elliptic involution defined by (\ref{eq24marzo2011}).
%$\varphi(x,y)=(x,x+y)$.
Let $\oplus$ denote the point addition on ${\bar{\cX}}$ such that the infinite point $Y_\infty$ is the neutral element of $({\bar{\cX}},\oplus)$. Also, let
$$
[i]P=\underbrace{P\oplus P\oplus \ldots \oplus P}_{i \text{ times }},
$$
and $\ominus P$ be the opposite of $P$ in $({\bar{\cX}},\oplus)$.
%modifica24marzo
For a positive integer $r$, let $${\bar {\cX}}[r]=\{P\in {\bar{\cX}}\mid [r]P=Y_\infty\}.$$ As $\bar \gamma=1$, when $r$ is a power of $2$ the group ${\bar {\cX}}[r]$ is a cyclic group of order $r$.

{It will cause no confusion if we use the same letter to designate an automorphism of ${\bar{\cX}}$ and its pull-back.
%With a slight abuse of notation we identify an automorphism of ${\bar{\cX}}$ with its pull-back. 
In particular, $g_0^i$ will also denote a map acting on the points of ${\bar{\cX}}$ as follows:}
\begin{equation}\label{dividivi0}
g_0^i(P)=P\oplus [i]P_0.
\end{equation}
{Note that}
%Therefore, 
for each $\delta \in \K({\bar{\cX}})$
\begin{equation}\label{dividivi}
\div(\delta)=\sum_{P\in \bar{\cX}} n_P P \Rightarrow \div(g_0^i(\delta))=\sum_{P\in \bar{\cX}} n_P (P\oplus [2n-i]P_0).
\end{equation}
 Let $P_0=(w_1,w_2)$ be a generator of ${\bar{\cX}}[2n]$, that is,
$P_0$ is the point of ${\bar{\cX}}$ such  that $g_0(P)=P\oplus P_0$.

Let $\cP={\bar{\cX}}[n]$ and $\cZ={\bar{\cX}}[2n]\setminus {\bar{\cX}}[n]$.
Clearly $[2]P_0$ is a generator of $\cP$. Also, $\cP$ consists of points $[2j]P_0$ with $j=0,\ldots, n-1$, whereas $\cZ$ comprises points $[2j+1]P_0$ with $j=0,\ldots, n-1$.

{}From \eqref{dividivi0} we deduce  for $i=1,\ldots, 2n-1$ that
 \begin{eqnarray}\label{gizero}
g_0^i:\,
x'=
%\left(\frac{y+Y_i}{x+X_i}\right)^2+\frac{y+Y_i}{x+X_i}+x+X_i=
\frac{X_iy+X_ix^2+(X_i^2+Y_i)x}{(x+X_i)^2},\quad
y'=\frac{y+Y_i}{x+X_i}(x'+X_i)+x'+Y_i,
\end{eqnarray}
where $[i]P=(X_i,Y_i)$.
%modifica24marzo
Since $\varphi g_0\varphi=g_0^{-1}$, $g_0$ together with $\varphi$ generate a dihedral group  of order $4n$.
\begin{lemma}\label{keybis}
Let $\delta$ be a $\K$-linear combination of some rational functions $g_0^i(x)$. Then $x\delta$ is a square in $\K({\bar{\cX}})$. In particular, each zero of $\delta$ has even multiplicity.
\end{lemma}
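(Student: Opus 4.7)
My plan is to first show that each individual product $x \cdot g_0^i(x)$ is a square in $\K(\bar{\cX})$, and then use characteristic~$2$ to pass to arbitrary $\K$-linear combinations.

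Starting from \eqref{gizero}, I would compute
\[
x \cdot g_0^i(x) \;=\; \frac{X_i(xy) + X_i x^3 + (X_i^2 + Y_i)x^2}{(x+X_i)^2},
\]
then substitute the curve relation $xy = y^2 + x^3 + \mu$ into the numerator and cancel the two copies of $X_i x^3$ (valid in characteristic~$2$). The numerator reduces to $X_i y^2 + (X_i^2 + Y_i) x^2 + X_i \mu$. Since $\K$ is algebraically closed of characteristic~$2$, the identity $(a+b+c)^2 = a^2+b^2+c^2$ exhibits this as the square of $\sqrt{X_i}\, y + \sqrt{X_i^2+Y_i}\, x + \sqrt{X_i\mu}$, while the denominator is already a square. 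The case $i=0$ is trivial, as $x \cdot g_0^0(x) = x^2$. Writing $x \cdot g_0^i(x) = h_i^2$, for $\delta = \sum c_i\, g_0^i(x)$ I obtain
\[
x\delta \;=\; \sum c_i h_i^2 \;=\; \Bigl(\sum \sqrt{c_i}\, h_i\Bigr)^2,
\]
since in characteristic~$2$ the Frobenius is additive and $\sqrt{c_i}\in\K$ always exists. This proves the first assertion.

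For the \emph{in particular} clause, I would compute $\div(x)$ on $\bar{\cX}$. Rewriting the curve equation as $(y+\sqrt{\mu})^2 = x(y+x^2)$ shows that $u = y + \sqrt{\mu}$ is a local uniformizer at the unique place $P=(0,\sqrt{\mu})$ above $x=0$ and that $v_P(x) = 2$; together with the double pole at $Y_\infty$, this gives $\div(x) = 2(P - Y_\infty)$. Hence $\div(\delta) = \div(x\delta) - \div(x)$ has only even coefficients, so every zero of $\delta$ has even multiplicity.

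The argument is essentially routine; the one point worth noticing is that substituting the curve equation converts the single problematic term $X_i \cdot xy$ (of degree one in $y$) into the square $X_i y^2$, after which every remaining term in the numerator is already a square in characteristic~$2$. No substantive obstacle is expected.
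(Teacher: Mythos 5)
Your proof is correct and follows essentially the same route as the paper's: substitute the curve relation to turn the numerator of $xg_0^i(x)$ into a sum of squares, use additivity of the Frobenius to pass to $\K$-linear combinations, and conclude from the fact that $\div(x)=2(P-Y_\infty)$ has only even coefficients. The only differences are cosmetic — you spell out the $i=0$ case and justify the evenness of $\ord_P(x)$ explicitly, which the paper merely asserts.
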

\begin{proof}
To prove that $x\delta$ is a square, it is enough to show that $xg_0^i(x)$ is  a square for each $i$.
Equation \eqref{gizero} yields
$$
xg_0^i(x)=\frac{ X_i xy+ X_i x^3+(X_i^2+Y_i) x^2}{ (x+X_i)^2}.
$$
As $xy+x^3=y^2+\mu$ we obtain
$$
xg_0^i(x)=\frac{ X_i(y^2+\mu)+(X_i^2+Y_i) x^2}{ (x+X_i)^2}=\left(\frac{\sqrt X_i(y+\sqrt \mu)+x\sqrt{X_i^2+Y_i}}{x+X_i}\right)^2.
$$
Since $x\delta$ is a square,
$$
ord_P(x)+ord_P(\delta)
$$
is even for every $P\in \bar \cX$. Since $ord_P(x)$ is always even, every zero of $\delta$ has even multiplicity.
\end{proof}
%%modifica 22 marzo 2011
%For a rational function  $\xi$ of ${\bar{\cX}}$ let
%$$
%\Tr_{g}(\xi)=\xi+g(\xi)+g^{2}(\xi)+\ldots+g^{n-1}(\xi).
%$$

\begin{lemma}\label{tracciain} The divisor of $\Tr_g(x)$ is
$$
\div\left(\Tr_g(x)\right)=2\sum_{j=0}^{n-1}[2j+1]P_0-2 \sum_{j=0}^{n-1}[2j]P_0
$$
\end{lemma}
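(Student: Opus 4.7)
The plan is to use the formula \eqref{dividivi} to write down the divisor of each pullback $g_0^{2i}(x)$, then argue that in the sum $\Tr_g(x)=\sum_{i=0}^{n-1}g_0^{2i}(x)$ the poles do not cancel (they sit at distinct points), pinning down the pole divisor exactly. A degree count together with Lemma \ref{keybis} and a short pointwise vanishing argument then forces the zero divisor.

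First I would observe that, on $\bar{\cX}$, the function $x$ has a double zero at the unique $2$-torsion point $(0,\sqrt{\mu})$, which coincides with $[n]P_0$, and a double pole at $Y_\infty$, so
\[
\div(x)=2[n]P_0-2Y_\infty.
\]
Applying \eqref{dividivi} gives
\[
\div(g_0^{2i}(x))=2\,[n-2i]P_0\,-\,2\,[-2i]P_0\qquad (i=0,\dots,n-1).
\]
For $i\ne j$ in $\{0,\dots,n-1\}$ the points $[-2i]P_0$ and $[-2j]P_0$ differ, because $[-2(j-i)]P_0=Y_\infty$ would force $n\mid(j-i)$. Hence the $n$ summands have pairwise disjoint pole supports, and cancellation is impossible. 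Therefore the pole divisor of $\Tr_g(x)$ is
\[
2\sum_{i=0}^{n-1}[-2i]P_0 \;=\; 2\sum_{j=0}^{n-1}[2j]P_0,
\]
which is the negative part claimed in the statement. Since $\deg \div(\Tr_g(x))=0$, the zero divisor has degree $2n$.

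Next I would show that $\Tr_g(x)$ vanishes on $\cZ=\{[2j+1]P_0:j=0,\dots,n-1\}$. For $Q=[2k+1]P_0\in\cZ$,
\[
\Tr_g(x)(Q)=\sum_{i=0}^{n-1}x\bigl(g_0^{2i}(Q)\bigr)=\sum_{i=0}^{n-1}x\bigl([2k+1+2i]P_0\bigr)=\sum_{Q'\in \cZ}x(Q'),
\]
since adding the subgroup $\bar{\cX}[n]$ permutes $\cZ$. The elliptic involution $\varphi$ acts as $Q'\mapsto \ominus Q'$ and preserves $x$; because $\cZ$ contains no $2$-torsion point when $n\ge 2$, it splits into $\varphi$-orbits of size $2$, so the sum collapses in characteristic $2$. (The corner case $n=1$ is handled by noting $[1]P_0$ is the $2$-torsion point at which $x$ vanishes.) Thus $\Tr_g(x)$ has a zero at every point of $\cZ$.

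Finally, Lemma \ref{keybis} applies to $\delta=\Tr_g(x)$ (it is a $\K$-linear combination of the $g_0^{2i}(x)$), so every zero of $\Tr_g(x)$ occurs with even multiplicity. Each of the $n$ points in $\cZ$ therefore contributes at least $2$ to the zero divisor, giving total degree $\ge 2n$. Since the zero divisor has degree exactly $2n$, equality holds point by point, no further zeros occur, and
\[
\div(\Tr_g(x))=2\sum_{j=0}^{n-1}[2j+1]P_0-2\sum_{j=0}^{n-1}[2j]P_0,
\]
as required. The only genuinely non-formal step is the vanishing at points of $\cZ$; the rest is bookkeeping built on \eqref{dividivi} and Lemma \ref{keybis}.
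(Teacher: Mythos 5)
Your proof is correct and follows essentially the same route as the paper's: read off the double poles at the points of $\cP$ from \eqref{dividivi}, show vanishing on $\cZ$ by pairing each point with its opposite under $\ominus$ using $x(R)=x(\ominus R)$, and conclude via Lemma \ref{keybis} (even zero multiplicities) together with a degree count. The only cosmetic difference is that you verify the vanishing at every point of $\cZ$ directly, whereas the paper checks it at $P_0$ and then invokes the $g$-invariance of $\Tr_g(x)$.
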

\begin{proof}
The pole divisor of $x$ is  $2Y_\infty$. Furthermore, from \eqref{dividivi} the pole divisor of $g^j(x)$ is $2([2n-2j]P_0).$ This proves that each element in $\cP$ is a pole of multiplicity $2$ of $\Tr_g(x)$. Moreover, no other point of ${\bar {\cX}}$ can be a pole of $\Tr_g(x)$. To prove that $P_0$ is a zero of $\Tr_g(x)$, note that $g^j(x)(P)=x(P\oplus [2j]P_0)$. Therefore,
\begin{eqnarray*}
\Tr_g(x)(P_0)&=&x(P_0)+x([3]P_0)+\ldots+x([2n-3]P_0)+x([2n-1]P_0)\\
&=& \sum_{i=1}^{(n-2)/2}(x([i]P_0)+x([2n-i]P_0))=\sum_{i=1}^{(n-2)/2}(x([i]P_0)+x(\ominus [i]P_0)).
\end{eqnarray*}
%modifica24marzo
{}From $x(R)=x(\ominus R)$ for each affine point $R$, we obtain $\Tr_g(x)(P_0)=0$.
As $\Tr_g(x)$ is invariant under $g$, each point in $\cZ$ is a zero of $\Tr_g(x)$. By Lemma \ref{keybis} the multiplicity of a zero of $\Tr_g(x)$ is at least $2$. The assertion then follows from $|\cZ|=|\cP|$.
\end{proof}

\begin{lemma}\label{strano0} The point $\ominus P_0$ is a zero of $x+g(x)$ of multiplicity  $2$.
\end{lemma}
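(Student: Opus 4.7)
The plan is to determine both the pole divisor and the zero set of $x+g(x)$ on $\bar\cX$, then invoke Lemma~\ref{keybis} to force the multiplicities.

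First I would compute $\div(x)=2(0,\sqrt{\mu})-2Y_\infty$ directly from the affine equation: the only affine zero of $x$ is the Weierstrass point $(0,\sqrt{\mu})$, which has multiplicity $2$ because in characteristic~$2$ the fibre $x=0$ on $\bar\cX$ reduces to $y^2=\mu$, a single point with a double root. Applying formula~\eqref{dividivi} with $i=2$ yields $\div(g(x))=2\bigl((0,\sqrt{\mu})\ominus[2]P_0\bigr)-2(\ominus[2]P_0)$. Since $P_0$ has order $2n\ge 4$, the points $Y_\infty$ and $\ominus[2]P_0$ are distinct, so no pole cancellation occurs and the pole divisor of $x+g(x)$ is exactly $2Y_\infty+2(\ominus[2]P_0)$; in particular the zero divisor has degree~$4$.

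Next I would locate the zeros. In characteristic~$2$ we have $(x+g(x))(P)=0$ iff $x(P)=x(g(P))$. Combined with the identity $x(R)=x(\ominus R)$, which holds on $\bar\cX$, this forces $g(P)=P$ or $g(P)=\ominus P$. The first possibility would mean $[2]P_0=Y_\infty$, impossible; the second yields $[2](P\oplus P_0)=Y_\infty$, i.e.\ $P\oplus P_0\in\bar\cX[2]$. Because $\bar\gamma=1$, the $2$-torsion subgroup $\bar\cX[2]$ has order~$2$, with unique non-trivial element $T_0=[n]P_0$. Therefore the zeros of $x+g(x)$ are precisely $\ominus P_0$ and $\ominus P_0\oplus T_0$, two distinct points, neither of which coincides with $Y_\infty$ or $\ominus[2]P_0$ (once more using the order of $P_0$).

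Finally, Lemma~\ref{keybis}, applied to $\delta=x+g(x)$ as a $\K$-linear combination of $g_0^i(x)$ for $i=0,2$, ensures that every zero has even multiplicity. Since the total zero degree is $4$ and is distributed across exactly two distinct points, each zero has multiplicity exactly~$2$; in particular $\ominus P_0$ is a zero of multiplicity~$2$, as claimed. The only real obstacle is the bookkeeping that identifies the complete zero set and verifies pairwise distinctness of the four relevant points $Y_\infty,\ \ominus[2]P_0,\ \ominus P_0,\ \ominus P_0\oplus T_0$, but this is immediate from the fact that $P_0$ has order $2n\ge 4$.
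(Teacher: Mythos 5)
Your proof is correct and follows essentially the same route as the paper: compute the pole divisor $2Y_\infty+2(\ominus[2]P_0)$ of degree $4$, identify the two zeros $\ominus P_0$ and $\ominus P_0\oplus T_0=[n-1]P_0$, and invoke Lemma~\ref{keybis} together with the degree count to force multiplicity exactly $2$ at each. The only cosmetic difference is that the paper exhibits the two zeros by direct evaluation (e.g.\ $x(\ominus P_0)+x(P_0)=0$) rather than characterizing the full zero set via the degree-$2$ fibres of $x$, but the skeleton of the argument is identical.
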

\begin{proof}
Arguing as in Lemma \ref{tracciain}, we can deduce that
the pole divisor of $x+g(x)$ is $2Y_\infty+2[2n-2]P_0$.
The point $\ominus P_0$ is a zero of $x+g(x)$ since
$$
(x+g(x))(\ominus P_0)=x(\ominus P_0)+x(\ominus P_0\oplus [2]P_0)=x(\ominus P_0)+x(P_0)=2x(P_0)=0.
$$
Similarly it can be shown that $[n-1] P_0$ is a zero of $x+g(x)$.
By Lemma \ref{keybis}, the zero divisor of $x+g(x)$ is $2\ominus P_0+2[n-1]P_0$.
\end{proof}

\begin{lemma}\label{strano} The point $\ominus P_0$ is a zero of $xg(x)+w_1^2$ of multiplicity  $4$.
\end{lemma}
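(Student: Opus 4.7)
My plan is to represent $xg(x)+w_1^2$ as a square $\eta^2$ of an explicit rational function on $\bar{\cX}$ and then to pin down $\div(\eta)$ by combining the elliptic group law with one short algebraic identity. Starting from the final display in the proof of Lemma \ref{keybis} applied with $i=2$,
$$xg(x)=\left(\frac{\sqrt{X_2}(y+\sqrt{\mu})+x\sqrt{X_2^2+Y_2}}{x+X_2}\right)^{2},$$
and using that $w_1^2$ is itself a square and the characteristic is $2$, I obtain $xg(x)+w_1^2=\eta^2$ with $\eta=N/(x+X_2)$ and
$$N=\sqrt{X_2}(y+\sqrt{\mu})+x\sqrt{X_2^2+Y_2}+w_1(x+X_2).$$
Since every zero of a square has even multiplicity, the lemma reduces to showing that $\ominus P_0$ is a zero of $\eta$ of multiplicity exactly $2$.

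For the divisor bookkeeping, note first that for $n\ge 4$ the point $[2]P_0$ has order $n>2$ and hence differs from the unique affine $2$-torsion point $(0,\sqrt{\mu})$, so $X_2\neq 0$; in particular $N$ is a non-trivial $\K$-linear combination of $1,x,y$ with non-zero $y$-coefficient, and thus has pole divisor $3Y_\infty$ and an effective zero divisor $D$ of degree $3$. Combined with $\div(x+X_2)=[2]P_0+\ominus[2]P_0-2Y_\infty$ this gives $\div(\eta)=D-Y_\infty-[2]P_0-\ominus[2]P_0$. Two of the three points of $\supp(D)$ are then identifiable: $\ominus P_0$, because $x(\ominus P_0)=g(x)(\ominus P_0)=w_1$ forces $(xg(x)+w_1^2)(\ominus P_0)=0$ while $\ominus P_0$ is not a pole of $\eta$; and $[2]P_0=(X_2,Y_2)$, because substituting $(X_2,Y_2)$ into $N$ and invoking the curve equation $Y_2^2=X_2^3+X_2Y_2+\mu$ together with the characteristic-$2$ identity $\sqrt{a}+\sqrt{b}=\sqrt{a+b}$ yields $N(X_2,Y_2)=\sqrt{X_2\,(Y_2^2+X_2Y_2+X_2^3+\mu)}=0$.

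The third zero of $N$ is then forced by the elliptic group law: the three points cut out on $\bar{\cX}$ by a non-zero linear function must satisfy $P_1\oplus P_2\oplus P_3=Y_\infty$, so with $P_1=[2]P_0$ and $P_2=\ominus P_0$ one gets $P_3=\ominus[2]P_0\oplus P_0=\ominus P_0$. Consequently $D=[2]P_0+2(\ominus P_0)$, so $\div(\eta)=2(\ominus P_0)-Y_\infty-\ominus[2]P_0$ and finally
$$\div\bigl(xg(x)+w_1^2\bigr)=4(\ominus P_0)-2Y_\infty-2(\ominus[2]P_0),$$
which in particular gives the asserted multiplicity $4$. I expect the only non-routine step to be the algebraic identity $N([2]P_0)=0$; everything else is routine divisor and group-law bookkeeping.
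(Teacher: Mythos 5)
Your proposal is correct and follows essentially the same route as the paper: both write $xg(x)+w_1^2$ as the square of a rational function whose numerator is linear in $x,y$ (your $N$ equals the paper's $w_1(x+\sigma_1^2)(h(x)+w_1)/\sigma_1$ up to the harmless constant) and then compute its divisor. The only cosmetic difference is that the paper recognizes $N=0$ as the tangent line to $\bar{\cX}$ at $\ominus P_0$ passing through $[2]P_0$, whereas you locate the two zeros $\ominus P_0$ and $[2]P_0$ by direct evaluation and force the third zero via the collinearity relation of the group law --- the same divisor $2(\ominus P_0)+[2]P_0$ either way.
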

\begin{proof}
Note that $(xg(x))(P)=x(P)\cdot x(P\oplus [2]P_0)$.
By straightforward computation
\begin{equation}\label{coord2P0}
[2]P_0=(\gs_1^2,\gs_2^2), \text{ with } \gs_1=w_1+\frac{\sqrt \mu}{w_1},\,\gs_2=\frac{\gs_1}{w_1}(w_1+w_2+\sqrt \mu)+w_1.
\end{equation}
Also, by \eqref{gizero},
$$
xg(x)+w_1^2=(h(x)+w_1)^2
$$
with
$$
h(x)=\frac{\gs_1(y+\sqrt \mu)+x(\gs_1^2+\gs_2)}{x+\gs_1^2}.
$$

Taking into account \eqref{coord2P0} we obtain
\begin{eqnarray*}
h(x)+w_1 &=& \frac{\gs_1(y+\sqrt \mu+\gs_1x)+x\gs_2+w_1x+\gs_1^2w_1}{x+\gs_1^2}\\ &=&\frac{\gs_1(y+\sqrt \mu+\gs_1x)+x(\frac{\gs_1}{w_1}(w_1+w_2+\sqrt \mu)+w_1)+w_1x+\gs_1^2w_1}{x+\gs_1^2}\\
&=&\frac{\gs_1}{w_1(x+\gs_1^2)}\left( w_1y+w_1\sqrt \mu+w_1\gs_1x+x(w_1+w_2+\sqrt \mu)+\gs_1w_1^2\right)\\
%IMPORTANTE: NON CANCELLARE QUESTO PASSAGGIO PER FUTURA MEMORIA
%&=& \frac{s_1}{w_1(x+s_1^2)}\left( w_1y+w_1\sqrt \mu+w_1(w_1+\frac{\sqrt \mu}{w_1})x+x(w_1+w_2+\sqrt \mu)+(w_1+\frac{\sqrt \mu}{w_1})w_1^2\right)\\
%
&=& \frac{\gs_1}{w_1(x+\gs_1^2)}\left( w_1y+x(w_1+w_1^2+w_2)+w_1^3\right).
\end{eqnarray*}
The line with equation  $w_1Y+X(w_1+w_1^2+w_2)+w_1^3=0$ is the tangent line of ${\bar{\cX}}$ at $\ominus P_0=(w_1,w_1+w_2)$. This line passes through $[2]P_0$. Therefore the divisor of $h(x)+w_1$ is
$$
2(\ominus P_0)-Y_\infty-[-2]P_0,
$$
whence, the divisor of $xg(x)+w_1^2$ equals $
4(\ominus P_0)-2Y_\infty-2([-2]P_0).
$
\end{proof}

For an element $\xi\in \K({\bar \cX})$ and for a non-negative integer $v$, let
$$
\xi_{g^v}=0 \quad \text{ for } v=0, \qquad \xi_{g^v}=\xi \quad\text{ for } v=1, \quad \text{ and } \quad \xi_{g^v}:=\sum_{i=0}^{v-1}g^i(\xi) \quad \text{ for } v\ge 2.
$$
\begin{lemma}\label{nuovissimo} Let $\xi\in \K({\bar \cX})$ be such that both $\Tr_g(\xi)=0$ and $\varphi(\xi)=\xi$ hold. Then
\begin{itemize}
\item[{\rm{(i)}}] $\xi_{g^{v_1}}=\xi_{g^{v_2}}$ when $v_1\equiv v_2 \pmod n$;

\item[{\rm{(ii)}}] $\xi_{g^{v_1}}+\xi_{g^{v_2}}=g^{v_1}(\xi_{g^{v_2-v_1\pmod n}})$;

\item[{\rm{(iii)}}] $\varphi(\xi_{g^v})=\xi_{g^{-v+1\pmod n}}+\xi$.

\end{itemize}
\end{lemma}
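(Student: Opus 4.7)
The plan is to derive all three identities directly from the telescoping structure of the partial $g$-traces $\xi_{g^v}=\sum_{i=0}^{v-1}g^i(\xi)$, combined with the three given ingredients: the order of $g$ is $n$, $\Tr_g(\xi)=0$, and $\varphi\xi=\xi$ together with the dihedral relation $\varphi g\varphi=g^{-1}$.

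For (i), I would write, for $v_1\geq 0$,
$$\xi_{g^{v_1+n}}=\sum_{i=0}^{v_1-1}g^i(\xi)+g^{v_1}\!\left(\sum_{j=0}^{n-1}g^j(\xi)\right)=\xi_{g^{v_1}}+g^{v_1}(\Tr_g(\xi))=\xi_{g^{v_1}},$$
where the last equality uses the hypothesis $\Tr_g(\xi)=0$. Iterating gives periodicity modulo $n$.

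For (ii), after using (i) to reduce to $0\leq v_1\leq v_2<n$, a direct telescoping gives
$$\xi_{g^{v_2}}-\xi_{g^{v_1}}=\sum_{i=v_1}^{v_2-1}g^i(\xi)=g^{v_1}\!\left(\sum_{j=0}^{v_2-v_1-1}g^j(\xi)\right)=g^{v_1}(\xi_{g^{v_2-v_1}}),$$
and in characteristic $2$ the minus becomes a plus; the reduction modulo $n$ in the exponent of the argument comes from (i). The case $v_1>v_2$ is symmetric. Part (iii) is the only one with a real twist: from $\varphi g\varphi=g^{-1}$ and $\varphi^2=1$ I infer $\varphi g^i=g^{-i}\varphi$, hence $\varphi(g^i(\xi))=g^{-i}(\varphi(\xi))=g^{-i}(\xi)$. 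Therefore
$$\varphi(\xi_{g^v})=\sum_{i=0}^{v-1}g^{-i}(\xi)=\xi+\sum_{i=1}^{v-1}g^{n-i}(\xi)=\xi+\sum_{j=n-v+1}^{n-1}g^j(\xi).$$
I would then rewrite the last sum as $\Tr_g(\xi)+\xi_{g^{n-v+1}}$ (again the sign disappears in characteristic $2$), and use $\Tr_g(\xi)=0$ together with (i) to identify $\xi_{g^{n-v+1}}=\xi_{g^{-v+1\pmod n}}$.

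There is no real obstacle; the only point requiring a bit of care is the index bookkeeping in (iii), specifically that the reindexing $j=n-i$ produces exactly a ``complementary'' partial trace, and that the missing term $\xi_{g^{n-v+1}}$ together with $\xi$ matches the asserted right-hand side after reducing the exponent $-v+1$ modulo $n$ via (i).
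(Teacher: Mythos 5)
Your proposal is correct and follows essentially the same route as the paper: (i) and (ii) by telescoping the partial sums $\xi_{g^v}$ and invoking $\Tr_g(\xi)=\xi_{g^n}=0$, and (iii) by using $\varphi g^i=g^{-i}\varphi$ together with $\varphi(\xi)=\xi$, reindexing $j=n-i$, and absorbing the full trace. The only cosmetic differences are that you spell out the periodicity argument for (i) (which the paper leaves as ``easily follows'') and treat the reindexed sum slightly differently, both immaterial.
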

\begin{proof}
As $\Tr_g(\xi)=\xi_{g^n}$, assertion (i) easily follows from $\Tr_g(\xi)=0$. To prove (ii),
%modifica24marzo
$v_1<v_2$ may be assumed, as the case $v_1\ge v_2$ can be  prevented by replacing  $v_2$  with $v_2+hn$ for a sufficiently large  positive integer $h$). Then
$$
\xi_{g^{v_1}}+\xi_{g^{v_2}}=\sum_{i=v_1}^{v_2-1}g^i(\xi)=g^{v_1}\Big(\sum_{i=0}^{v_2-v_1-1}g^i(\xi)\Big)=g^{v_1}(\xi_{g^{v_2-v_1}}).
$$

If $v=0,1$, then (iii) clearly holds. We compute $\varphi(\xi_{g^v})$ for $v\ge 2$. By  $\varphi(\xi)=\xi$ and $\varphi g=g^{-1}\varphi$,
$$
\varphi(\xi_{g^v}) = \sum_{i=0}^{v-1}g^{n-i}(\xi)=\sum_{j=n-v+1}^ng^j(\xi).
$$
{}From $\Tr_g(\xi)=0$,  $$\varphi(\xi_{g^v})=\sum_{j=0}^{n-v}g^j(\xi)+g^n(\xi)=\xi_{g^{n-v+1}}+\xi=\xi_{g^{-v+1\pmod n}}+\xi,$$ when the assertion follows. \end{proof}
For an odd $k$ with $1\le k \le 2n-1$, define, as in Witt's paper \cite{witt1936}:
%%modifica 22 marzo 2011
\begin{equation}
\label{eq18marzo}
d=\frac{x}{\Tr_g(x)},\qquad a=d^2+d.
\end{equation}
Furthermore, let
\begin{equation}
\label{eq22marzo}
c_k=g_0^k(x),\qquad e_k=\frac{1}{\Tr_g(c_k)}\sum_{v=0}^{n-1} a_{g^v}g^v(c_k).
\end{equation}
A straightforward computation gives the following result:
\begin{equation}
\label{witthilbert}
g(e_k)+e_k=a.
\end{equation}
Our purpose is to show that $\varphi(e_k)+e_k=a$ also holds, see Proposition \ref{21feb5} below. This requires some more computation.

\begin{proposition}\label{squareek} The rational function $e_k$ is a square.
\end{proposition}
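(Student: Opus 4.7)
The strategy is to push the characteristic $2$ identity $\bigl(\sum_i\alpha_i\bigr)^{2}=\sum_i\alpha_i^{2}$ through the defining formula \eqref{eq22marzo}, after first recognising that $a$ itself is a square in $\K(\bar{\cX})$.

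Starting point: write
\[
a=d^{2}+d=\frac{x\bigl(x+\Tr_{g}(x)\bigr)}{\Tr_{g}(x)^{2}}
\]
and observe that $x+\Tr_{g}(x)=\sum_{i=1}^{n-1}g^{i}(x)=\sum_{i=1}^{n-1}g_{0}^{2i}(x)$ is a $\K$-linear combination of translates $g_{0}^{j}(x)$. By Lemma \ref{keybis} the product $x\bigl(x+\Tr_{g}(x)\bigr)$ is then a square in $\K(\bar{\cX})$, so $a=s^{2}$ for some $s\in\K(\bar{\cX})$.

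Second, since $g$ commutes with squaring,
\[
a_{g^{v}}=\sum_{i=0}^{v-1}g^{i}(s^{2})=\Big(\sum_{i=0}^{v-1}g^{i}(s)\Big)^{2}=s_{g^{v}}^{2},
\]
so \eqref{eq22marzo} becomes $e_{k}\cdot\Tr_{g}(c_{k})=\sum_{v=0}^{n-1}s_{g^{v}}^{2}\,g^{v}(c_{k})$. The next step is to show that for every $v$ the product $g^{v}(c_{k})\cdot\Tr_{g}(c_{k})$ is a square. Because $\Tr_{g}(c_{k})$ is $g$-invariant,
\[
g^{v}(c_{k})\cdot\Tr_{g}(c_{k})=g^{v}\bigl(c_{k}\cdot\Tr_{g}(c_{k})\bigr)=g^{v}g_{0}^{k}\bigl(x\cdot\Tr_{g}(x)\bigr),
\]
and $x\cdot\Tr_{g}(x)$ is a square by a second application of Lemma \ref{keybis} (taking $\delta=\Tr_{g}(x)$); since $g^{v}g_{0}^{k}$ is an automorphism, $g^{v}(c_{k})\cdot\Tr_{g}(c_{k})=\lambda_{v}^{2}$ for some $\lambda_{v}\in\K(\bar{\cX})$.

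Multiplying the previous identity by $\Tr_{g}(c_{k})$ yields
\[
e_{k}\cdot\Tr_{g}(c_{k})^{2}=\sum_{v=0}^{n-1}s_{g^{v}}^{2}\lambda_{v}^{2}=\Big(\sum_{v=0}^{n-1}s_{g^{v}}\lambda_{v}\Big)^{2},
\]
so $e_{k}$ is the square of $\Tr_{g}(c_{k})^{-1}\sum_{v}s_{g^{v}}\lambda_{v}$. The only conceptual point is the very first step---recognising that $a$ is already a square---after which Lemma \ref{keybis} cascades through the rest of the argument and the manipulations reduce to the characteristic $2$ identity $(\sum_i\alpha_i)^{2}=\sum_i\alpha_i^{2}$.
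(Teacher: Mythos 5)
Your proof is correct and follows essentially the same route as the paper's: both reduce the claim to showing that $a_{g^v}$ and $\Tr_g(c_k)\cdot g^v(c_k)$ are squares via Lemma \ref{keybis} and then invoke the characteristic-$2$ identity $(\sum_i\alpha_i)^2=\sum_i\alpha_i^2$. The only cosmetic differences are that the paper shows $d=x^2/(x\Tr_g(x))$ is itself a square rather than working with $a$ directly, and it verifies that $\Tr_g(c_k)\cdot g^v(c_k)$ is a square by multiplying by $x^2$ and factoring into two Lemma \ref{keybis} squares instead of applying the automorphism $g^vg_0^k$ to $x\Tr_g(x)$.
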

\begin{proof} By Lemma  \ref{keybis} $d=\frac{x}{\Tr_g(x)}=\frac{x^2}{x\Tr_g(x)}$ is a square. Therefore $a_{g^v}$ is a square for each $v$. Then, we only need to show that
$\Tr_g(c_k)\cdot g^v(c_k)$ is a square for each $v$. This follows from the fact that
$$
x^2\cdot \Tr_g(c_k)\cdot g^v(c_k)=(x\Tr_g(g_0^k(x)))(xg_0^{2v+k}(x))=\Big(x\sum_{i=0}^{n-1}g_0^{2i+k}(x)\Big)(xg_0^{2v+k}(x))
$$
is a square by Lemma \ref{keybis}.
\end{proof}
\begin{lemma}\label{lemmadia} For the rational function $a$, both $\Tr_g(a)=0$ and $\varphi(a)=a$ hold.
\end{lemma}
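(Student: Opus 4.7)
The plan is to unwind the definitions $a=d^2+d$ and $d=x/\Tr_g(x)$ and reduce each of the two claims to a one-line calculation with the relative trace. The key observations are that $\Tr_g(x)$ is $g$-invariant (being itself a $g$-trace), and that $\varphi$ commutes with squaring and satisfies $\varphi(x)=x$ together with the relation $\varphi g=g^{-1}\varphi$, already used to produce the dihedral group of order $4n$ in $\aut(\bar{\cX})$.

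For the vanishing of $\Tr_g(a)$, I would first note that $g$-invariance of $\Tr_g(x)$ gives $g^i(d)=g^i(x)/\Tr_g(x)$, so summing over $i=0,\dots,n-1$ yields $\Tr_g(d)=\Tr_g(x)/\Tr_g(x)=1$. Since we are in characteristic $2$, Frobenius commutes with every $\K$-automorphism, and in particular $\Tr_g(d^2)=\Tr_g(d)^2=1$. Adding gives $\Tr_g(a)=\Tr_g(d^2)+\Tr_g(d)=1+1=0$.

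For $\varphi$-invariance of $a$, the same commutation with squaring reduces the claim to $\varphi(d)=d$ (the alternative $\varphi(d)=d+1$ does not even need to be separately excluded, since $\varphi(d)^2+\varphi(d)$ equals $d^2+d$ in either case). From $\varphi(x)=x$ and $\varphi g^i=g^{-i}\varphi$, one has
\[
\varphi(\Tr_g(x))=\sum_{i=0}^{n-1}\varphi g^i(x)=\sum_{i=0}^{n-1}g^{-i}(x)=\Tr_g(x),
\]
since $-i$ runs over a complete set of residues modulo $n$. Therefore $\varphi(d)=\varphi(x)/\varphi(\Tr_g(x))=x/\Tr_g(x)=d$, and consequently $\varphi(a)=a$.

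There is no real obstacle here: the lemma is essentially a bookkeeping check that the element $d$ in Witt's construction was chosen to carry exactly the symmetries needed later to extend $\varphi$ to $\cX$. The only point requiring a moment's care is the interaction of $\varphi$ with the trace, which is handled by the dihedral relation $\varphi g=g^{-1}\varphi$; everything else is automatic in characteristic $2$.
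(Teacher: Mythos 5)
Your proof is correct and follows essentially the same route as the paper's: compute $\Tr_g(a)=\Tr_g(d)^2+\Tr_g(d)=1+1=0$ using $\Tr_g(d)=1$, and deduce $\varphi(a)=a$ from $\varphi(\Tr_g(x))=\Tr_g(x)$ (via $\varphi g=g^{-1}\varphi$) and hence $\varphi(d)=d$. You merely spell out a few steps the paper leaves implicit, such as the verification that $\Tr_g(x/\Tr_g(x))=1$.
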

\begin{proof}
We have that  $\Tr_g(a)=\Tr_g(d^2+d)=\Tr_g(d)^2+\Tr_g(d)=1+1=0$.
Moreover, from $\varphi g=g^{-1}\varphi$ it follows that $\varphi(\Tr_g(x))=\Tr_g(x)$. Therefore $\varphi(d)=d$ and $\varphi(a)=a$.
\end{proof}
\begin{lemma}\label{21feb1} $\Tr_g(ag_0(x))=\Tr_g(g(a)g_0(x))$.
\end{lemma}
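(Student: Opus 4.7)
The plan is to reduce the identity to the scalar relation $\tau = 1/w_1$, where $\tau := \Tr_g\bigl(1/(x+w_1)\bigr)$, and then to prove that relation by a residue argument.

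First, by the standard identity $\Tr_g(g(\xi)\eta) = \Tr_g(\xi\,g^{-1}(\eta))$ together with $g^{-1}g_0 = g_0^{-1}$, one has $\Tr_g(g(a)g_0(x)) = \Tr_g(a\,g_0^{-1}(x))$, so in characteristic $2$ the claim is equivalent to $\Tr_g(aA) = 0$, where $A := g_0(x)+g_0^{-1}(x)$. From \eqref{gizero}, applied with $(X_1,Y_1)=P_0=(w_1,w_2)$ and $(X_{-1},Y_{-1})=\ominus P_0=(w_1,w_1+w_2)$, a direct computation gives $A = w_1 x/(x+w_1)^2$, and accordingly $xA = v^2$ with $v = \sqrt{w_1}\,x/(x+w_1)$, in agreement with Lemma \ref{keybis}. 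Writing $a = x(x+T)/T^2$ with $T = \Tr_g(x)$, so that $aA = v^2(x+T)/T^2$, and using that $T$ is $g$-invariant and that the Frobenius is additive in characteristic $2$, we get
\begin{equation*}
T^2\,\Tr_g(aA) \;=\; T\,\Tr_g(v)^2+\Tr_g(xv^2).
\end{equation*}
Substituting $v$ and using the elementary expansions $x/(x+w_1) = 1+w_1/(x+w_1)$ and $x^3/(x+w_1)^2 = x+w_1^2\,x/(x+w_1)^2$ together with $n=2^m$ being even, everything collapses onto $\tau$, and a short manipulation yields the factorization
\begin{equation*}
T^2\,\Tr_g(aA) \;=\; w_1\,(1+w_1\tau)\bigl(T(1+w_1\tau)+w_1^2\tau\bigr).
\end{equation*}
Hence it suffices to prove $\tau = 1/w_1$.

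The final and main step is the identity $\tau = 1/w_1$. Since $\tau\in\K(\bar\cX)^{\langle g\rangle}$, its potential poles are confined to the $g$-orbit $\cZ$ of $P_0$. At $P_0$ exactly the two terms indexed by $i=0$ and $i=n-1$ contribute simple poles, and I claim that both residues equal $1$ with respect to the uniformizer $u = x+w_1$, so they cancel in characteristic~$2$. For $i=0$ this is immediate. For $i=n-1$ the key input is that the holomorphic invariant differential $\omega=dx/x$ on $\bar\cX$ is preserved by the translation $g^{-1}$: expanding $g^{-1}(\omega)=\omega$ as $d(g^{-1}(x))/g^{-1}(x) = dx/x$ and evaluating at $P_0$, where both $x$ and $g^{-1}(x)$ take the value $w_1$, forces $d(g^{-1}(x))|_{P_0}=du$, so the leading coefficient of $g^{-1}(x+w_1)$ in $u$ is $1$ and the residue of this term at $P_0$ is also $1$. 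By $g$-invariance the same cancellation holds at every point of $\cZ$, so $\tau$ is regular on all of $\bar\cX$ and hence a constant of $\K$. Finally, evaluating at $Y_\infty$ and using $x(R) = x(\ominus R)$ to pair the terms indexed by $i$ and $n-i$ (each pair summing to $0$ in characteristic~$2$) leaves only the term $i=n/2$, which equals $1/w_1$ because $[n]P_0=(0,\sqrt{\mu})$ is the unique non-trivial $2$-torsion point of $\bar\cX$. This gives $\tau = 1/w_1$, and hence the lemma. The main obstacle is the residue computation at $P_0$; the invariance of $\omega$ is the clean geometric reason why the leading coefficient above is exactly~$1$.
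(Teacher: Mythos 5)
Your proof is correct, and while the opening reduction coincides with the paper's (both pass to $\Tr_g\bigl(a(g_0(x)+g_0^{-1}(x))\bigr)=0$, compute $g_0(x)+g_0^{-1}(x)=w_1x/(x+w_1)^2$, and boil everything down to the single scalar identity $\Tr_g\bigl(x/(x+w_1)\bigr)=1$, i.e.\ your $\tau=1/w_1$), the decisive final step is handled by a genuinely different argument. The paper gets constancy for free from $\Tr_g(\xi)^2+\Tr_g(\xi)=\Tr_g(\xi^2+\xi)=0$ (its \eqref{17mar1}) and then pins down the value by evaluating at $\ominus P_0$, which is exactly what forces it to prepare Lemmas \ref{strano0} and \ref{strano} (the orders of vanishing of $x+g(x)$ and $xg(x)+w_1^2$ at $\ominus P_0$, the latter via a tangent-line computation). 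You instead prove constancy by showing $\tau$ is everywhere regular --- the only candidate poles lie on the single $g$-orbit $\cZ$, and at $P_0$ the two offending terms ($i=0$ and $i=n-1$) have matching principal parts $u^{-1}$, the leading coefficient $1$ for the second term being extracted from the translation-invariance of $\omega=dx/x$ --- and then evaluate at $Y_\infty$, where the pairing $x(R)=x(\ominus R)$ kills everything except the $2$-torsion term $1/(x([n]P_0)+w_1)=1/w_1$. I checked the residue computation and the factorization $T^2\Tr_g(aA)=w_1(1+w_1\tau)\bigl(T(1+w_1\tau)+w_1^2\tau\bigr)$; both are right. What your route buys is the complete elimination of Lemmas \ref{strano0} and \ref{strano}, at the cost of importing the invariant differential; the evaluation at $Y_\infty$ is the same symmetry trick the paper already uses in Lemma \ref{tracciain}. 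Note that you could streamline further by borrowing the paper's one-line constancy argument ($\Tr_g(\xi)\in\{0,1\}$ since it satisfies $T^2+T=0$ in an integral domain) and keeping only your $Y_\infty$ evaluation, which would make the residue analysis at $P_0$ unnecessary as well.
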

\begin{proof}
Since $\Tr_g(g(a)g_0(x))=\Tr_g(g(ag_0^{-1}(x)))=\Tr_g(ag_0^{-1}(x))$, an equivalent formulation of the statement is
$$
\Tr_g(a(g_0(x)+g_0^{-1}(x)))=0.
$$
Let $\xi=x/(x+w_1)$.
By a straightforward computation
$$
g_0(x)+g_0^{-1}(x)= \frac{xw_1}{x^2+w_1^2}=\xi^2+\xi.
$$
Taking into account that
$a=d^2+d$ with $d=x/\Tr_g(x)$ we have
\begin{eqnarray*}
\Tr_g(a(g_0(x)+g_0^{-1}(x)))&=& \Tr_g\left((d^2+d)(\xi^2+\xi)\right) \\
%&=& \Tr_g(d\xi)^2+\Tr_g(d^2\xi)+\Tr_g(d\xi^2)+\Tr_g(d\xi)\\
%&=& \frac{w_1}{\Tr_g(x)^2}\Tr_g\left(\frac{x^2\Tr_g(x)}{x^2+w_1^2}+\frac{x^3}{x^2+w_1^2}\right)\\
 &=&\frac{1}{\Tr_g(x)^2}\left(
\Tr_g(x)\Tr_g\left(x(\xi^2+\xi)\right)+\Tr_g\left(x^2(\xi^2+\xi)\right)
\right).
\end{eqnarray*}
Note that $x(\xi^2+\xi)=w_1\xi^2$ and that $x^2(\xi^2+\xi)=w_1x+w_1^2(\xi^2+\xi)$. Also
\begin{equation}\label{17mar1}
\Tr_g(\xi^2+\xi)=\Tr_g(g_0(x)+g_0^{-1}(x))=
%\Tr_g(g_0(x))+\Tr_g(g^{-1}(g_0(x)))=
2\Tr_g(g_0(x))=0.
\end{equation}
Therefore, we need to show that
$\Tr_g(x)\cdot \Tr_g\left(w_1\xi^2\right)=\Tr_g\left(w_1x\right),
$
which is clearly equivalent to $\Tr_g(\xi)=1$. By \eqref{17mar1}, $\Tr_g(\xi)^2+\Tr_g(\xi)=0$, whence either $\Tr_g(\xi)=1$ or $\Tr_g(\xi)=0$. To prove that the latter case cannot occur
we show that $\Tr_g(\xi)(\ominus P_0)=1$.
Note that
$$
(g^2(\xi)+\ldots+g^{n-1}(\xi))(\ominus P_0)=\xi([3]P_0)+\xi([5]P_0)+\ldots+\xi([2n-5]P_0)+\xi([2n-3]P_0).
$$
As $\xi$ only depends on $x$, and   $x([i]P_0)=x([2n-i]P_0)$, we have
$$
\xi([i]P_0)+\xi([2n-i]P_0)=0,
$$
whence $(g^2(\xi)+\ldots+g^{n-1}(\xi))(\ominus P_0)=0$. Thus,  $\Tr_g(\xi)(\ominus P_0)=(\xi+g(\xi))(\ominus P_0)$.
By a straightforward computation
$$
\xi+g(\xi)=w_1\frac{x+g(x)}{(x+w_1)(g(x)+w_1)},
$$
whence
$$
\frac{1}{\xi+g(\xi)}=\frac{1}{w_1}\cdot \frac{(xg(x)+w_1^2)+w_1(x+g(x))}{x+g(x)}=1+ \frac{1}{w_1}\cdot \frac{xg(x)+w_1^2}{x+g(x)}.
$$
By Lemmas \ref{strano0} and \ref{strano}, $\ominus P_0$ is a zero of
 $\frac{xg(x)+w_1^2}{x+g(x)}$. Thus $(\xi+g(\xi))(\ominus P_0)=1$, and the proof is completed.
\end{proof}

\begin{lemma}\label{21feb2} For each odd $k$ with $1\le k \le 2n-1$, $\Tr_g(ag_0^k(x))=\Tr_g(g^k(a)g_0^k(x))$.
\end{lemma}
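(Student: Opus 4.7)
The plan is to reduce Lemma \ref{21feb2} to a generalisation of Lemma \ref{21feb1}. By the $g$-invariance of $\Tr_g$,
\[
\Tr_g(g^k(a)\,g_0^k(x))=\Tr_g\bigl(g^{-k}(g^k(a)\,g_0^k(x))\bigr)=\Tr_g(a\,g_0^{-k}(x)),
\]
so the statement is equivalent to $\Tr_g\bigl(a(g_0^k(x)+g_0^{-k}(x))\bigr)=0$. A direct computation with \eqref{gizero}, using $[-k]P_0=\ominus[k]P_0=(X_k,X_k+Y_k)$, yields
\[
g_0^k(x)+g_0^{-k}(x)=\frac{X_k\,x}{(x+X_k)^2}=\xi_k^2+\xi_k,\qquad \xi_k:=\frac{x}{x+X_k},
\]
and Lemma \ref{21feb1} is precisely the case $k=1$ of this formulation.

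The next step is to mimic the proof of Lemma \ref{21feb1} with $w_1$ replaced by $X_k$ and $\xi$ replaced by $\xi_k$. The verbatim analogues
\[
x(\xi_k^2+\xi_k)=X_k\xi_k^2,\qquad x^2(\xi_k^2+\xi_k)=X_kx+X_k^2(\xi_k^2+\xi_k)
\]
follow by the same algebraic manipulation, and $X_k\ne 0$ because $[k]P_0$ is not $2$-torsion ($[n]P_0=(0,\sqrt\mu)$ is the unique affine $2$-torsion point and $n$ is even). Plugging these identities into the expansion of $\Tr_g(a(\xi_k^2+\xi_k))$ via $a=d^2+d$ with $d=x/\Tr_g(x)$, exactly as in Lemma \ref{21feb1}, reduces the claim to showing $\Tr_g(\xi_k)=1$. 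Moreover the $g$-orbit of $g_0^k(x)$ equals $\{g_0^j(x):j\text{ odd},\,1\le j\le 2n-1\}$, the same as that of $g_0^{-k}(x)$, so $\Tr_g(\xi_k^2+\xi_k)=\Tr_g(g_0^k(x))+\Tr_g(g_0^{-k}(x))=0$, whence $\Tr_g(\xi_k)^2+\Tr_g(\xi_k)=0$ and the constant $\Tr_g(\xi_k)$ belongs to $\{0,1\}$.

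The main obstacle is ruling out $\Tr_g(\xi_k)=0$. In Lemma \ref{21feb1} this was done by evaluating at $\ominus P_0$, exploiting that both $\xi$ and $g(\xi)$ have poles there — a feature specific to $X_k=w_1=x(\ominus P_0)$, and dependent on the order computations of Lemmas \ref{strano0} and \ref{strano}. For $k\ne\pm 1$ the evaluation at $\ominus P_0$ becomes delicate because two distinct terms $g^i(\xi_k)$ acquire poles there (namely the $i$ with $2i\equiv k+1$ and $2i\equiv 1-k\pmod{2n}$). I would instead evaluate at $Y_\infty$, where every $g^i(\xi_k)$ is regular, since the poles of $\xi_k$ lie at $[\pm k]P_0$ with odd index while $Y_\infty\oplus[2i]P_0=[2i]P_0$ has even index. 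One has $\xi_k(Y_\infty)=1$ and $g^i(\xi_k)(Y_\infty)=X_{2i}/(X_{2i}+X_k)$ for $i\ne 0$. The pairing $i\leftrightarrow n-i$ uses $X_{2i}=X_{-2i}$ to cancel these terms in characteristic $2$, while the unique self-paired index $i=n/2$ contributes $X_n/(X_n+X_k)=0$ since $X_n=0$. Hence $\Tr_g(\xi_k)(Y_\infty)=1$, forcing $\Tr_g(\xi_k)=1$, and the proof is complete.
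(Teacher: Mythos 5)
Your proof is correct, but it takes a genuinely different route from the paper's. The paper disposes of Lemma \ref{21feb2} in two lines: since $k$ is odd, $g_0^k$ is again a generator of $\langle g_0\rangle$ and $\Tr_{g^k}$ coincides with $\Tr_g$, so the element $\bar a$ built from the generator $g_0^k$ equals $a$, and Lemma \ref{21feb1} applied verbatim with $g_0$ replaced by $g_0^k$ (hence $P_0$ by $[k]P_0$ and $w_1$ by $X_k$) yields the claim with no new computation. You instead re-run the computation of Lemma \ref{21feb1} for general odd $k$ --- the identities $g_0^k(x)+g_0^{-k}(x)=\xi_k^2+\xi_k$, $x(\xi_k^2+\xi_k)=X_k\xi_k^2$, $x^2(\xi_k^2+\xi_k)=X_kx+X_k^2(\xi_k^2+\xi_k)$ all check out, as do $X_k\neq 0$ and $\Tr_g(\xi_k^2+\xi_k)=0$ --- and then replace the paper's evaluation of the trace at $\ominus P_0$ (which leans on the divisor computations of Lemmas \ref{strano0} and \ref{strano}) by an evaluation at $Y_\infty$. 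That last step is the genuinely new ingredient and it is sound: every $g^i(\xi_k)$ is regular at $Y_\infty$ by your parity argument, the pairing $i\leftrightarrow n-i$ cancels all terms with $i\neq 0,\,n/2$ because $\ominus$ preserves the $x$-coordinate, and $X_n=0$ kills the middle term, leaving $\Tr_g(\xi_k)(Y_\infty)=\xi_k(Y_\infty)=1$; since $\Tr_g(\xi_k)^2+\Tr_g(\xi_k)=0$ forces $\Tr_g(\xi_k)\in\{0,1\}$, this settles the constant. What the paper's route buys is brevity, at the price of having to accept that Lemmas \ref{keybis}--\ref{strano} and \ref{21feb1} hold for an arbitrary generator of $\bar{\cX}[2n]$; what yours buys is a self-contained argument whose one delicate point is handled uniformly in $k$, and which would in fact also simplify the final step of Lemma \ref{21feb1} itself.
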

\begin{proof}
As $k$ is odd, $g_0^k$ is a generator of $\langle g_0\rangle$. Therefore, by Lemma \ref{21feb1}, we have
$\Tr_{g^k}(\bar ag_0^k(x))=\Tr_{g^k}(g^k(\bar a)g_0^k(x))$, where $\bar a=(x/\Tr_{g^k}(x))^2+x/\Tr_{g^k}(x)$.
But clearly $\Tr_{g^k}$ coincides with $\Tr_g$. Thus, $\bar a=a$ and
%modifica24marzo
$$
\Tr_{g}(ag_0^k(x))=\Tr_{g}(g^k(a)g_0^k(x)).
$$
also holds.
\end{proof}

\begin{lemma}\label{21feb3} For each odd $k$ with $1\le k \le 2n-1$,
$$
\Tr_g(g_0^k(x)\cdot (a+g(a)+\ldots+g^k(a)))=0.
$$
\end{lemma}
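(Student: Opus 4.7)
My plan is to reduce everything to a single symmetry observation about $\Tr_g(a\,g_0^m(x))$ and then use characteristic $2$ to cancel terms in pairs. First I would use linearity to expand
$$\Tr_g\Bigl(g_0^k(x)(a+g(a)+\dots+g^k(a))\Bigr)=\sum_{j=0}^{k}\Tr_g\bigl(g^j(a)\,g_0^k(x)\bigr),$$
and then, exploiting that $\Tr_g$ is $\langle g\rangle$-invariant (so $\Tr_g(g^{-j}(\eta))=\Tr_g(\eta)$) together with $g=g_0^2$, apply $g^{-j}$ inside each summand to rewrite
$$\Tr_g(g^j(a)\,g_0^k(x))=\Tr_g(a\cdot g^{-j}(g_0^k(x)))=\Tr_g(a\,g_0^{k-2j}(x)).$$
Thus the left-hand side equals $\sum_{j=0}^{k}\Tr_g(a\,g_0^{k-2j}(x))$, whose exponents $k-2j$ run over $k+1$ odd integers symmetric about $0$.

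Next I would record a symmetry consequence of Lemma \ref{21feb2}: the same $\langle g\rangle$-invariance trick converts the identity $\Tr_g(ag_0^m(x))=\Tr_g(g^m(a)\,g_0^m(x))$ into
$$\Tr_g(a\,g_0^m(x))=\Tr_g(a\,g_0^{-m}(x))\qquad\text{for every odd }m,$$
reading exponents modulo $2n$. This is the key input that the full setup has been built to provide.

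Finally I would pair the summation indices $j$ and $k-j$. Because $k$ is odd, $j\neq k-j$ for any $j$, so these pairs are disjoint and exhaust $\{0,1,\dots,k\}$, giving $(k+1)/2$ pairs. Each pair contributes $\Tr_g(a\,g_0^m(x))+\Tr_g(a\,g_0^{-m}(x))$ with $m=k-2j$ odd, which by the symmetry above equals $2\Tr_g(a\,g_0^m(x))=0$ since $\mathrm{char}\,\K=2$. Summing the vanishing pair contributions yields the claim. I do not expect any step to be a genuine obstacle; the only care needed is the exponent bookkeeping modulo $2n$, but since $2n$ is even, reducing an odd integer mod $2n$ produces an odd integer in $[1,2n-1]$, so Lemma \ref{21feb2} continues to apply.
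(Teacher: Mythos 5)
Your proof is correct, but it takes a genuinely different route from the paper's. The paper argues by induction on odd $k$: the base case $k=1$ is Lemma~\ref{21feb1}, and the inductive step applies $g$ to the argument of $\Tr_g$ in the hypothesis for $k-2$ (turning it into the middle block $g(a)+\dots+g^{k-1}(a)$ paired with $g_0^k(x)$) and then disposes of the two leftover terms $a$ and $g^k(a)$ via Lemma~\ref{21feb2}. You instead avoid induction entirely: you shift each summand by $g^{-j}$ to rewrite the whole expression as $\sum_{j=0}^{k}\Tr_g(a\,g_0^{k-2j}(x))$, recast Lemma~\ref{21feb2} as the symmetry $\Tr_g(a\,g_0^{m}(x))=\Tr_g(a\,g_0^{-m}(x))$ for odd $m$, and cancel the terms in pairs $j\leftrightarrow k-j$ using characteristic $2$. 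All the individual steps check out: the trace is indeed $\langle g\rangle$-invariant since $g$ has order $n$, the exponents $k-2j$ are odd (hence nonzero and still odd after reduction modulo $2n$), and the pairing is fixed-point-free because $k$ is odd. Your argument has the merit of isolating the real content of Lemma~\ref{21feb2} as a reflection symmetry $m\mapsto -m$ of the quantities $\Tr_g(a\,g_0^m(x))$, which makes the cancellation transparent; the paper's induction is shorter on the page but hides this symmetry inside the recursion. Both proofs consume exactly the same inputs, so neither is more general than the other.
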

\begin{proof} It is by induction on $k$. The assertion for $k=1$ is just Lemma \ref{21feb1}.
Now assume that
$$
\Tr_g(g_0^{k-2}(x)\cdot (a+g(a)+\ldots+g^{k-2}(a)))=0.
$$
Applying $g$ to the argument of $\Tr_g$ gives
$$
\Tr_g(g_0^{k}(x)\cdot (g(a)+g^2(a)+\ldots+g^{k-1}(a)))=0.
$$
By Lemma \ref{21feb2} and the additivity of $\Tr_g$,  the assertion follows.
\end{proof}

\begin{proposition}\label{21feb5} For each odd $k$ between $1$ and $2n-1$,
$$\phi(e_k)+e_k=a$$
\end{proposition}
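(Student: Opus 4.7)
The plan is to compute $\varphi(e_k)$ by distributing $\varphi$ over the Witt sum that defines $e_k$, and then reindex so that the result telescopes against $e_k$ itself.

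First I would check that the denominator $\Tr_g(c_k)=\sum_{v=0}^{n-1}g_0^{2v+k}(x)$ is $\varphi$-invariant: since $\varphi(x)=x$ and $\varphi g_0=g_0^{-1}\varphi$, the action of $\varphi$ merely permutes the odd powers of $g_0$ evaluated at $x$. Next, by Lemma \ref{lemmadia} the element $a$ satisfies both hypotheses of Lemma \ref{nuovissimo}, so part (iii) of that lemma gives $\varphi(a_{g^v})=a_{g^{-v+1\pmod n}}+a$. The same relations $\varphi(x)=x$, $\varphi g_0=g_0^{-1}\varphi$, and $g_0^{2n}=1$ also produce $\varphi(g^v(c_k))=g^{-v-k\pmod n}(c_k)$.

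Applying $\varphi$ to the identity $\Tr_g(c_k)\,e_k=\sum_{v=0}^{n-1}a_{g^v}g^v(c_k)$ and substituting the expressions above yields
$$\Tr_g(c_k)\,\varphi(e_k)=\sum_{v=0}^{n-1}\bigl(a_{g^{-v+1\pmod n}}+a\bigr)\,g^{-v-k\pmod n}(c_k).$$
I would then reindex by $u=-v-k\pmod n$, which sends $-v+1$ to $u+k+1\pmod n$, and add $\Tr_g(c_k)\,e_k$ to obtain
$$\Tr_g(c_k)\bigl(\varphi(e_k)+e_k\bigr)=\sum_{u=0}^{n-1}\bigl(a_{g^{u+k+1\pmod n}}+a_{g^u}\bigr)\,g^u(c_k)+a\,\Tr_g(c_k).$$

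The last step is to verify that the bracketed sum vanishes. Lemma \ref{nuovissimo}(ii) telescopes $a_{g^{u+k+1\pmod n}}+a_{g^u}$ to $g^u(a_{g^{k+1\pmod n}})$, so the sum equals $\Tr_g(a_{g^{k+1\pmod n}}\,c_k)$. Using Lemma \ref{nuovissimo}(i) to replace $a_{g^{k+1\pmod n}}$ by $a_{g^{k+1}}=a+g(a)+\cdots+g^k(a)$, this becomes $\Tr_g\bigl(g_0^k(x)\cdot(a+g(a)+\cdots+g^k(a))\bigr)$, which vanishes by Lemma \ref{21feb3}. Dividing by $\Tr_g(c_k)$ then gives $\varphi(e_k)+e_k=a$, as required. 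The only delicate point I anticipate is the index bookkeeping between the cyclic groups of orders $n$ (for $g$) and $2n$ (for $g_0$); once the change of variable $u=-v-k\pmod n$ is in place, the Witt-style telescoping occurs on cue.
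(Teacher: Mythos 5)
Your proof is correct and follows essentially the same route as the paper's: the same use of Lemma \ref{lemmadia} to invoke Lemma \ref{nuovissimo}(iii), the same reindexing $u=-v-k\pmod n$ (the paper's $t(v)$), and the same telescoping via Lemma \ref{nuovissimo}(ii) down to $\Tr_g\bigl(g_0^k(x)\cdot(a+g(a)+\cdots+g^k(a))\bigr)$, which vanishes by Lemma \ref{21feb3}. Nothing to add.
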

\begin{proof}
%modifica24marzo
It is straightforward to show that $\varphi(\Tr_g(c_k))=\Tr_g(c_k)$. Therefore, by (iii) of Lemma \ref{nuovissimo} and Lemma \ref{lemmadia},
$$
\phi(e_k)+e_k=\frac{1}{\Tr_g(c_k)}\left(\sum_{v=0}^{n-1} a_{g^v}g_0^{2v+k}(x)+\sum_{v=0}^{n-1}(a+a_{g^{-v+1}})g_0^{-2v-k}(x)\right).
$$
Let $t(v)=-v-k\pmod n$. Then $g_0^{-2t(v)-k}(x)=g_0^{2v+k}(x)$. Therefore
$$
\phi(e_k)+e_k=\frac{1}{\Tr_g(c_k)}\left(\sum_{v=0}^{n-1}(a_{g^v}+a+a_{g^{-t(v)+1}})g_0^{2v+k}(x))\right).
$$
By (ii) of Lemma \ref{nuovissimo},
\begin{eqnarray*}
\phi(e_k)+e_k &=& \frac{1}{\Tr_g(c_k)}\left(\sum_{v=0}^{n-1}(a+g^v(a_{g^{-t(v)+1-v\pmod n}}))g_0^{2v+k}(x))\right)\\
& = & \frac{a\Tr_g(c_k)}{\Tr_g(c_k)}+ \frac{1}{\Tr_g(c_k)}\left(\sum_{v=0}^{n-1}g^v(a_{g^{k+1}}g_0^k(x))\right)\\
& = & a+\frac{1}{\Tr_g(c_k)} \Tr_g((a+g(a)+\ldots+g^k(a))\cdot g_0^k(x)).
\end{eqnarray*}
The claim then follows by Lemma \ref{21feb3}.
\end{proof}

\subsection{Proof of the existence}
%%modifica22marzo 2011
\label{exis22marzo}
%modifica23marzo
We are in a position to show the existence of curves which provide examples for case (1) of Theorem \ref{princ}.

For this purpose, we consider the Artin-Schreier extension $\cX_k$ of $\bar{\cX}$ defined by the equation $z^2+z+e_k=0$, where $k$ is an odd integer with $1\leq k \leq 2n-1$.

We first construct some automorphisms of $\cX_k$.
%From Witt's paper \cite{witt1936} we use  the fact that
%$$
%g(e_k)+e_k=a.
%$$
Every element
in $\K(\cX_k)$ can uniquely be written as $(a_1+a_2y)z+a_3y+a_4$ with $a_1,a_2,a_3,a_4\in \K(x)$.  Furthermore, the map
\begin{equation}
\label{eqrho}
\rho\,:\, (x,y,z)\to (g(x),g(y),z+d)
\end{equation}
is a $\K$-automorphism of $\cX_k$. From $\Tr_g(d)=1$ we have that
\begin{equation}
\label{eqiota}
\iota=\rho^{n}\,:\,(x,y,z)=(x,y,z+1).
\end{equation}
 Therefore, $\iota$ is an involution, $\bar{\cX}=\cX_k^{\iota},$ and $\rho$ generates a cyclic subgroup
${\bf{C}}_{2n}$ of $\aut(\cX_k)$ of order $2n$. Also, ${\bf{C}}_{2n}$ preserves $\cX_k$ and the $\K$-automorphism group ${\bf{C}}_{2n}/\langle \iota \rangle$ of $\bar{\cX}$ coincides with the cyclic group of order $n$ generated by $g$.

A straightforward computation involving Proposition \ref{21feb5} gives the following result.
\begin{lemma}
\label{lemwitt1}
The map $$\psi\,:\,(x,y,z)\to (\varphi(x),\varphi(y),z+d)$$ is a $\K$-automorphism of $\cX_k$.
\end{lemma}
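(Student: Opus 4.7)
The plan is to verify directly that the prescribed map $\psi$ preserves the defining Artin--Schreier relation $z^2+z+e_k=0$ of the extension $\K(\cX_k)=\K(\bar{\cX})(z)$. Since $\varphi$ is already known to be a $\K$-automorphism of $\bar{\cX}$ and since $e_k\in \K(\bar{\cX})$, the only content of the lemma is compatibility of the rule $z\mapsto z+d$ with the Artin--Schreier polynomial: one must check that
\[
\psi(z)^2+\psi(z)+\varphi(e_k)=0
\]
holds in $\K(\cX_k)$, and then argue that the resulting $\K$-algebra endomorphism is bijective.

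First I would expand, working in characteristic $2$:
\[
(z+d)^2+(z+d)+\varphi(e_k)=(z^2+z)+(d^2+d)+\varphi(e_k)=e_k+a+\varphi(e_k),
\]
using the defining relation $z^2+z=e_k$ and the abbreviation $a=d^2+d$ from \eqref{eq18marzo}. By Proposition \ref{21feb5} we have $\varphi(e_k)+e_k=a$, so the right-hand side equals $e_k+a+(e_k+a)=0$. Thus $\psi$ is a well-defined $\K$-algebra endomorphism of $\K(\cX_k)$ extending $\varphi$.

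To promote this endomorphism to an automorphism, I would verify that $\psi$ is an involution. On $\K(\bar{\cX})$ it acts as $\varphi$, which is already an involution; on the Artin--Schreier generator we compute $\psi^2(z)=\psi(z+d)=z+d+\varphi(d)=z$, since $\varphi(d)=d$. The latter follows from $d=x/\Tr_g(x)$, the identity $\varphi(x)=x$, and the $\varphi$-invariance of $\Tr_g(x)$ (which in turn is immediate from $\varphi g=g^{-1}\varphi$ and summing over the full cyclic group), as already noted in the proof of Lemma \ref{lemmadia}. Hence $\psi^2=\mathrm{id}$ and $\psi$ is bijective.

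There is no genuine obstacle at this step: the entire substance of the lemma has been absorbed into Proposition \ref{21feb5}, and the verification above is purely mechanical. The main difficulty lay upstream, in the chain of computations leading to that proposition, and in particular in identifying $\Tr_g(\xi)(\ominus P_0)=1$ inside the proof of Lemma \ref{21feb1}.
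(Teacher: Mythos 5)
Your verification is correct and is precisely the ``straightforward computation involving Proposition \ref{21feb5}'' that the paper invokes without writing out: the expansion $(z+d)^2+(z+d)+\varphi(e_k)=e_k+a+\varphi(e_k)=0$ is the whole content, and your appeal to $\varphi(d)=d$ (from Lemma \ref{lemmadia}) to get $\psi^2=\mathrm{id}$ settles bijectivity. Nothing differs in substance from the paper's intended argument.
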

%%modifica 22 marzo 2011
Next, the structure of the group generated by $\rho$ and $\psi$ is described.
\begin{proposition}
\label{propwitt} The group $S$ generated by $\rho$ and $\psi$ is isomorphic to
 $ {\bf{D}}_{2n}$.
\end{proposition}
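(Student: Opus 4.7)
The plan is to verify the defining relations of the dihedral group $\mathbf{D}_{2n}$ for the pair $(\rho,\psi)$, and then to argue that $|S|=4n$. All the analytic input has already been secured: Lemma \ref{lemwitt1}, via Proposition \ref{21feb5}, guarantees that $\psi$ is a $\K$-automorphism of $\cX_k$, so what remains is straightforward bookkeeping of how $\rho$ and $\psi$ act on $x$, $y$ and $z$.

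First I would pin down the orders of the two generators. Since $\rho|_{\K(\bar\cX)}=g=g_0^2$ has order $n$, the power $\rho^n$ fixes $x$ and $y$; iterating $\rho(z)=z+d$ gives $\rho^k(z)=z+d+g(d)+\cdots+g^{k-1}(d)$, hence $\rho^n(z)=z+\Tr_g(d)=z+1$, i.e.\ $\rho^n=\iota\neq \mathrm{id}$, while $\rho^{2n}=\mathrm{id}$. Thus $\rho$ has order exactly $2n$. For $\psi$, the elliptic involution $\varphi$ fixes $x$, so $\varphi(d)=d$ by Lemma \ref{lemmadia}; combined with $\varphi^2=\mathrm{id}$, this forces $\psi^2(z)=z+d+\varphi(d)=z$, so $\psi$ is an involution.

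Next I would check the conjugation relation $\psi\rho\psi=\rho^{-1}$. On $\K(\bar\cX)$ both sides restrict to $\varphi g\varphi=g^{-1}$, which is the same identity that produced the dihedral group $\bar D\cong\mathbf{D}_n$ on $\bar\cX$. On $z$, one first observes that $\rho^{-1}(z)=z+g^{-1}(d)$, since $\rho(z+g^{-1}(d))=z+d+g(g^{-1}(d))=z$. A direct computation, using $\varphi(d)=d$ and $\varphi g=g^{-1}\varphi$, then yields
\begin{equation*}
(\psi\rho\psi)(z)=\psi(z+d+g(d))=(z+d)+\varphi(d)+\varphi(g(d))=z+g^{-1}(d),
\end{equation*}
matching $\rho^{-1}(z)$ exactly.

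The three relations $\rho^{2n}=\mathrm{id}$, $\psi^2=\mathrm{id}$ and $\psi\rho\psi=\rho^{-1}$ realise $S$ as a quotient of $\mathbf{D}_{2n}$. To rule out that $S$ is a proper quotient, it suffices to show $\psi\notin\langle\rho\rangle$. But restriction to $\K(\bar\cX)$ sends $\langle\rho\rangle$ onto $\langle g\rangle$, whose elements act on $\bar\cX$ as translations by even multiples of $P_0$, whereas $\psi|_{\K(\bar\cX)}=\varphi$ acts as the inversion $P\mapsto\ominus P$, which is not a translation on an elliptic curve. Hence $|S|\geq 4n$, and the dihedral relations then force $|S|=4n$ and $S\cong\mathbf{D}_{2n}$. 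I do not foresee any real obstacle here; the substantive content has already been absorbed into Proposition \ref{21feb5}, and the remaining verification is purely formal.
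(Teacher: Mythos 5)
Your proof is correct and follows essentially the same route as the paper, which simply notes that $\varphi(d)=d$ makes both $\psi$ and $\psi\rho$ involutions (equivalent, given $\psi^2=1$, to your relation $\psi\rho\psi=\rho^{-1}$) and combines this with the previously established fact that $\rho$ has order $2n$. Your additional checks --- that $\rho^n=\iota\neq\mathrm{id}$ and that $\psi\notin\langle\rho\rangle$ because it restricts to the inversion $\varphi$ rather than a translation --- are details the paper leaves implicit, so this is the same argument carried out more explicitly.
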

\begin{proof} As $\varphi(d)=d$, both $\psi$ and $\psi \rho$ are involutions showing that $S\cong {\bf{D}}_{2n}$.
\end{proof}
%%modifica 22 marzo 2011
To prove the theorem below it remains to show that {$\cX_k$} is non-elliptic for some odd $k$ with $1\le k \le 2n-1$.

%%modifica 22 marzo 2011
For this purpose, the following results on the pole divisor of $e_k\in \K({\bar{\cX}})$ is useful.
\begin{lemma}\label{polesek} Let $k$ be an odd {integer} with $1\leq k \leq 2n-1$. Then
\begin{itemize}
\item[{\rm{(i)}}] every pole of $e_k$ belongs to $\cP\cup \cZ$;

\item[{\rm{(ii)}}] the point $Y_\infty$ is not a pole of $e_k$;

\item[{\rm{(iii)}}] for every $P\in \cZ$,   $v_{P}(e_k)\ge -4$;

\item[{\rm{(iv)}}] $v_{[-k]P_0}(e_k)\ge -2$ and equality holds provided that $[-k]P_0$ is not a zero of $$\sum_{v=1}^{n-1}(x^2+g(x^2)+\ldots+g^{v-1}(x^2))g^v(c_k).$$
\end{itemize}
\end{lemma}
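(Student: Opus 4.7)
The plan is to bound $v_P(e_k)$ by expressing the defining relation $e_k=\Tr_g(c_k)^{-1}\sum_{v=0}^{n-1} a_{g^v}\,g^v(c_k)$ in terms of the basic functions $x$, $\Tr_g(x)$, $d$, $a$ and $c_k$, and then carrying out a pole-order accounting at each relevant point.

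First I collect the necessary divisor data. By Lemma~\ref{tracciain}, $\div(\Tr_g(x))=2\cZ-2\cP$; hence \eqref{dividivi} together with the fact that $k$ is odd yields $\div(\Tr_g(c_k))=\div(g_0^k(\Tr_g(x)))=2\cP-2\cZ$, with the roles of $\cP$ and $\cZ$ swapped, while $g^v(c_k)=g_0^{2v+k}(x)$ has a unique pole of order $2$ at $[-(2v+k)]P_0\in \cZ$. Comparing leading terms at $Y_\infty$ shows $d(Y_\infty)=1$, so $d$ has poles only in $\cZ$ (each of order $2$), $a=d^2+d$ has pole of order $4$ at every point of $\cZ$, and $d+1$ vanishes to order $\ge 2$ at $Y_\infty$. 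Moreover at each $[2i]P_0\in \cP\setminus\{Y_\infty\}$, $\Tr_g(x)$ has a pole of order $2$ while $x$ is regular, so $d$ vanishes to order at least $2$ there, and consequently $a$ vanishes to order $\ge 2$ at every point of $\cP$. Part (i) then follows immediately: each of $\Tr_g(c_k)^{-1}$, $a_{g^v}$, and $g^v(c_k)$ has its pole locus contained in $\cP\cup \cZ$.

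For (ii), the key observation is $v_{Y_\infty}(g^i(a))=v_{[2i]P_0}(a)\ge 2$ for every $i$, which forces $v_{Y_\infty}(a_{g^v})\ge 2$. Combined with the regularity of $g^v(c_k)$ at $Y_\infty$, the numerator vanishes to order $\ge 2$, matching the order-$2$ zero of $\Tr_g(c_k)$ at $Y_\infty$; hence $e_k$ is regular there. For (iii), at a point $P\in \cZ$ one has $v_P(a_{g^v})\ge -4$ because each $g^i(a)$ has pole of order $4$ on $\cZ$, and $g^v(c_k)$ is regular at $P$ except for the unique index $v^*$ with $[-(2v^*+k)]P_0=P$, where it has pole of order $2$. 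The numerator therefore has $v_P\ge -6$, and dividing by $\Tr_g(c_k)$ (pole of order $2$ at $P$) gives $v_P(e_k)\ge -4$.

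For (iv), repeating the analysis at $P=[-k]P_0$ gives $v^*=0$, but the summand $a_{g^0}g^0(c_k)$ vanishes identically because $a_{g^0}=0$; all remaining $g^v(c_k)$ (for $v\ge 1$) are regular at $P$, so the numerator has $v_P\ge -4$, whence $v_{[-k]P_0}(e_k)\ge -2$. To identify when equality holds I would isolate the principal polar part at $[-k]P_0$: using the $g$-invariance of $\Tr_g(x)$ and the characteristic-$2$ identity $\bigl(\sum g^i(d)\bigr)^2=\sum g^i(d)^2$, the leading pole contribution of $a_{g^v}$ at $[-k]P_0$ equals $(x^2+g(x^2)+\cdots+g^{v-1}(x^2))/\Tr_g(x)^2$; summing over $v\ge 1$, the principal part of the numerator of $e_k$ equals $\Tr_g(x)^{-2}\sum_{v=1}^{n-1}(x^2+\cdots+g^{v-1}(x^2))g^v(c_k)$, and equality $v_{[-k]P_0}(e_k)=-2$ is then equivalent to the non-vanishing of this sum at $[-k]P_0$. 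The main obstacle lies in (ii) and this equality clause of (iv): one must control not just pole orders but exact orders of vanishing of $a$ on $\cP$, and correctly extract the leading polar term from a sum in which many pieces could in principle cancel; both points reduce, in the end, to the explicit description of $d$ afforded by Lemma~\ref{tracciain}.
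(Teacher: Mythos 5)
Your proof is correct and follows essentially the same route as the paper's: both rest on $\div(\Tr_g(x))=2\cZ-2\cP$ from Lemma \ref{tracciain}, the translation formula \eqref{dividivi}, and the identity $a_{g^v}=\bigl(\zeta_v^2+\zeta_v\Tr_g(x)\bigr)/\Tr_g(x)^2$ coming from $d=x/\Tr_g(x)$ and the $g$-invariance of $\Tr_g(x)$, followed by local pole-order accounting at $Y_\infty$, at points of $\cZ$, and at $[-k]P_0$. The only difference is organizational (the paper factors out $1/(\Tr_g(c_k)\Tr_g(x)^2)$ globally at the outset, whereas you track the orders of $a$, $a_{g^v}$ and $g^v(c_k)$ directly and extract the $\Tr_g(x)^{-2}$ factor only for the equality clause of (iv)), and all your intermediate claims check out.
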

\begin{proof}
%Write
%$$
%e_k=\frac{1}{\Tr_g(c_k)}\sum_{v=1}^{n-1}\left(\frac{x^2+x\Tr_g(x)}{\Tr(x)^2}+\ldots+g^{v-1}\left(\frac{x^2+x\Tr_g(x)}{\Tr(x)^2}\right)\right)g^v(c_k).
%$$
Since each $g^i$ fixes $\Tr_g(x)$, $e_k$ can be written as
$$
e_k=\frac{1}{\Tr_g(c_k)\Tr_g(x)^2}\sum_{v=1}^{n-1}\left(x^2+x\Tr_g(x)+\ldots+g^{v-1}\left(x^2+x\Tr_g(x)\right)\right)g^v(c_k).
$$
Put $f_k=\sum_{v=1}^{n-1}\left(x^2+x\Tr_g(x)+\ldots+g^{v-1}\left(x^2+x\Tr_g(x)\right)\right)g^v(c_k)$.
Every point in $\cZ$ is a zero of $\Tr_g(x)$ with multiplicity $2$, and hence is a pole of $\Tr_g(c_k)$ with multiplicity 2. Therefore,
\begin{equation}\label{divfrac}
\div\bigg(\frac{1}{\Tr_g(c_k)\Tr_g(x)^2}\bigg)=\sum_{P\in \cZ}-2P+\sum_{P\in \cP}2P.
\end{equation}
\begin{itemize}
\item[{\rm{(i)}}] Note that $f_k$ is a $\K$-linear combination of products of functions $x$ and $g_0^i(x)$, with $1\le i \le 2n$. The poles of any of these functions are contained in $\cP\cup \cZ$. Taking into account \eqref{divfrac}, the claim follows.

\item[{\rm{(ii)}}] Note that $v_{Y_\infty}(g^v(c_k))\ge 0$ for any integer $v$, whence
\begin{equation}\label{19mar11}
v_{Y_\infty}(f_k)\ge \min\{v_{Y_\infty}(g^{i}\left(x^2+x\Tr_g(x)\right)\mid i=0,\ldots, n-2\}.
\end{equation}
As $x^2+x\Tr_g(x)=x(g(x)+g^2(x)+\ldots+g^{n-1}(x))$, we have
$$
v_{Y_\infty}(g^{i}\left(x^2+x\Tr_g(x)\right)=v_{Y_\infty}(g^i(x))+v_{Y_\infty}(g^{i+1}(x)+g^{i+2}(x)+\ldots+g^{i+n-1}(x)).
$$
The point $Y_\infty$ is a pole of $g^j(x)$ only when $j=0\pmod n$. In this case, $v_{Y_\infty}(g^j(x))=-2$ holds. Therefore $v_{Y_\infty}(g^{i}\left(x^2+x\Tr_g(x)\right)\ge -2$, and hence $v_{Y_\infty}(f_k)\ge -2$ holds by \eqref{19mar11}. By  \eqref{divfrac}, the claim follows.

\item[{\rm{(iii)}}]
Let $P=[iP_0]$, with $i$ odd. Clearly $[i]P_0$ is not a pole of any $g^j(x^2+x\Tr_g(x))$. For $v=1,\ldots,n-1$,  $[i]P_0$ is a pole of $g^v(c_k)$ precisely when $v \equiv n-((k+i)/2)\pmod{n}$. As the multiplicity of $[i]P_0$ as a pole of $g^v(c_k)$ is at most $2$, the  claim follows.
\item[{\rm{(iv)}}] Arguing as in (iii),  $v_{[-k]P_0}(e_k)\ge -2$ can be shown. In fact, $[-k]P_0$ is never a pole of $g^v(c_k)$, as $v\equiv 0 \pmod n$ does not occur. Note that $[-k]P_0$ is a zero of $g^j(x\Tr_g(x))$ for each $j$. Therefore,
$$
f_k([-k]P_0)=0 \Leftrightarrow \left(\sum_{v=1}^{n-1}\left(x^2+\ldots+g^{v-1}\left(x^2\right)\right)g^v(c_k)\right)([-k]P_0)=0.
$$
Taking into account \eqref{divfrac}, the claim follows.
\end{itemize}
\end{proof}

\begin{proposition}\label{generegiusto} Assume that there exist some point $P\in \bar{\cX}$ with such that the order of $e_k$ at $P$ is equal to $-2$. Then $\iota$ fixes exactly $n$ places of $\cX$, and $\cX_k$ has genus $n+1$. Also, the $2$-rank of $\cX_k$ is equal to $n+1$.
\end{proposition}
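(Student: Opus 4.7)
The plan is to apply the Riemann-Hurwitz formula and the Deuring-Shafarevich formula to the Artin-Schreier cover $\pi\colon \cX_k \to \bar\cX$. Both statements reduce to knowing the ramification of $\pi$: if $\pi$ ramifies at exactly $r$ places of $\bar\cX$, each with different exponent $2$, then since $\bar\cX$ has genus $1$ and $2$-rank $1$, Riemann-Hurwitz gives $2\gg(\cX_k)-2 = 2r$, while the Deuring-Shafarevich formula $\gamma(\cX_k)-1 = 2(\gamma(\bar\cX)-1)+r$ yields $\gamma(\cX_k) = r+1$. Moreover $\iota$ generates the Galois group of $\pi$, so the places of $\cX_k$ fixed by $\iota$ are precisely the unique preimages of the $r$ ramified places. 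It thus suffices to show that $r = n$ and that the different exponent equals $2$ at every ramified place.

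The key first step is to bring the Artin-Schreier equation into normalized form. By Proposition \ref{squareek}, $e_k = \eta^2$ for some $\eta \in K(\bar\cX)$, so the substitution $z = w + \eta$ transforms $z^2 + z + e_k = 0$ into the equivalent Artin-Schreier equation $w^2 + w = \eta$. The hypothesis $v_P(e_k) = -2$ forces $v_P(\eta) = -1$, which is negative and odd; hence $\eta$ is already in normalized Artin-Schreier form at $P$, and the unique place of $\cX_k$ above $P$ is totally ramified with different exponent $-v_P(\eta)+1 = 2$.

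The second step is to identify the full ramification locus using the $\rho$-action. Since $\rho$ is an automorphism of $\cX_k$ projecting to $g$ on $\bar\cX$ and commuting with $\iota$, the branch locus of $\pi$ in $\bar\cX$ is $\langle g \rangle$-stable. By Lemma \ref{polesek}(i),(ii), every pole of $e_k$ lies in $\cP \cup \cZ$ and $Y_\infty \in \cP$ is not a pole; from $\eta^2 = e_k$ one also has $v_{Y_\infty}(\eta) \ge 0$, so $Y_\infty$ is unramified in $\pi$. The sets $\cP$ and $\cZ$ are each single $\langle g \rangle$-orbits of size $n$; therefore $\cP$, containing the unramified point $Y_\infty$, is entirely unramified, and the hypothesized ramified point $P$ must lie in $\cZ$. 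The $\langle g \rangle$-orbit of $P$ exhausts $\cZ$, so every point of $\cZ$ ramifies. By $\langle g\rangle$-equivariance of the cover, the different exponent is constant along $\cZ$, and therefore equals the value $2$ already computed at $P$.

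Combining yields $r = n$ and different exponent $2$ everywhere, whence $\gg(\cX_k) = \gamma(\cX_k) = n+1$ and $\iota$ fixes exactly $n$ places of $\cX_k$. The main subtlety is Step $1$: although $e_k$ has only even-order poles (being a square), a genuine Artin-Schreier ramification of minimal conductor emerges only after halving pole orders via $\eta$; once this is in place, the rest of the argument is an orbit-theoretic bookkeeping driven by Lemma \ref{polesek} and the $\langle g \rangle$-action.
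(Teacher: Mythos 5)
Your proof is correct and follows essentially the same route as the paper: both arguments reduce to showing that the branch locus of the degree-$2$ cover is exactly $\cZ$ with different exponent $2$ at every point, and then conclude via the Hurwitz and Deuring--Shafarevich formulae. The only divergence is at the points $R\in\cZ$ where $v_R(e_k)$ may equal $-4$: the paper rules out $d_{R'}=4$ by a local conductor computation using Lemma \ref{polesek}(iii) and the squareness of $e_k$, whereas you transport the different exponent $2$ from $P$ to all of $\cZ$ via the $\rho$-equivariance of the cover and the transitivity of $\langle g\rangle$ on $\cZ$ --- a slightly cleaner step that reaches the same conclusion.
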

\begin{proof}
Let $\cY_k$ be a non-singular model of $\cX_k$, so that ${\mathbf D}_{2n}$ can be viewed as an automorphism group of $\cY_k$. Let $\pi:\cY_k\to {\bar{\cX}}$ denote the covering of degree $2$ associated with the function field extension $\K(\cY_k):\K(\cY_k)^\iota$.
By the Hurwitz genus formula applied to $\pi$, the
genus of $\cX_k$ is equal to $1+\frac{1}{2}\sum d_Q$, where, as usual, $d_Q$ denotes the different exponent of a point $Q$ of $\cY_k$ with respect to $\pi$ (see e.g. \cite[Proposition 3.7.8]{STI2ed}).
Let $\cE$ be the set of points $Q$ of $\cY_k$ such that
$d_Q>0$.
%Actually, as the characteristic of $\K$ is $2$, for each place $\ccP'$ in $\cE$ we have $d(\ccP'\mid \ccP)\ge 2$.
As $\cE$ coincides with the set of points fixed by $\iota$,  $\cE$ is preserved by ${\bf{C}}_{2n}$. More precisely, as ${\bf{C}}_{2n}/\langle \iota \rangle$ coincides with $\langle g\rangle$, the set $\cE$ consists of the points of $\cY_k$ lying over a $g$-invariant set of points $\mathcal D$ of ${\bar{\cX}}$.
By  \cite[Proposition 3.7.8]{STI2ed} each point in $\cD$ is a pole of $e_k$. By Lemma \ref{polesek}, either $\cD$ is empty, or $\cD=\cZ$.
Under our assumption, we prove that the former case cannot actually occur.
Let $t$ be a local parameter at $P$. By Proposition \ref{squareek},
$$
e_k=\gs t^{-2}+e_k'
$$
with $\gs \in \K, \,\, \gs \neq 0$ and $v_{P}(e_k')=0$.
Then clearly
$$
v_{P}(e_k+(\sqrt \gs/t)^2+(\sqrt \gs/t))=-1.
$$
By
%IL RIFERIMENTO QUI E' AL LIBRO DI STICHTENOTH, SECONDA EDIZIONE
 \cite[Proposition 3.7.8(c)]{STI2ed}, $P$ is totally ramified, and if $P'$ denotes the only point in $\cX_k$ lying over $P$, then the different exponent $d_{P'}$ is equal to $2$. This proves that $\cD=\cZ$. Now let $R\in \cZ$ be such that  $v_{R}(e_k)\neq -2$, and let $R'$ be the only point in $\cE$ such that $\pi(R')=R$.  By (iii) of Lemma \ref{polesek}, $v_{R}(e_k)= -4$ holds. Then, by \cite[Proposition 3.7.8]{STI2ed}, either
$d_{R'}=4$  or $d_{R'}=2$. If $d_{R'}=4$, then there exists $\xi \in \K{(\bar{\cX}})$ with $v_{R}(e_k+\xi^2+\xi)=-3$. But this
%modifica24marzo
is impossible $e_k$ being a square.
Therefore, for each $Q\in \cE$ we have $d_{Q}=2$. Finally, as the size of $\cE$ is $n$, from the Hurwitz genus formula the genus of $\cX_k$ is equal to $n+1$. The Deuring-Shafarevich formula, see \eqref{eq2deuring},  applied to $S=\langle \iota \rangle$ shows that the $2$-rank of $\cX_k$ is equal to $n+1$ as well.
\end{proof}

\begin{proposition}
There exists a $k$ for which $\bar P=[-k]P_0$ is not a zero of $$\sum_{v=1}^{n-1}(x^2+g(x^2)+\ldots+g^{v-1}(x^2))g^v(c_k),$$
\end{proposition}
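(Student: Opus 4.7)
The plan is to proceed by contradiction. Suppose that for every odd $k$ with $1 \le k \le 2n-1$, the point $\bar P_k := [-k]P_0$ is a zero of
\[
F_k := \sum_{v=1}^{n-1}\bigl(x^2+g(x^2)+\cdots+g^{v-1}(x^2)\bigr)\,g^v(c_k).
\]
The first step is to evaluate $F_k(\bar P_k)$ explicitly. By the translation law \eqref{dividivi0}, pulling back along $g_0^j$ corresponds to translation by $[j]P_0$, so $g_0^j(\delta)(Q) = \delta(Q \oplus [j]P_0)$. With $c_k = g_0^k(x)$ and $g = g_0^2$, this gives $g^v(c_k)(\bar P_k) = x([2v]P_0)$ and $g^i(x^2)(\bar P_k) = x([2i-k]P_0)^2$. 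Interchanging the order of summation yields
\[
F_k(\bar P_k) \;=\; \sum_{i=0}^{n-2} U_i\,x([2i-k]P_0)^2, \qquad U_i := \sum_{v=i+1}^{n-1} x([2v]P_0).
\]

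The second step translates the standing hypothesis into a linear system. Writing $y_j := x([j]P_0)$ for odd $j$ (with the symmetry $y_j=y_{2n-j}$ coming from $x(R)=x(\ominus R)$), the assumption becomes $n$ linear relations, indexed by the odd $k$'s, among the values $y_j^2$. Because $k\mapsto k+2$ shifts the indices $2i-k$ by a single position, the system is circulant-like, and can be probed by finite differences. A key non-vanishing input is $U_{n-2}=x([2n-2]P_0)=x([2]P_0)\neq 0$: indeed the unique affine $2$-torsion point of $\bar\cX$ is $[n]P_0=(0,\sqrt\mu)$, and $2\not\equiv n\pmod{2n}$ for $n\geq 4$.

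The third step is to derive a contradiction. Taking the difference $F_{k+2}(\bar P_{k+2})-F_k(\bar P_k)$ telescopes the sum to a boundary expression whose dominant term is $U_{n-2}\bigl(y_{2n-k-4}^2+y_{-k}^2\bigr)$; iterating this across the $n$ values of $k$ forces linear dependencies among the $y_j^2$ incompatible with the position of the $2n$-torsion on an ordinary elliptic curve (already $y_1\ne y_3$ since $[1]P_0\neq\pm[3]P_0$ for $n\geq 4$). The delicate part of the argument, and my expected main obstacle, is the precise bookkeeping of the circulant structure: one must track how each $y_j^2$ is hit as $k$ varies and verify that the resulting system has trivial kernel modulo the symmetries $y_j=y_{2n-j}$. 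A safe fallback, should the direct telescoping become cumbersome, is to choose $\mu$ transcendental over $\F_2$: the $x$-coordinates of the $2n$-torsion points then live in a rich algebraic extension of $\F_2(\mu)$, no nontrivial $\F_2$-linear relation of the prescribed circulant form can hold among them, and the desired $k$ exists automatically; since the theorem only requires the existence of one suitable elliptic base $\bar\cX$ for each $n$, this is enough.
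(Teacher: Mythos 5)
Your Step 1 is correct and coincides with the paper's starting point: evaluating the sum at $\bar P_k=[-k]P_0$ via the translation law gives
$F_k(\bar P_k)=\sum_{i=0}^{n-2}U_i\,x([2i-k]P_0)^2$ with $U_i=\sum_{v=i+1}^{n-1}x([2v]P_0)$, and your observation that $U_{n-2}=x([2]P_0)\neq 0$ is sound. But the heart of the proposition is showing that these $n$ quantities cannot all vanish, and that is exactly the part you do not prove. Steps 2 and 3 are a plan, not an argument: the ``telescoping'' of $F_{k+2}(\bar P_{k+2})-F_k(\bar P_k)$ is never carried out, the claim that the resulting circulant system ``has trivial kernel modulo the symmetries $y_j=y_{2n-j}$'' is precisely what needs to be established, and you yourself flag it as an unresolved obstacle. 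The fallback via transcendental $\mu$ does not close the gap either: the putative relation is not an $\F_2$-linear relation among independent quantities but an algebraic identity in the coordinates of the $2n$-torsion points (the coefficients $U_i$ are themselves built from those coordinates), and ruling out that it holds identically in $\mu$ requires an actual argument of the same nature as the one you are trying to avoid.

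The paper closes the gap with a single function-theoretic observation that your setup is one step away from. Since $\K$ has characteristic $2$, your expression is a perfect square: $F_k(\bar P_k)=\theta([-k]P_0)^2$ where $\theta=\sum_{i=1}^{n-2}\beta_i\,g^i(x)$ is a \emph{fixed} rational function on $\bar\cX$, independent of $k$, with $\beta_i^2=U_i$ (the $i=0$ term drops because $U_0=0$, which follows from $x([n]P_0)=0$ and $x([2v]P_0)=x([2n-2v]P_0)$). Thus the hypothesis ``all $n$ values vanish'' says that $\theta$ vanishes at all $n$ points of $\cZ$. But $\theta$ is a $\K$-linear combination of the $g^i(x)$, $1\le i\le n-2$, whose poles are distinct points each of multiplicity $2$, so its pole divisor has degree at most $2(n-2)$; and by Lemma \ref{keybis} every zero of such a combination has even multiplicity. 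Hence $\theta$ (which is not identically zero since $\beta_{n-2}^2=x([2]P_0)\neq 0$) has at most $n-2$ distinct zeros, contradicting $n$ distinct zeros in $\cZ$. This zero-versus-pole count is the missing idea; without it, or a completed version of your circulant analysis, the proof is incomplete.
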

\begin{proof}
Let $\zeta_v=x+g(x)+\ldots+g^{v-1}\left(x\right)$,
and consider the rational function $\epsilon$ defined as follows:
$$
\gep(P)=\zeta_1(P)^2\cdot x([2]P_0)+\zeta_2(P)^2\cdot x([4]P_0)+\ldots +\zeta_{n-1}(P)^2\cdot x([2n-2] P_0).
$$
As $g^v(g_0^k(x))(\bar P)$ is  the $x$-coordinate of $ [2v]P_0$,
$$
\gep(\bar P)=\left(\sum_{v=1}^{n-1}\left(x^2+\ldots+g^{v-1}\left(x^2\right)\right)g^v(c_k)\right)(\bar P).
$$

Therefore, to prove the existence of  a suitable $k$
it will be enough to show that $\epsilon$
 has less than $n$ distinct zeros in $\mathcal Z$. Note that the values of $x([2v]P_0)$ are independent of $P$, and therefore can be viewed as constants. Let $\alpha_v\in \K$ be the square root of $x([2v] P_0)$. Then $\epsilon(P)=\theta(P)^2$, where
$$
\theta(P)=\zeta_1(P)\cdot \alpha_1+\zeta_2(P)\cdot \alpha_2+\ldots +\zeta_{n-1}(P)\cdot \alpha_{n-1}.
$$
We will prove that
 $\theta$ has less than $n$ distinct zeros in $\mathcal Z$. Expanding $\zeta_i(P)$ gives
\begin{eqnarray*}
\theta(P)&=&x(P)\alpha_1+(x(P)+ x(P\oplus [2] P_0))\alpha_2+\\ & & +(x(P)+ x(P\oplus [2] P_0)+x(P \oplus [4]P_0))\alpha_3+\\
& &+\ldots+(x(P)+\ldots+x(P\oplus [2n-4] P_0))\alpha_{n-1}.
\end{eqnarray*}
Note that $\alpha_{n/2}=0$ and that $\alpha_v=\alpha_{n-v}$. This depends on $[n]P_0=(0,\sqrt \mu)$  and on $[2v]P_0$ being the opposite of $[2n-2v]P_0$.  Therefore
$\alpha_1+\ldots+\alpha_{n-1}=0$, and hence there exist constants $\beta_i\in \K$ with
$$
\theta(P)=x(P\oplus [2]P_0)\cdot \beta_1+\ldots+x(P\oplus [2n-4]P_0))\cdot \beta_{n-2}.
$$
As $x(P\oplus [2i]P_0)=g^i(x)(P)$,  $\theta$ is a linear combination of some $g^i(x)$'s.
Clearly,
the only poles of $\theta$ are  points $[2n-2v]P_0$ for which $\beta_v\neq 0$. Each of those poles has multiplicity $2$. Then the number of zeros of $\theta$ is at most $2(n-2)$. By Lemma \ref{keybis}, each zero of $\theta$ has even multiplicity. If  $[-k]P_0$ is a zero of $\theta$ for each $k$, then the number of zeros is  larger than $2n-4$, which is a contradiction.
\end{proof}

Taking into account (iv) of Lemma \ref{polesek} together with Proposition \ref{generegiusto}, this ends the proof of the following result.
\begin{theorem}
\label{princel2} For every $n=2^h\ge 8$, some of the above bielliptic curves {$\cX_k$}  is of genus $\gg=n+1\geq 2$ and it has a dihedral $\K$-automorphism group $S$ such that $|S|=4(\gg-1)$. Furthermore, $\gamma=\gg$ and the (unique) central involution in $S$ fixes some points of $\cX$ and hence it is not inductive.
\end{theorem}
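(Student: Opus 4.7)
The plan is to combine the final two propositions with Proposition \ref{propwitt} and the structure of the dihedral group $\mathbf{D}_{2n}$.

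First, I would invoke the preceding proposition to fix an odd $k$ with $1\le k \le 2n-1$ such that $\bar P = [-k]P_0$ is not a zero of $\sum_{v=1}^{n-1}(x^2+g(x^2)+\ldots+g^{v-1}(x^2))g^v(c_k)$. Then part (iv) of Lemma \ref{polesek} yields $v_{\bar P}(e_k)=-2$ exactly; this supplies a point of $\bar{\cX}$ where $e_k$ has pole order precisely $2$, so the hypothesis of Proposition \ref{generegiusto} is satisfied. Applying that proposition gives at once that the involution $\iota$ fixes exactly $n$ places of $\cX_k$, that $\cX_k$ has genus $\gg=n+1$, and that the $2$-rank equals $n+1=\gg$.

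Next I would identify the automorphism group and its central involution. By Proposition \ref{propwitt} the subgroup $S=\langle\rho,\psi\rangle$ of $\aut(\cX_k)$ is dihedral of order $2\cdot 2n = 4n$, and since $\gg=n+1$ this is exactly $4(\gg-1)$, confirming the Nakajima bound is attained. Since $n=2^h\ge 8$, the dihedral group $\mathbf{D}_{2n}$ has cyclic part of even order $2n$, whose unique involution is $\rho^n=\iota$; this is precisely the (unique) central involution of $S$. By Proposition \ref{generegiusto}, $\iota$ fixes $n\ge 8$ places of $\cX_k$, so it fixes points of the curve.

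Finally, to see that this central involution is not inductive I would appeal directly to the definition given in the introduction: the quotient curve $\cX_k/\langle \iota\rangle$ is $\bar{\cX}$, which is the elliptic base curve, hence has genus $\bar\gg = 1$. Thus case (A) of the classification preceding the definition of an inductive involution applies, not case (D), so $\iota$ is not an inductive central involution of $S$. This completes the list of assertions in the theorem.

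The only non-routine step is verifying that $v_{\bar P}(e_k) = -2$ for some $k$, but this has already been reduced to the preceding proposition via Lemma \ref{polesek}(iv), so the argument becomes a short assembly of the previous results.
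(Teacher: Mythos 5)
Your proposal is correct and follows essentially the same route as the paper, which concludes Theorem \ref{princel2} precisely by combining the existence proposition for a suitable $k$ with Lemma \ref{polesek}(iv), Proposition \ref{generegiusto}, and Proposition \ref{propwitt}. Your explicit identification of $\iota=\rho^n$ as the unique central involution and the observation that the quotient $\cX_k/\langle\iota\rangle=\bar{\cX}$ is elliptic (case (A), hence non-inductive) is exactly the intended assembly.
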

%%%modifica 23 marzo: carattere di X_k
%%%%%%%%%%%%%%%%%%%%%%%%%%%%%%%%%%%%%%%%%%%%%%%%%%%%%%%%%%%%%%%%%%%%%%%%%%%%%%%%%%%%%
\subsection{Some more examples} {}From Theorem \ref{princel2}, the question arises whether curves other than $\cX_k$ can provide examples  for case (1) of Theorem {\ref{princ}}. To construct such a curve, a different choice for $d$ in (\ref{eq18marzo}) is necessary. The possibilities are described in the following result.
\begin{lemma}
\label{lemwitt} For $d\in\K(\bar{\cX})$ with $\Tr_g(d)=1$, $a=d^2+d$, $c\in \K(\bar{\cX})$ with $\Tr_g(c)\neq 0$, let $e$ be as defined in \eqref{eqwitt22marzo}.
%modifica 23 marzo: aggiustata punteggiatura e cambiata lettera r
Assume that  $\varphi(a)=a$ and $\varphi(c)=g(c)$. Then $\varphi(e)+e=a$, and either
\begin{itemize}
\item[\rm(i)] $d=\frac{x}{\Tr_g(x)}+\delta,$ or
\item[\rm(ii)]
$d=\frac{y}{x}+\left(\Tr_g\left(\frac{y}{x}\right)+1\right)\frac{x}{\Tr(x)}+\delta,$
\end{itemize}
with $\Tr_g(\delta)=0$ and $\delta\in \K(x)$.
 \end{lemma}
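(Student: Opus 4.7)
The statement bundles two assertions: the Artin--Schreier identity $\varphi(e)+e=a$, and the classification of those $d$ (with $\Tr_g(d)=1$) for which $a=d^2+d$ is $\varphi$-invariant. My plan is to derive the identity by a direct reindexing computation that fully exploits the hypothesis $\varphi(c)=g(c)$; this short-circuits the chain of Lemmas \ref{21feb1}--\ref{21feb3} needed for Proposition \ref{21feb5} with the more rigid choice $c_k=g_0^k(x)$. The classification of $d$ then follows from inspecting $\varphi(d)-d$.

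For the identity, I first observe that $\Tr_g(a)=\Tr_g(d)^2+\Tr_g(d)=0$, so Lemma \ref{nuovissimo} applies to $\xi=a$. Applying $\varphi$ termwise to the definition of $e$, the relation $\varphi g^v=g^{-v}\varphi$ together with $\varphi(c)=g(c)$ yields $\varphi(g^v(c))=g^{-v+1}(c)$ and $\varphi(\Tr_g(c))=\Tr_g(c)$, while Lemma \ref{nuovissimo}(iii) rewrites $\varphi(a_{g^v})$ as $a_{g^{-v+1\pmod n}}+a$. Since $v\mapsto -v+1\pmod n$ permutes $\{0,\ldots,n-1\}$, the reindexed sum $\sum a_{g^w}g^w(c)$ recovers $\Tr_g(c)\cdot e$ while $\sum g^w(c)$ recovers $\Tr_g(c)$, giving $\varphi(e)=e+a$.

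For the classification, the hypothesis $\varphi(a)=a$ reads $(\varphi(d)+d)^2+(\varphi(d)+d)=0$, forcing $\varphi(d)+d\in\mathbf{F}_2$ and producing two subcases. The structural input in both is that the fixed field of $\varphi$ is $\K(x)$ and that $\varphi$ commutes with $\Tr_g$ (as in Lemma \ref{21feb1}), so $\Tr_g(x)$ and $\Tr_g(y/x)$ lie in $\K(x)$. If $\varphi(d)=d$, then $d\in\K(x)$ and $\delta:=d-x/\Tr_g(x)$ is automatically in $\K(x)$ with $\Tr_g(\delta)=1-1=0$, yielding (i). If $\varphi(d)=d+1$, then using $\varphi(y/x)=1+y/x$ one verifies that $\delta:=d-y/x-(\Tr_g(y/x)+1)x/\Tr_g(x)$ is $\varphi$-invariant, hence in $\K(x)$, and that its $\Tr_g$-value simplifies to $0$, yielding (ii).

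The only step requiring real care is the index shift $v\mapsto -v+1\pmod n$ in the first paragraph and the correct tracking of the extra $+a$ contributions from Lemma \ref{nuovissimo}(iii); the classification is then essentially bookkeeping, and no substantive obstacle is anticipated.
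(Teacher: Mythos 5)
Your proposal is correct and follows essentially the same route as the paper: the identity $\varphi(e)+e=a$ via Lemma \ref{nuovissimo}(iii) together with the reindexing $v\mapsto -v+1\pmod n$ (using $\varphi(g^v(c))=g^{-v+1}(c)$ and $\varphi(\Tr_g(c))=\Tr_g(c)$), and the classification of $d$ via the fact that the fixed field of $\varphi$ is $\K(x)$. The only cosmetic difference is in the second part, where the paper splits on $\varphi(d)=d$ versus $\varphi(d)\neq d$ and extracts $\omega_1=1/x$ from $d=\omega_1y+\omega$ with $d^2+d\in\K(x)$, whereas you split on $\varphi(d)+d\in\{0,1\}$ and verify directly that the candidate $\delta$ is $\varphi$-invariant; these are equivalent.
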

\begin{proof} Since $\Tr_g(c)\neq 0$, we have that $g(c)\neq c$.
{}From $\varphi g= g^{-1} \varphi$,
$$\varphi(\Tr_g(c))=g^{-1}(\Tr_g(\varphi(e)))=g^{-1}\,\Tr_g(g(c))=g^{-1}(\Tr_g(c))=\Tr_g(c).$$
%modifica 23 marzo: dimostrazione abbreviata tenendo conto di lemmi precedenti
By (iii) of Lemma \ref{nuovissimo},
\begin{eqnarray*}
\varphi(e)+e &=& \frac{1}{\Tr_g(c)} \bigg( \sum_{v=0}^{n-1}a_{g^v}g^v(c)+  \sum_{v=0}^{n-1}(a+a_{g^{-v+1\pmod n}})g^{-v+1}(c) \bigg)\\
&=& \frac{1}{\Tr_g(c)} \sum_{v=0}^{n-1}ag^{-v+1}(c)=a\frac{\Tr_g(c)}{\Tr_g(c)}=a.
\end{eqnarray*}
\begin{comment}
Furthermore,
\begin{eqnarray*}
\varphi(e)&=&\frac{1}{\Tr_g(c)}\,\sum_{v=0}^{n-1}\varphi(a_{g^v})\varphi(g^v(c))=\\
&&\frac{1}{\Tr_g(c)}\,\sum_{v=0}^{n-1} (\varphi(a)+g^{-1}\varphi(a)+\ldots+g^{-(v-1)}\varphi(a))g^{-v}(\varphi(c))=\\
&&\frac{1}{\Tr_g(c)}\,\sum_{t=0}^{n-1} (\varphi(a)+g^t\varphi(a)+\ldots+g^{n-1}\varphi(a))g^t\varphi(c)=\\
&&\frac{1}{\Tr_g(c)}\,\sum_{t=0}^{n-1} (a+g^t(a)+\ldots+g^{n-1}(a))g^t\varphi(c).
\end{eqnarray*}
Therefore,
\begin{eqnarray*}
\varphi(e)+e=\frac{1}{\Tr_g(c)}\,(a+\Tr_g(a))\sum_{v=0}^{n-1}g^v(c)=
a\,\sum_{v=0}^{n-1}g^v(c)=a\,\frac{\Tr_g(c)}{\Tr_g(c)}=a.
\end{eqnarray*}
We show that $\Tr_g(x)\neq 0$. Assume on the contrary that $\Tr_g(x)=0$, that is,
$x+g(x)+\ldots + g^{n-1}(x)=0.$ Since $x$ has a unique pole, namely $Y_\infty$, this implies that $Y_\infty$ is also the unique pole of some other $g^i(x)$ with $1\leq i \leq n-1$. But then $g^i(Y_\infty)=Y_\infty$, a contradiction as no non-trivial element of ${\bf{C}}_n$ fixes a point on $\bar{\cX}$.
\end{comment}
It has been already noticed that $\Tr_g(x/\Tr_g(x))=1$. Hence, if $d\in \K(x)$ then $\delta=d+(x/\Tr_g(x))$ has zero relative trace. Here $\delta\in \K(x)$ because $x/\Tr_g(x)\in \K(x)$.

To show the last assertion, observe that
\begin{equation}
\label{witteq3}
d^2+d\in \K(x).
\end{equation}
Assume that $\varphi(d)\neq d$. Then  $d\not\in \K(x)$ and $d=\omega_1y+\omega$ with $\omega_1,\omega\in \K(x)$ and $\omega_1\neq 0$. {}From (\ref{witteq3})
$$\omega_1^2y^2+\omega_1y+\omega^2+\omega=\omega_1^2(xy+x^3+\mu)+\omega_1y+\omega^2+\omega=\omega_1(\omega_1x+1)y+\omega_1^2(x^3+\mu)+\omega^2+\omega$$
belongs to $\K(\cX)$, whence $\omega_1=1/x$. Observe that
$$\varphi\left(\Tr_g\left(\frac{y}{x}\right)\right)=\Tr_g\left(\varphi\left( \frac{y}{x}\right)\right)=\Tr_g\left(\frac{x+y}{x}\right)=\Tr_g\left(\frac{y}{x}+1\right)=\Tr_g\left(\frac{y}{x}\right).$$
This shows that  $$\Tr_g\left(\frac{y}{x}\right)\in \K(x).$$
Hence
$$\left(\Tr_g\left(\frac{y}{x}\right)+1\right)\frac{x}{\Tr_g(x)}\in \K(x).$$
%$$\left(\Tr_g\left(\frac{y}{x}\right)+1\right)\frac{x}{\Tr_g(x)}\right)\in \K(x).$$
Since $\Tr_g(d)=1$ and $$\Tr_g\left(\frac{y}{x}+\left(\Tr_g\left(\frac{y}{x}\right)+1\right)\frac{x}{\Tr_g(x)}\right)=1,$$
the assertion follows.

%Since $\Tr_g(d)^2+\Tr_g(d)=1$, this implies that
%$$ \Tr_g(a_1^2y^2+a_1y)+\Tr_g(a_2^2+a_2)=\Tr_g(a_1^2(xy+f))+a_1y)+\Tr_g(a_2^2+a_2)$$
\end{proof}

%%%%%%%%%%%%%%%%%%%%%%%%%%%%%%%%%%%%%%%%%%%%%%%%%%%%%%%%%%%%%%%%%%%%%%%%%%%%%%%%%%%%%%%%%%%%%%%%%%%%%%%%%%%%%%%%%%%%%%%%%%%%%%%

\section{Preliminaries to the proof of Theorem \ref{princ1}}\label{sec2}
%MODIFICA 21 APRILE 2011 - COSE GIA' DETTE  A INIZIO ARTICOLO
%In this section, $\cX$ is a projective, non-singular, geometrically
%irreducible, algebraic curve of genus $\gg\geq 2$ defined over an algebraically
%closed field $\K$ of characteristic $2$; $\K(x,y)$ is the function field of transcendency degree one associated with $\cX$;
%$\aut(\cX)$ is the $\K$-automorphism group of $\cX$.
In this section,  $S$ is a $2$-subgroup of $\aut(\cX)$, that is, a $\K$-automorphism group of $\cX$ whose order is a power of $2$.

The subfield $\K(\cX)^S$ consisting of all elements of $\K(\cX)$
fixed by every element in $S$, also has transcendency degree one
over $\K$. Let $\cY$ be a non-singular model of
% modifica 23 marzo
$\K(\cX)^S$, that is,
a projective, non-singular, geometrically irreducible, algebraic
curve with function field $\K(\cX)^S$. Sometimes, $\cY$ is called the
quotient curve of $\cX$ by $S$ and denoted by $\cX/S$. The
covering $\cX\mapsto \cY$ has degree $|S|$ and the field extension
$\K(\cX)/\K(\cX)^S$ is  Galois.

Let $\bar{P_1},\ldots,\bar{P_k}$ be the points of the quotient curve $\bar{\cX}=\cX/S$ where the cover $\cX/\bar{\cX}$ ramifies. For $1\leq i\leq k$, let $L_i$ denote the set of points of $\cX$ which lie {over} $\bar{P_i}$. 
In other words, $L_1,\ldots,L_k$ are the short orbits of
%modifica 23 marzo
$S$ on its faithful action on $\cX$. Here the orbit of $P\in \cX$
$$o(P)=\{Q\mid Q=P^g,\, g\in S\}$$ is {\em long} if $|o(P)|=|S|$, otherwise $o(P)$ is {\em short}. It may be that $S$ has no short orbits. This is the case if and only if every non-trivial element in $S$ is fixed--point-free on $\cX$. On the other side, $S$ has a finite number of short orbits.

If $P$ is a point of $\cX$, the stabilizer $S_P$ of $P$ in $S$ is
the subgroup of $S$ consisting of all elements fixing $P$. For a
non-negative integer $i$, the $i$-th ramification group of $\cX$
at $P$ is denoted by $S_P^{(i)}$ (or $S_i(P)$ as in \cite[Chapter
IV]{serre1979})  and defined to be
$$S_P^{(i)}=\{g\mid \ord_P(g(t)-t)\geq i+1, g\in
S_P\}, $$ where $t$ is a uniformizing element (local parameter) at
$P$. Here $S_P^{(0)}=S_P^{(1)}=S_P$. Furthermore,
for $i\geq 1$, $S_P^{(i)}$ is a normal subgroup of $S_P$ and the
factor group $S_P^{(i)}/S_P^{(i+1)}$ is an elementary abelian
$p$-group. For $i$ big enough, $S_P^{(i)}$ is trivial.

Let $\bar{\gg}$ be the genus of the quotient curve $\bar{\cX}=\cX/S$. The Hurwitz
genus formula  gives the following equation
    \begin{equation}
    \label{eq1}
2\gg-2=|S|(2\bar{\gg}-2)+\sum_{P\in \cX} d_P.
    \end{equation}
    where
\begin{equation}
\label{eq1bis}
%modifica 23 marzo
d_P= \sum_{i\geq 0}(|S_P^{(i)}|-1).
\end{equation}

Let $\gamma$ be the $2$-rank of $\cX$, see \cite[Section 6.7]{hirschfeld-korchmaros-torres2008}. It is known that $\gamma\leq {\gg}$. If equality holds then $\cX$ is
%modifica 23 marzo
a {\em{general}} curve, see
\cite[Theorem 6.96]{hirschfeld-korchmaros-torres2008} and \cite{frey1979}.
Let $\bar{\gamma}$ be the $2$-rank of the quotient curve $\bar{\cX}=\cX/S$.
The Deuring-Shafarevich formula, see \cite{sullivan1975} or \cite[Theorem 11,62]{hirschfeld-korchmaros-torres2008}, states that
\begin{equation}
    \label{eq2deuring}
    %modifica 23 marzo
\gamma-1={|S|}(\bar{\gamma}-1)+\sum_{i=1}^k (|S|-\ell_i)
    \end{equation}
    %%modifica 2 marzo 2009
where $\ell_1,\ldots,\ell_k$ are the sizes of the short orbits of $S$.

%modifica 23 marzo
Besides the Hurwitz and the Deuring-Shafarevich formulae which are our main tools from Algebraic geometry, we also need some technical results.
\begin{proposition}
\label{secondram} Assume that $S$ fixes the point $P\in \cX$. Let $i\geq 2$ be the smallest integer
for which the $i^{th}$ ramification group $S_P^{(i)}$ of $S$ at $P$ is trivial. If $S$ has order $2$, then $i$ is even.
\end{proposition}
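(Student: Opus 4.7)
The plan is to reduce to a direct local computation with the involution $\sigma$ generating $S$, using a uniformizer $t$ at $P$ and the identity $\sigma^2 = \operatorname{id}$.

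First I would translate the hypothesis into concrete terms. Since $|S|=2$, we have $S=\{1,\sigma\}$ and $S_P=S$. By the definition of the ramification filtration, $\sigma\in S_P^{(j)}$ if and only if $\operatorname{ord}_P(\sigma(t)-t)\geq j+1$, so the smallest $i\geq 2$ with $S_P^{(i)}=\{1\}$ is exactly $i=\operatorname{ord}_P(\sigma(t)-t)$. Writing
$$\sigma(t) = t + \phi(t), \qquad \phi(t) = a t^i + (\text{higher order terms}) \in \K[[t]], \quad a \neq 0,$$
the task becomes to show that this integer $i$ is even.

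Next I would exploit $\sigma^2=\operatorname{id}$. Applying $\sigma$ to the identity $\sigma(t)=t+\phi(t)$ and noting that $\sigma$ fixes every coefficient of $\phi$, one obtains
$$\sigma^2(t) = \sigma(t) + \phi(\sigma(t)) = t + \phi(t) + \phi(\sigma(t)),$$
and hence $\phi(\sigma(t)) = \phi(t)$ in characteristic $2$. Then I would compute the leading term of $\phi(\sigma(t))-\phi(t)$ via the binomial expansion applied to each summand $a_k(\sigma(t)^k - t^k)$. The $k=i$ summand yields the contribution $i\,a^2 t^{2i-1}$, and every other contribution is of strictly higher order in $t$ (this is where the assumption $i\geq 2$ enters).

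The identity $\phi(\sigma(t))=\phi(t)$ therefore forces $i\,a^2 = 0$ in $\K$. Since $a\neq 0$ and $\operatorname{char}\K = 2$, this yields $i\equiv 0\pmod 2$, as desired. The only delicate point in carrying out this plan is the bookkeeping of the binomial expansion in characteristic $2$: one must verify that no summand $a_k(\sigma(t)^k-t^k)$ with $k>i$ contributes to the coefficient of $t^{2i-1}$, which uses $i\geq 2$ in an essential way. This is straightforward but should be done carefully to ensure that the leading-term identification $i\,a^2 t^{2i-1}$ is correct.
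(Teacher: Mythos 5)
Your argument is correct, but it takes a genuinely different route from the paper's. The paper notes that for $|S|=2$ the extension $\K(\cX)/\K(\cX)^S$ is an Artin--Schreier extension, invokes Stichtenoth's Proposition 3.7.8(c) to conclude that the different exponent $d_P$ is even, and then reads off $i=d_P$ from the formula $d_P=\sum_{j\ge 0}\bigl(|S_P^{(j)}|-1\bigr)$, since each nontrivial $S_P^{(j)}$ contributes exactly $1$ and the filtration is decreasing. You instead give the classical self-contained local computation: writing $\sigma(t)=t+\phi(t)$ with $\phi(t)=at^{i}+\cdots$ in the completion $\widehat{\cO}_P\cong\K[[t]]$, the relation $\sigma^{2}=\mathrm{id}$ forces $\phi(\sigma(t))=\phi(t)$ in characteristic $2$, and comparing coefficients of $t^{2i-1}$ yields $ia^{2}=0$, hence $2\mid i$. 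The bookkeeping you flag does check out: for $k>i$ the summand $a_k\bigl(\sigma(t)^{k}-t^{k}\bigr)$ has order at least $k+i-1\ge 2i$, and the $j\ge 2$ terms of the $k=i$ summand have order at least $3i-2>2i-1$; both estimates use $i\ge 2$, which is guaranteed because $S_P^{(1)}=S_P$ in this wildly ramified situation. What your proof buys is independence from the structure theory of Artin--Schreier extensions --- it is essentially the local-fields argument that the ramification break of an order-$p$ automorphism is prime to $p$ --- at the cost of being longer than the paper's two-line citation.
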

\begin{proof}
%modifica 23 marzo: abbreviata la dimostrazione
Since $S$ has order two, $\cX$ is a double cover of the quotient curve $\bar{\cX}=\cX/S$. Hence, $\K(\cX)$ is an Artin-Schreier extension of $\K(\cX)^S=\K(\bar{\cX})$. By (c) of Lemma 3.7.8 in \cite{STI2ed}, the different exponent $d_P$ is even. Then the claim follows from \eqref{eq1bis}.
\end{proof}
\begin{proposition}
\label{zeroprank} If $\gamma=0$, then $S$ has a (unique) fixed point.
\end{proposition}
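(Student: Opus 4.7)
The plan is to apply the Deuring-Shafarevich formula \eqref{eq2deuring} under the hypothesis $\gamma=0$ and exploit the divisibility of short orbit sizes by $|S|$.

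First I would note that since any short orbit $L_i$ satisfies $\ell_i = |S|/|S_P|$ for any $P \in L_i$ with $|S_P|>1$, each $\ell_i$ divides $|S|$ and is strictly smaller than $|S|$, hence $\ell_i \le |S|/2$, equivalently
\[
|S|-\ell_i \ge |S|/2 \qquad (1 \le i \le k).
\]
Substituting $\gamma=0$ in \eqref{eq2deuring} and rearranging gives
\[
|S|-1 \;=\; |S|\,\bar\gamma + \sum_{i=1}^{k}(|S|-\ell_i).
\]

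Now I would read off constraints from this equation. The right-hand side is a non-negative integer bounded below by $|S|\bar\gamma$; since the left-hand side is $|S|-1<|S|$, we must have $\bar\gamma=0$. Assuming $|S|\ge 2$ (otherwise the claim is trivial), the right-hand side reduces to $\sum_{i=1}^k (|S|-\ell_i)$, and each summand is at least $|S|/2$. If $k\ge 2$, then the sum is at least $|S|$, contradicting $|S|-1<|S|$. Hence $k\le 1$, and since the left-hand side is positive we must have $k=1$. This forces $|S|-\ell_1=|S|-1$, i.e.\ $\ell_1=1$, so the unique short orbit consists of a single fixed point.

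The uniqueness of the fixed point then follows immediately: every fixed point of $S$ is a short orbit of size $1$, but there is only one short orbit in total. The main (and essentially only) technical ingredient is the divisibility/bound $|S|-\ell_i\ge |S|/2$; once that is in hand, everything else is arithmetic on \eqref{eq2deuring}.
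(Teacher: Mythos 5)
Your argument is correct. Note that the paper itself does not prove Proposition \ref{zeroprank}; it only cites \cite{gktrans}, \cite{gklondon} and \cite[Section 11.15]{hirschfeld-korchmaros-torres2008} for a proof. What you give is the standard Deuring--Shafarevich argument, which is essentially the one found in those references: setting $\gamma=0$ in \eqref{eq2deuring} yields $|S|-1=|S|\bar\gamma+\sum_{i=1}^k(|S|-\ell_i)$, and since every short orbit size is a proper divisor of $|S|$ (so $|S|-\ell_i\ge |S|/2$), one is forced to $\bar\gamma=0$, $k=1$ and $\ell_1=1$, i.e.\ a single fixed point, with uniqueness immediate because any further fixed point would be a second short orbit. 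The only caveat is the degenerate case $|S|=1$ (where the statement is vacuous in the paper's context, since $|S|\ge 8$ throughout), which you correctly set aside. So your proposal is a complete, self-contained proof of a statement the paper leaves to the literature.
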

For a proof, see \cite{gktrans}{;} see also \cite[Section 11.15]{hirschfeld-korchmaros-torres2008} and \cite{gklondon}.
\begin{proposition}[Nakajima's bound]
\label{nakajimabound} If $\gamma=1$, then $|S|\leq 4(\gg-1)$ and if $\gamma\geq 2$ then $|S|\leq 4(\gamma-1)$.
\end{proposition}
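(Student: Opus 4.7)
The plan is to combine the Deuring--Shafarevich formula \eqref{eq2deuring} with the Hurwitz genus formula \eqref{eq1}, analyzing cases according to the $2$-rank $\bar{\gamma}$ of the quotient curve $\bar{\cX}=\cX/S$. A fact I will use repeatedly is that, since $|S|$ is a power of $2$, every short-orbit length $\ell_i=|S|/|S_{P_i}|$ is a proper divisor of $|S|$ and hence satisfies $\ell_i\leq |S|/2$; in particular every summand $|S|-\ell_i$ in \eqref{eq2deuring} is at least $|S|/2$.

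For the first bound I assume $\gamma\geq 2$, so that \eqref{eq2deuring} reads
$$\gamma-1=|S|(\bar{\gamma}-1)+\sum_{i=1}^{k}(|S|-\ell_i).$$
If $\bar{\gamma}\geq 2$ then $\gamma-1\geq |S|$; if $\bar{\gamma}=1$ then $\gamma>1$ forces at least one short orbit and yields $|S|\leq 2(\gamma-1)$. The delicate subcase is $\bar{\gamma}=0$, where $\gamma-1=|S|(k-1)-\sum \ell_i$. A quick comparison using $\ell_i\leq |S|/2$ forces $k\geq 2$, and $k\geq 3$ yields $|S|\leq 2(\gamma-1)$ at once. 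For $k=2$ the equality $\ell_1=\ell_2=|S|/2$ would contradict $\gamma\geq 2$, so at least one $\ell_i$ must drop to $|S|/4$ or less, whence $\gamma-1\geq |S|/4$ and $|S|\leq 4(\gamma-1)$.

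For the second bound I set $\gamma=1$, so that the left-hand side of \eqref{eq2deuring} vanishes, forcing $\bar{\gamma}\in\{0,1\}$. When $\bar{\gamma}=1$ there are no short orbits and Hurwitz reduces to $2\gg-2=|S|(2\bar{\gg}-2)$; since $\gg\geq 2$, necessarily $\bar{\gg}\geq 2$ and $|S|\leq \gg-1$. When $\bar{\gamma}=0$, the same orbit-counting pins down exactly $k=2$ short orbits with $\ell_1=\ell_2=|S|/2$, so every point in either orbit has stabilizer of order $2$. Then by Proposition \ref{secondram} the different exponent at such a point is an even integer $j_i\geq 2$, and Hurwitz becomes
$$2\gg-2=|S|(2\bar{\gg}-2)+\tfrac{|S|}{2}(j_1+j_2).$$
The subcase $\bar{\gg}\geq 1$ gives $|S|\leq \gg-1$ immediately. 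In the subcase $\bar{\gg}=0$ one rewrites the equation as $\gg-1=\tfrac{|S|}{4}(j_1+j_2-4)$ and uses the parity of $j_1,j_2$ together with $\gg\geq 2$ to force $j_1+j_2\geq 6$, giving $|S|\leq 2(\gg-1)$ and in particular $|S|\leq 4(\gg-1)$.

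The main obstacle I anticipate is not deep but lies precisely in the last subcase ($\gamma=1$, $\bar{\gamma}=0$, $\bar{\gg}=0$): here one must blend the orbit-counting from Deuring--Shafarevich with the parity information of Proposition \ref{secondram} to exclude $j_1=j_2=2$ whenever $\gg\geq 2$. Once this is done, every remaining case falls out from the bookkeeping sketched above, and the bound $|S|\leq 4(\gamma-1)$ (respectively $|S|\leq 4(\gg-1)$) follows in all cases.
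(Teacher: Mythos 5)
Your argument is correct, but note that the paper does not prove this proposition at all: it simply cites Nakajima's original article and \cite[Theorem 11.84]{hirschfeld-korchmaros-torres2008}, so there is no internal proof to compare against. What you have written is a sound self-contained reconstruction in the spirit of Nakajima's own argument: the Deuring--Shafarevich bookkeeping (using that each short-orbit length is a proper divisor of $|S|$, hence at most $\ha|S|$) disposes of the case $\gamma\geq 2$ entirely, and the only genuinely delicate point is the one you flag, namely $\gamma=1$, $\bar{\gamma}=0$, $\bar{\gg}=0$, where Deuring--Shafarevich pins down exactly two short orbits of length $\ha|S|$ and you must rule out both different exponents being equal to $2$. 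Your use of Proposition \ref{secondram} there is exactly right: each stabilizer has order $2$, so the different exponent at a ramified point equals the smallest $i$ with trivial $S_P^{(i)}$ and is therefore even and at least $2$; the value $j_1=j_2=2$ forces $\gg=1$, contradicting $\gg\geq 2$, whence $j_1+j_2\geq 6$ and $|S|\leq 2(\gg-1)$. All the remaining subcases give the stronger bounds $|S|\leq \gg-1$ or $|S|\leq 2(\gamma-1)$ as you state, so the proposition follows. The one cosmetic remark is that your divisor argument in the $\gamma\geq 2$, $k=2$ subcase should spell out that a divisor of $|S|$ strictly less than $\ha|S|$ is at most $\qa|S|$ (which you implicitly use); with that made explicit the proof is complete.
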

For a proof, see \cite{nakajima1987}; see also \cite[Theorem 11.84]{hirschfeld-korchmaros-torres2008}.
%Results on $\K$-automorphism groups related with $p$-ranks are found \cite[Chapter
%6.7]{hirschfeld-korchmaros-torres2008}, see also \cite{achterglasspries2008}, and \cite{glass2007}.

%modifica24marzo2011
{}Our main tool from Group theory is Suzuki's characterization of dihedral and semi-dihedral $2$-groups, see \cite[Lemma 4]{suzuki1951}. We stress  that the dihedral group ${\bf{D}}_n$ of order $2n=2^{m+1}$ with $m\geq 3$, as well as the semi-dihedral group ${\bf{SD}}_n$ group of the same order, are generated by an element $g$ of order $2^m$ together with an involution $h$. But the relation linking $g$ and $h$ is $hgh=g^{-1}$ in ${\bf{D}}_n$, while it is $hgh=g^{2^{m-1}-1}$ in ${\bf{SD}}_n$. Another difference between ${\bf{D}}_n$ and $\bf{DS}_n$ is that ${\bf{D}}_n$ contains exactly $n+1$ involutions, namely $g^{2^{m-1}}$ and all $g^ih$, while ${\bf{SD}}_n$ does only $2^{m-1}+1$, namely $g^{2^{m-1}}$ and $g^ih$ with even $i$.
\begin{proposition}[Suzuki's classification]
\label{suzukicl} A $2$-group $H$ which contains an involution whose centralizer has order $4$ is either dihedral, or semi-dihedral, or it has order $4$.
\end{proposition}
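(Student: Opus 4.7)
The plan is to argue by induction on $|H|$. The base cases, $|H|\le 8$, are handled by direct inspection: groups of order $4$ are trivially of the claimed form, and among the five isomorphism types of $2$-groups of order $8$ (the groups $C_8$, $C_4\times C_2$, $C_2^3$, $D_4$, and $Q_8$), only the dihedral group $D_4$ contains an involution with centralizer of order $4$. For $|H|\ge 16$, denote the given involution by $z$ and write $A:=C_H(z)$, a subgroup of order $4$ that is necessarily abelian.

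The first structural step is to show $A\cong C_2\times C_2$. If $A=\langle a\rangle$ were cyclic with $a^2=z$, then any $g\in N_H(A)$ would send $a$ to an order-$4$ element of $A$, hence to $a$ or $a^{-1}$; in either case $gzg^{-1}=(gag^{-1})^2=z$, so $g\in C_H(z)=A$. This would force $N_H(A)=A$, against the standard fact that a proper subgroup of a nontrivial $2$-group is strictly contained in its normalizer. Hence $A=\langle z,t\rangle\cong C_2\times C_2$ for a second involution $t$. Moreover $C_H(A)\subseteq C_H(z)=A$ and $A\subseteq C_H(A)$, so $C_H(A)=A$; consequently $N_H(A)/A$ embeds into $\mathrm{Aut}(A)\cong S_3$ and, being a $2$-group, has order at most $2$.

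For the inductive step pick a maximal subgroup $M$ of $H$ containing $A$, automatically normal of index $2$. Since $C_H(z)\subseteq M$, one has $C_M(z)=A$, so the hypothesis is inherited by the pair $(M,z)$. By the inductive hypothesis $M$ is dihedral or semi-dihedral of order $|H|/2\ge 8$, and in either case contains a characteristic cyclic subgroup $C$ of index $2$ in $M$ (the unique cyclic subgroup of that order). Since $C$ is characteristic in the normal subgroup $M$, it is normal in $H$ with $[H:C]=4$, so $H$ is a $2$-group admitting a cyclic subgroup of index $4$. Elementary analysis of the conjugation action of $H$ on $C$, combined with the restriction to $M$ (known from its structure) and the centralizer hypothesis $|C_H(z)|=4$, singles out $H\in\{D_{|H|/2},\,SD_{|H|/2}\}$.

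The principal obstacle in the last step is ruling out the remaining $2$-groups that admit a cyclic subgroup of index $2$, namely the cyclic group $C_{2^n}$, the generalized quaternion group $Q_{2^{n-1}}$, and the modular maximal-cyclic group $M_{2^n}$, where $2^n=|H|$. The first two are immediate: each contains a unique involution, lying in the center, whose centralizer is the whole group. For $M_{2^n}=\langle a,b\mid a^{2^{n-1}}=b^2=1,\ bab^{-1}=a^{1+2^{n-2}}\rangle$ one computes $[a^i,b]=a^{i\cdot 2^{n-2}}$, so $C_{M_{2^n}}(b)=\langle a^2,b\rangle$ has order $2^{n-1}>4$ for $n\ge 4$. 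With these types excluded, identifying $H$ with $D_{|H|/2}$ or $SD_{|H|/2}$ is a routine matching of generators and relations, completing the induction.
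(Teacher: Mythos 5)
The paper does not actually prove this proposition: it is imported as Lemma 4 of Suzuki's 1951 paper, so there is no internal proof to compare yours against. Judged on its own terms, your argument is sound up to a point and then has a genuine gap at the decisive step. The base cases, the argument that $A=C_H(z)\cong C_2\times C_2$ (via $N_H(A)\supsetneq A$ for a proper subgroup of a $2$-group), the inheritance of the hypothesis by a maximal subgroup $M\supseteq A$, and the conclusion that the cyclic subgroup $C$ of index $2$ in $M$ is characteristic in $M$, hence normal of index $4$ in $H$ --- all of this is correct. But the sentence ``Elementary analysis of the conjugation action of $H$ on $C$ \ldots\ singles out $H\in\{D_{|H|/2},SD_{|H|/2}\}$'' is exactly the mathematical content of Suzuki's lemma, and it is asserted rather than carried out. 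Your final paragraph then classifies $2$-groups possessing a cyclic subgroup of \emph{index $2$}, but at that stage you have only produced a cyclic normal subgroup of \emph{index $4$}; the existence of an index-$2$ cyclic subgroup is never established, so the concluding paragraph addresses a situation you have not yet reached.

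The omission is not routine. The configuration you arrive at --- $C\trianglelefteq H$ cyclic of index $4$ with $M=\langle C,z\rangle$ dihedral and maximal --- is also realized by $H=D_4\times C_2$ of order $16$ (take $C$ the rotation subgroup of the $D_4$ factor), a group that is neither dihedral nor semi-dihedral and has no cyclic subgroup of index $2$ since its exponent is $4$. Hence the passage from your step with $C$ to the conclusion must invoke the hypothesis $|C_H(z)|=4$ in an essential way; a workable route is to split on whether $C_H(C)=C$ or $[C_H(C):C]=2$, and in the latter case on whether the abelian group $C_H(C)$ is cyclic (which does yield the index-$2$ cyclic subgroup and lets your last paragraph apply) or of type $C_{2^{n-2}}\times C_2$ (which must be excluded by computing $C_H(z)$ directly). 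None of this case analysis appears in the proposal, and it is not a three-line computation. Either carry it out explicitly, or do what the paper does and cite Suzuki.
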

We also need a few technical lemmas on finite $2$-groups.
\begin{lemma}{\rm{\cite[Satz 14.9]{huppertI1967}}}.
\label{gr0}
%qui n non è coerente con i precedenti
Up to isomorphisms, there exist exactly four non-abelian groups $H$ of order ${2^{m+1}}\geq 16$ containing a cyclic subgroup of index $2$, namely the dihedral group, the semi-dihedral group, the generalized quaternion group, and the group generated by an element $g$ of order $2^{m-1}$ together with an involution $h$ such that $hgh=g^{1+2^{m-2}}$. The number of involutions of $H$ is equal to 
{$2^{m}+1, 2^{m-1}+1,1,3$}
%$\ha\,n+1,$ $\qa\,n +1, 1,3$ 
respectively.
\end{lemma}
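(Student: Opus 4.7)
The plan is to analyze $H$ through the conjugation action of a coset representative on the index-$2$ cyclic subgroup $N=\langle g\rangle$ of order $2^m$. Pick $h\in H\setminus N$ and write $hgh^{-1}=g^r$. Since $h^2\in N$ centralizes $g$, one gets $r^2\equiv 1\pmod{2^m}$; for $m\geq 3$ the $2$-torsion units of $(\mathbb{Z}/2^m\mathbb{Z})^\times$ are exactly
$$\{1,\ -1,\ 2^{m-1}-1,\ 2^{m-1}+1\},$$
and $r=1$ is excluded by the non-abelianness of $H$, leaving three admissible values of $r$.

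Next, write $h^2=g^s$. The identity $h\cdot h^2=h^2\cdot h$ forces $s(r-1)\equiv 0\pmod{2^m}$, while replacing $h$ by $g^th$ replaces $(s,r)$ by $(s+t(r+1),r)$. I would treat the three admissible values of $r$ separately. For $r=-1$ the coefficient $r+1$ vanishes, so $s$ is an invariant of the coset $Nh$; the congruence forces $s\in\{0,2^{m-1}\}$, and the two possibilities yield, respectively, the dihedral group (when $s=0$, so $h$ is an involution) and the generalized quaternion group (when $s=2^{m-1}$, so $h$ has order $4$). For $r=2^{m-1}-1$ one has $r+1=2^{m-1}$ and the congruence again forces $s\in\{0,2^{m-1}\}$; these two values are interchanged by the choice $t=1$, so $h$ may be normalized to be an involution, producing the semi-dihedral group. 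For $r=2^{m-1}+1$ the coefficient $r+1=2^{m-1}+2=2(2^{m-2}+1)$ has the same $2$-adic valuation as the forced even value of $s$, and an elementary computation shows that $s$ can always be absorbed into $s=0$, producing the fourth group.

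To verify that the four cases are pairwise non-isomorphic and to count the involutions, I would use the general formula
$$(g^i h)^2 \;=\; g^{i(1+r)+s},$$
obtained directly from $hg^ih^{-1}=g^{ir}$ and $h^2=g^s$. In the dihedral case $(r,s)=(-1,0)$ every $g^ih$ is an involution, contributing $2^m$ involutions outside $N$ plus the central involution $g^{2^{m-1}}$, for a total of $2^m+1$. In the semi-dihedral case $(r,s)=(2^{m-1}-1,0)$ one finds $(g^ih)^2=g^{i\cdot 2^{m-1}}$, which vanishes exactly for even $i$, giving $2^{m-1}+1$ involutions. In the generalized quaternion case $(r,s)=(-1,2^{m-1})$ one has $(g^ih)^2=g^{2^{m-1}}\neq 1$ for every $i$, so $g^{2^{m-1}}$ is the only involution. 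Finally, in the fourth case $(r,s)=(2^{m-1}+1,0)$ one gets $(g^ih)^2=g^{i(2^{m-1}+2)}$, which vanishes exactly for $i\in\{0,2^{m-1}\}$, yielding the three involutions $g^{2^{m-1}}$, $h$ and $g^{2^{m-1}}h$. The four involution counts being distinct, the four groups are mutually non-isomorphic.

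The main obstacle I expect is the normalization of $h$ for $r=2^{m-1}\pm 1$: one must carefully exploit the freedom $h\mapsto g^th$ to absorb any admissible $s$ into $s=0$, while simultaneously recognizing that this freedom disappears when $r=-1$, and that two genuinely non-isomorphic groups must therefore appear there. The involution counts handle this last distinction in the same step as the rest of the classification.
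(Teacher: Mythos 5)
Your argument is correct, and it supplies a proof that the paper itself does not contain: the lemma is quoted verbatim from Huppert (Satz 14.9) with no proof given. What you wrote is essentially the standard derivation (and close to Huppert's own): the reduction to $r^2\equiv 1\pmod{2^m}$ with the four unit square roots $\{1,-1,2^{m-1}\pm 1\}$, the analysis of $h^2=g^s$ under the constraint $s(r-1)\equiv 0$ and the normalization $s\mapsto s+t(r+1)$, and then the single identity $(g^ih)^2=g^{i(1+r)+s}$ doing double duty for the involution counts and for the pairwise non-isomorphism. Two small points. First, your coset analysis only shows that there are \emph{at most} four isomorphism classes; to get ``exactly four'' you should add a sentence on existence --- the three cases with $s=0$ are realized as semidirect products $\mathbf{Z}/2^m\rtimes_r\mathbf{Z}/2$, and the generalized quaternion group needs a separate (classical) construction, e.g.\ as a subgroup of the unit quaternions. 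Second, your normalization $(r,s)=(2^{m-1}+1,0)$ with $g$ of order $2^m$ is the internally consistent form of the fourth group; the paper's phrasing (``$g$ of order $2^{m-1}$ \dots $hgh=g^{1+2^{m-2}}$'') carries an index shift, since an element of order $2^{m-1}$ and an involution cannot generate a group of order $2^{m+1}$ with a cyclic subgroup of index $2$. Your computation in effect corrects this. Since only the involution counts are used later in the paper (to separate dihedral from semi-dihedral quotients in the proof of Theorem \ref{case(I)}), your formula $(g^ih)^2=g^{i(1+r)+s}$ is exactly the part that matters.
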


\begin{lemma}
\label{lemgr} Let $H$ be a transitive permutation group on a set $\Delta$ whose $1$-point stabilizer of {$H$} has order two. 
%For $W\in \Delta$, 
{Let} $\Delta_w$ be the set of all fixed points of  an involution $w$ {of $H$}. Then $|C_{H}(w)|=2|\Delta_w|.$
\end{lemma}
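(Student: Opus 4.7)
The plan is to apply the orbit-stabilizer theorem to the natural action of $C_H(w)$ on $\Delta_w$. First I would observe that $C_H(w)$ does preserve $\Delta_w$: if $g\in C_H(w)$ and $\alpha\in\Delta_w$, then $w(g\alpha)=g(w\alpha)=g\alpha$, so $g\alpha\in\Delta_w$. So the restricted action is well-defined, and it suffices to show it is transitive with a point stabilizer of order $2$.

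The key reduction is that for every $\alpha\in\Delta_w$ one has $H_\alpha=\langle w\rangle$. Indeed, $w\in H_\alpha$ because $\alpha$ is fixed by $w$, and since $|H_\alpha|=2$ this forces $H_\alpha=\langle w\rangle$. This gives the stabilizer computation immediately: the stabilizer of $\alpha$ inside $C_H(w)$ is $C_H(w)\cap H_\alpha=\langle w\rangle$, of order $2$. It also gives transitivity: given $\alpha,\beta\in\Delta_w$, transitivity of $H$ on $\Delta$ produces $h\in H$ with $h\alpha=\beta$, whence $h^{-1}wh$ fixes $\alpha$ and so lies in $H_\alpha=\langle w\rangle$; being a non-trivial involution it must equal $w$, so $h\in C_H(w)$.

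Combining these steps, orbit-stabilizer yields $|C_H(w)|=2|\Delta_w|$. There is no real obstacle here; the only subtlety is the remark that, because $|H_\alpha|=2$, the involution $w$ must generate every stabilizer of a fixed point of $w$, and this is precisely what allows a conjugator $h$ moving one $w$-fixed point to another to be promoted to an element of $C_H(w)$.
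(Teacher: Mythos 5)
Your proof is correct and follows essentially the same route as the paper's one-line proof, which rests on the same key observation (an element of $H$ preserves $\Delta_w$ setwise if and only if it centralizes $w$, because every point stabilizer met by $\Delta_w$ equals $\langle w\rangle$); you have merely made explicit the orbit--stabilizer bookkeeping that the paper leaves implicit. The only caveat, shared with the paper's version, is that the count presupposes $\Delta_w\neq\emptyset$, which holds in every application of the lemma since $w$ is there taken to generate a point stabilizer.
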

\begin{proof} It is enough to observe that $g\in H$ leaves $\Delta_w$ invariant if and only $g\in C_{H}(w)$.
\end{proof}
\begin{lemma}
\label{lemgr1} Let $u$ be a central involution of a $2$-group $H$ of order at least $16$. Assume that  $\bar{H}=H/\langle u \rangle$ is a dihedral group. Let $\bar{C}$ {be} a maximal cyclic subgroup of $\bar{H}$. Then the counter-image $C$ of $\bar{C}$ under the natural epimorphism  ${\tau:} H\to \bar{H}$ is either a cyclic subgroup of $H$, or it is a direct product $E\times
\langle u \rangle$ with a cyclic subgroup {$E$}.
\end{lemma}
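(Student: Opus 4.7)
The plan is to pick a convenient preimage under $\tau$ of a generator of $\bar{C}$ and read off the structure of $C$ from the order of that preimage. As a first step, I would fix the basic numerology: since $\tau:H\to\bar{H}$ has kernel $\langle u\rangle$ of order $2$ and $\bar{C}\subseteq\bar{H}$, the full preimage $C=\tau^{-1}(\bar{C})$ satisfies $|C|=2|\bar{C}|$ and $\langle u\rangle\leq C$. Write $n=|\bar{C}|$. Because the quotient $C/\langle u\rangle=\bar{C}$ is cyclic and $\langle u\rangle$ is central in $C$ (being central in $H$), the group $C$ is automatically abelian: it is generated by any preimage of a generator of $\bar{C}$ together with the central element $u$.

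Next I would choose any $g\in C$ with $\tau(g)=\bar{g}$ a generator of $\bar{C}$, and examine $g^n$. Since $\bar{g}$ has order exactly $n$, we have $g^n\in\ker\tau=\langle u\rangle$, so either $g^n=u$ or $g^n=1$. This dichotomy drives the proof.

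In the first case, $g^n=u\neq 1$ while $g^{2n}=u^2=1$, so $g$ has order $2n=|C|$, and therefore $C=\langle g\rangle$ is cyclic. In the second case, $g^n=1$, and the order of $g$ must be exactly $n$ (it cannot be smaller, or else $\bar{g}$ would have order less than $n$). I would then verify that $\langle g\rangle\cap\langle u\rangle=\{1\}$: if $u=g^k$ for some $k$, then $\bar{g}^k=\bar{1}$ forces $n\mid k$, whence $g^k=1\neq u$, a contradiction. Since $u$ is central and $|\langle g\rangle|\cdot|\langle u\rangle|=2n=|C|$, this gives $C=\langle g\rangle\times\langle u\rangle$ with $E=\langle g\rangle$ cyclic, which is the second alternative of the lemma.

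The argument is essentially bookkeeping on orders, so no serious obstacle appears; the only point requiring any care is the verification that $\langle g\rangle\cap\langle u\rangle=\{1\}$ in the non-cyclic case, which is handled by the short computation above. The hypotheses that $\bar{H}$ is dihedral and $|H|\geq 16$ are used only indirectly, to guarantee that $\bar{C}$ is an unambiguously defined cyclic subgroup and that the arithmetic $|C|=2n$ is not degenerate.
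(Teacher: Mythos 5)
Your proof is correct and follows essentially the same route as the paper: both arguments lift a generator of $\bar{C}$ to an element $c\in C$ and observe that $\langle c\rangle$ is either all of $C$ (cyclic case) or an index-$2$ subgroup not containing $u$, yielding $C=\langle c\rangle\times\langle u\rangle$. Your version merely makes the order bookkeeping (the dichotomy $c^n\in\{1,u\}$ and the triviality of $\langle c\rangle\cap\langle u\rangle$) explicit where the paper leaves it implicit.
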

\begin{proof}
Take an element $c\in H$ such that $\bar{c}=\tau(c)$ is a generator of
$\bar{C}$. Then, either $\langle c \rangle=C$ and $C$ is cyclic,
or $E=\langle c \rangle$ is a cyclic subgroup of $C$ of index $2$.
In the latter case, $u\not \in E$ and hence $C=E\times \langle u \rangle$.
\end{proof}
\begin{lemma}{\rm{\cite[Satz 14.10]{huppertI1967}}}.
\label{group8} Up to isomorphisms, there are five groups of order $8$. Two of them are non-abelian, namely the dihedral and the quaternion groups.
\end{lemma}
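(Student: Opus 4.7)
The plan is to classify all groups of order $8$ by splitting on abelian versus non-abelian, and within the non-abelian case by the structure of the unique cyclic subgroup of index $2$.

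First I would dispose of the abelian case by invoking the fundamental theorem for finite abelian groups: the partitions of $3$ are $3$, $2+1$, $1+1+1$, giving exactly three abelian groups of order $8$, namely $\mathbf{Z}_{8}$, $\mathbf{Z}_{4}\times \mathbf{Z}_{2}$ and $\mathbf{Z}_{2}^{3}$. These are pairwise non-isomorphic, as they have different exponents.

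Next I would treat the non-abelian case. Let $H$ be a non-abelian group of order $8$. Since $H$ is not cyclic, it has no element of order $8$; since it is non-abelian, it is not elementary abelian, for in a group where every non-identity element has order $2$ one has $xy=(xy)^{-1}=y^{-1}x^{-1}=yx$. Hence $H$ contains an element $g$ of order $4$, and $\langle g\rangle$ is a (normal) subgroup of index $2$. Pick any $h\in H\setminus \langle g\rangle$. Then conjugation by $h$ is a non-trivial automorphism of $\langle g\rangle\cong \mathbf{Z}_{4}$, forcing $hgh^{-1}=g^{-1}$. Moreover $h^{2}\in \langle g\rangle$ is centralized by $h$, and the conjugation by $h$ acts on $\langle g\rangle$ as inversion, so the fixed subset is $\{1,g^{2}\}$; thus $h^{2}\in\{1,g^{2}\}$.

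This gives exactly two presentations: $\langle g,h\mid g^{4}=h^{2}=1,\; hgh^{-1}=g^{-1}\rangle$, which is the dihedral group $\mathbf{D}_{4}$, and $\langle g,h\mid g^{4}=1,\; h^{2}=g^{2},\; hgh^{-1}=g^{-1}\rangle$, which is the quaternion group. These two groups are non-isomorphic since $\mathbf{D}_{4}$ contains five involutions (the central $g^{2}$ together with the four elements $h,gh,g^{2}h,g^{3}h$), whereas the quaternion group contains a single involution (namely $g^{2}$, as $(g^{i}h)^{2}=g^{i}hg^{i}h=g^{i}\cdot g^{-i}h^{2}=h^{2}=g^{2}\neq 1$). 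The only truly routine step is the enumeration of involutions in each group; there is no real obstacle, merely a short case analysis. Combining the two cases yields the five isomorphism types claimed, two of them non-abelian.
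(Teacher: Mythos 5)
Your argument is correct. Note, however, that the paper does not actually prove this lemma at all: it is stated as a quotation of Huppert, \emph{Endliche Gruppen I}, Satz 14.10, so there is no internal proof to compare against. What you supply is the standard self-contained classification: the abelian case via the structure theorem (three types from the partitions of $3$), and the non-abelian case by locating an element $g$ of order $4$, observing that any $h\notin\langle g\rangle$ must invert $g$ and satisfy $h^2\in\{1,g^2\}$, and distinguishing the two resulting groups by their involution counts ($5$ versus $1$). This is exactly the textbook route, and it is complete up to two points you pass over quickly: you should say explicitly why conjugation by $h$ on $\langle g\rangle$ is non-trivial (if $h$ centralized $g$ then $H=\langle g,h\rangle$ would be abelian), and strictly speaking one must know that both presentations are realized by groups of order $8$ (immediate, since ${\bf D}_4$ and the quaternion group exist concretely). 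Neither gap is serious. Incidentally, your involution count for ${\bf D}_4$ is the same fact the paper exploits elsewhere (Lemma~4.8 of the paper, quoting Satz 14.9) to separate dihedral from semi-dihedral and quaternion groups, so your argument is well aligned with how the lemma is actually used.
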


%%%%%%%%%%%%%%%%%%%%%%%%%%%%%%%%%%%%%%%%%%%%%%%

\section{Central involutions in $\aut(\cX)$}
We begin with a number of results valid for curves $\cX$ of genus $\gg\geq 2$ which satisfy both hypotheses (I) and (II).
\begin{lemma}
\label{nakaimp} The $2$-rank $\gamma$ of $\cX$ is at least $2$.
\end{lemma}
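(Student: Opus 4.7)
The plan is to rule out $\gamma\in\{0,1\}$. The case $\gamma=0$ contradicts (II) via Proposition \ref{zeroprank}, so assume $\gamma=1$ and aim for a contradiction. Let $\bar{\cX}=\cX/S$ have genus $\bar{\gg}$ and $2$-rank $\bar{\gamma}$. Setting $\gamma=1$ in the Deuring--Shafarevich formula \eqref{eq2deuring} gives
\[
0=|S|(\bar{\gamma}-1)+\sum_{i=1}^{k}(|S|-\ell_i),
\]
and since every summand in the orbit sum is non-negative this forces $\bar{\gamma}\le 1$. Similarly, (I) supplies $2\gg-2<|S|$, which together with Hurwitz \eqref{eq1} yields $|S|(2\bar{\gg}-2)\le 2\gg-2<|S|$, so $\bar{\gg}\le 1$. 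It remains to dispatch the subcases.

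Suppose first that $\bar{\gg}=0$; then $\bar{\gamma}=0$ automatically, and Deuring--Shafarevich collapses to $\sum(|S|-\ell_i)=|S|$. Each $\ell_i$ is a proper power-of-$2$ divisor of $|S|$, so $|S|-\ell_i\ge|S|/2$, and the only possibility is exactly two short orbits, both of length $|S|/2$, with point-stabilizers of order $2$. By Proposition \ref{secondram} applied to each such stabilizer, the different exponents $d_1,d_2$ at these two orbits are even integers $\ge 2$. Hurwitz now reads
\[
d_1+d_2=4+\frac{4(\gg-1)}{|S|},
\]
and (I) makes the right-hand side strictly less than $6$. Since $d_1+d_2$ is an even integer $\ge 4$, the only option is $d_1+d_2=4$, which forces $\gg=1$: contradiction.

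If instead $\bar{\gg}=1$, then $\bar{\gamma}=1$ leaves no short orbits and Hurwitz reduces to $2\gg-2=0$, absurd; while $\bar{\gamma}=0$ reproduces exactly the previous two-orbit configuration, but this time Hurwitz becomes $(|S|/2)(d_1+d_2)=2\gg-2$, so $d_1+d_2=4(\gg-1)/|S|<2$ by (I), which is incompatible with $d_i\ge 2$. All subcases are impossible, hence $\gamma\ne 1$ and therefore $\gamma\ge 2$. The main delicate point is harvesting the parity of $d_1+d_2$ from Proposition \ref{secondram}: without this parity restriction the intermediate value $5$ would remain admissible in the $\bar{\gg}=0$ subcase and the contradiction would collapse.
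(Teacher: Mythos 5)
Your proof is correct, but it takes a genuinely different route from the paper's. The paper argues locally around one central involution $u$ that fixes a point: Deuring--Shafarevich applied to $\langle u\rangle$ forces the quotient $\cX/\langle u\rangle$ to have $2$-rank $0$ and $u$ to have exactly two fixed points $P_1,P_2$; since $u$ is central, $S$ preserves $\{P_1,P_2\}$, so the index-two stabilizer $S_{P_1}$ induces on $\cX/\langle u\rangle$ a $2$-group fixing two distinct points, which Proposition \ref{zeroprank} (uniqueness of the fixed point in $2$-rank zero) forces to be trivial; hence $|S|=4$, against (I). You instead work globally with the quotient $\cX/S$: Deuring--Shafarevich and Hurwitz pin down $\bar{\gamma}\le 1$, $\bar{\gg}\le 1$ and the orbit structure (two short orbits of length $\ha|S|$ with stabilizers of order $2$), and the contradiction comes from the evenness of the different exponents supplied by Proposition \ref{secondram} -- you are right that without this parity the value $d_1+d_2=5$, i.e.\ $|S|=4(\gg-1)$, would survive in the $\bar{\gg}=0$ subcase. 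Each approach buys something: the paper's is shorter and leans entirely on Proposition \ref{zeroprank}, but it needs a central involution that fixes a point, a step it does not justify; yours is self-contained on that score and in effect anticipates the two-short-orbit analysis the paper only performs later in Lemma \ref{lem00} (with no circularity, since you derive the orbit structure directly from \eqref{eq2deuring} rather than quoting that lemma). One minor presentational point: in writing $d_1,d_2$ for ``the different exponents at these two orbits'' you are implicitly using that points in the same orbit have conjugate stabilizers and hence equal different exponents; this is standard but deserves a word.
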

\begin{proof} {}From Proposition \ref{zeroprank}, $\gamma\geq
1.$ To prove the assertion by absurd, assume that $\gamma=1$.
%Therefore, we assume on the contrary that
%some curve with $\gamma=1$ satisfies the hypotheses of Theorem \ref{princ1}. Among those curves take one that has minimal genus $g$. Here $g\geq 3$ by Proposition \ref{nakajimabound}.
Let $u\in Z(S)$ be an involution that fixes a point on
$\cX$. From (\ref{eq2deuring}) applied to $U=\langle u \rangle$,  the $2$-rank of the
quotient curve $\bar{\cX}=\cX/U$ is equal to $0$, and $u$ fixes precisely two
points on $\cX$, say $P_1$ and $P_2$. As $u\in Z(S)$, the set $\{P_1,P_2\}$ is preserved by $S$. Therefore, the stabilizer $S_{P_1}$
of $P_1$ in $S$ has index two in $S$, and it
fixes $P_2$ as well. Let $\bar{P_1}$ and $\bar{P_2}$ be the points of $\bar{\cX}$ lying under $P_1$ and $P_2$, respectively. Obviously, $\bar{P_1}\neq \bar{P_2}$. Furthermore,
the factor group $S_{P_1}/U$ is a subgroup of $\aut(\bar{\cX})$, and it fixes both $\bar{P_1}$ and $\bar{P_2}$. Since $\bar{\cX}$ has zero $2$-rank, Proposition \ref{zeroprank} implies that
$S_{P_1}/U$ is trivial. Therefore, $S_{P_1}=U$ and hence $|S|=4$; a contradiction with (I).
\end{proof}
\begin{lemma}
\label{lem00}
$S$ has exactly two short orbits on $\cX$, the larger one of size $\ell_1=\ha\,|S|$ and the shorter one of size $2\leq \ell_2\leq \qa |S|$.
\end{lemma}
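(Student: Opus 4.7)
The plan is to play off the Deuring–Shafarevich formula against Nakajima's bound and hypothesis (I) to pin down both the number and the sizes of the short orbits.

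First, I would set up the key inequalities. By hypothesis (II), every orbit of $S$ on $\cX$ has size at least $2$, so each short orbit has size $\ell_i\in\{2,4,\ldots,|S|/2\}$; in particular $|S|-\ell_i\ge |S|/2$ for every $i$. Combined with Lemma \ref{nakaimp} ($\gamma\ge 2$) and Proposition \ref{nakajimabound}, I get $\gamma-1\ge |S|/4$. Hypothesis (I) and $\gamma\le \gg$ give $\gamma-1\le \gg-1<|S|/2$. Thus
\begin{equation*}
\tfrac{|S|}{4}\le \gamma-1<\tfrac{|S|}{2}.
\end{equation*}

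Next, applying the Deuring–Shafarevich formula \eqref{eq2deuring} to $S$ acting on $\cX$ yields
\begin{equation*}
\gamma-1=|S|(\bar\gamma-1)+\sum_{i=1}^{k}(|S|-\ell_i),
\end{equation*}
where $\bar\gamma$ is the $2$-rank of $\cX/S$. I would first rule out $\bar\gamma\ge 1$: if $\bar\gamma\ge 1$ and some short orbit exists, then $\gamma-1\ge |S|-\ell_1\ge |S|/2$, contradicting the upper bound above; and if $k=0$ then either $\bar\gamma=1$ forces $\gamma=1$ against Lemma \ref{nakaimp}, or $\bar\gamma\ge 2$ forces $\gamma-1\ge |S|$, again contradicting $\gamma-1<|S|/2$. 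Hence $\bar\gamma=0$, and the formula reads
\begin{equation*}
\sum_{i=1}^{k}(|S|-\ell_i)=|S|+(\gamma-1).
\end{equation*}

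Now I would bound $k$. The right-hand side lies in the interval $[5|S|/4,3|S|/2)$, while each summand on the left lies in $[|S|/2,|S|-2]$. A single summand is at most $|S|-2<5|S|/4$, so $k\ge 2$; three or more summands give at least $3|S|/2$, exceeding the bound. Therefore $k=2$. Since the two summands $|S|-\ell_1,|S|-\ell_2$ are each at least $|S|/2$ and together are strictly less than $3|S|/2$, they cannot both exceed $|S|/2$; combined with $|S|-\ell_i\ge |S|/2$, exactly one of them equals $|S|/2$. Labelling this one as $\ell_1$, we obtain $\ell_1=|S|/2$ and
\begin{equation*}
\ell_2=\tfrac{|S|}{2}-(\gamma-1).
\end{equation*}
Applying Nakajima ($\gamma-1\ge |S|/4$) to the displayed expression gives $\ell_2\le |S|/4$, while $\ell_2\ge 2$ follows from (II) since $\ell_2=1$ would mean $S$ fixes a point.

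The whole argument is essentially an inequality chase; there is no hard obstacle, but care is needed to use Nakajima in the right direction (upper bound on $|S|$ translated into a lower bound on $\gamma-1$) and to remember that orbit sizes of a $2$-group are powers of $2$, so the short orbit sizes jump in factors of $2$ up to $|S|/2$.
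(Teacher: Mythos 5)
Your argument is correct and follows essentially the same route as the paper's proof: the Deuring--Shafarevich formula together with hypothesis (I) forces $\bar\gamma=0$ and exactly $k=2$ short orbits, and the fact that orbit sizes of a $2$-group are powers of $2$ dividing $|S|$ pins down $\ell_1=\frac{1}{2}|S|$ (do make sure you actually invoke that discreteness when you claim the two summands cannot both exceed $\frac{1}{2}|S|$ --- the bound $\sum(|S|-\ell_i)<\frac{3}{2}|S|$ alone does not give this). The only cosmetic difference is that you import Nakajima's bound to obtain $\gamma-1\ge\frac{1}{4}|S|$ and hence $\ell_2\le\frac{1}{4}|S|$, whereas the paper gets $\ell_2\le\frac{1}{4}|S|$ directly from $\ell_2<\frac{1}{2}|S|$ and divisibility, using only $\gamma\ge 2$ from Lemma \ref{nakaimp}.
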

\begin{proof}
Let $\bar{\gamma}$ be the $2$-rank of the quotient curve $\bar{\cX}=\cX/S$.
{}From (\ref{eq2deuring}),
$$
%g-1\ge
\gamma -1=\bar{\gamma}|S|-|S|+\sum_{i=1}^k(|S|-\ell_i)=(\bar{\gamma}+k-1)|S|-\sum_{i=1}^k\ell_i\ge (\bar{\gamma}+\frac{k}{2}-1)|S|,
$$
where $\ell_1,\ldots,\ell_k$ are the sizes of the short orbits of
$S$.

If no such short orbits exist, then $\gamma-1=|S|(\bar{\gamma}-1)$ holds, whence $\bar{\gamma}>1$ follows by $\gamma\geq 2$.
For $\bar{\gamma}>1$, this equation yields that $|S|{\le }(\gamma-1)\leq (\gg-1)$ contradicting (I).

Therefore, $k\geq 1$, and if $\bar{\gamma}\geq 1$ then the above equation implies that $|S|\leq 2(\gamma-1)\leq 2(\gg-1)$, a contradiction with (I).

So, $\bar{\gamma}=0$ and $1\leq k \leq 2$. Actually, $k$ must be $2$, $\gamma{\geq} 2$ being inconsistent with $k=1$ and $\bar{\gamma}=0$ in the above equation.

Therefore, $S$ has precisely two short orbits
%modifica 8 maggio 2008
say $\Omega_1$ and $\Omega_2$,
%fine
and $$\gamma-1=|S|-(\ell_1+\ell_2)$$  with $|\Omega_1|=\ell_1$ and
$|\Omega_2|=\ell_2$.

Assume without loss of generality that
$\ell_1\ge {\ell}_2$. Obviously, $\ell_2<\ha\,|S|$, as otherwise we would have
$\gamma=1$ contradicting Lemma \ref{nakaimp}.
Also, $\ell_1>\qa\,|S|$, since $\gamma-1\ge
|S|(1-\qa-\qa)$ is inconsistent with (I).
Then,
\begin{equation}
\label{elle1}
\ell_1=\ha\,|S|,
\end{equation}
and
\begin{equation}\label{elle2}
 \gamma-1=\ha\,|S|-\ell_2,
\end{equation}
with $\ell_2\le \qa\,|S|$.
%Note that the latter bound yields that $|S|\geq 8$, otherwise $S$ must have a fixed point on $\cX$. From this, $\ell_1\geq 4$.
\end{proof}
We keep up the notation introduced in the preceding proof. So, $\Omega_1$ and $\Omega_2$ stand for the two short orbits of $S$ on $\cX$. Here  $\ell_1=|\Omega_1|=\ha\,|S|$ while $2\leq \ell_2=|\Omega_2|\leq \qa |S|$. To investigate the smallest case $\ell_2=2$ some technical lemmas are needed.
\begin{lemma}
\label{lem001} If $P\in \Omega_1$ then $|S_P|=2$. If $Q\in \Omega_2$ then
\begin{equation}
\label{eq001}
2\gg-2\geq |S|+\ell_2\big(|S_Q^{(2)}|+|S_Q^{(3)}|-4+{\sum_{i\ge 4} |S_Q^{(i)}|-1}\big),
\end{equation}
{and equality holds if and only if the genus of the quotient curve $\cX/S$ is equal to zero.}
\end{lemma}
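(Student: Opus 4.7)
The first assertion is immediate: Lemma~\ref{lem00} gives $\ell_1 = \ha|S|$, and the orbit--stabilizer theorem then forces $|S_P| = |S|/\ell_1 = 2$ for every $P \in \Omega_1$.

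For the inequality I would apply the Hurwitz genus formula \eqref{eq1} to $S$. By Lemma~\ref{lem00} the only short orbits are $\Omega_1$ and $\Omega_2$, and the quotient curve $\bar{\cX} = \cX/S$ has $2$-rank $\bar\gamma = 0$; in particular $\bar\gg \geq 0$, so $|S|(2\bar{\gg}-2) \geq -2|S|$. For $P \in \Omega_1$ the stabilizer $S_P$ has order $2$, and Proposition~\ref{secondram} guarantees that $d_P$ is an even integer $\geq 2$; hence the $\Omega_1$-contribution to $\sum d_P$ is at least $\ell_1 \cdot 2 = |S|$. For $Q \in \Omega_2$ I split
\begin{equation*}
d_Q = 2(|S_Q|-1) + (|S_Q^{(2)}|-1) + (|S_Q^{(3)}|-1) + \sum_{i\ge 4}(|S_Q^{(i)}|-1),
\end{equation*}
and use the identity $\ell_2|S_Q| = |S|$ to rewrite $\ell_2 \cdot 2(|S_Q|-1)$ as $2|S| - 2\ell_2$.

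Substituting these bounds into \eqref{eq1} gives
\begin{equation*}
2\gg - 2 \geq -2|S| + |S| + (2|S|-2\ell_2) + \ell_2\bigl((|S_Q^{(2)}|-1)+(|S_Q^{(3)}|-1)\bigr) + \ell_2\sum_{i\ge 4}(|S_Q^{(i)}|-1),
\end{equation*}
which collapses to \eqref{eq001} once one combines $-2\ell_2 - \ell_2 - \ell_2 = -4\ell_2$ into the $-4$ inside the parenthesis. The only non-sharp step is $|S|(2\bar{\gg}-2) \geq -2|S|$ (taken together with the minimal admissible value $d_P = 2$ at $\Omega_1$-points guaranteed by Proposition~\ref{secondram}); this is an equality precisely when $\bar\gg = 0$, which gives the stated equivalence. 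I do not anticipate any real obstacle: the whole argument is essentially a substitution, and the only mild subtlety is the constant-term bookkeeping that combines the $-2\ell_2$ coming from the $\Omega_2$-contribution with the two $-1$'s from the $i=2,3$ terms to produce the compact $-4$ appearing in \eqref{eq001}.
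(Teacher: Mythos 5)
Your derivation of the inequality is correct and follows essentially the same route as the paper: Hurwitz applied to $S$, the decomposition of $d_Q$ via \eqref{eq1bis}, and the identity $\ell_2|S_Q|=|S|$ to absorb the tame part of the $\Omega_2$-contribution; the constant-term bookkeeping producing the $-4$ is exactly right. The first assertion is also handled as in the paper.

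The gap is in the equality clause. You write that the minimal value $d_P=2$ at $\Omega_1$-points is ``guaranteed by Proposition~\ref{secondram}'', but that proposition only guarantees that $d_P$ is \emph{even}; a priori one could have $d_P\geq 4$, i.e.\ $|S_P^{(2)}|=2$. If that happened, then even with $\bar\gg=0$ your chain of inequalities would be strict, so ``equality iff $\bar\gg=0$'' would fail in the forward direction. The paper closes exactly this loophole before stating the inequality: if $|S_P^{(2)}|=2$, then Proposition~\ref{secondram} forces $|S_P^{(3)}|=2$ as well, so the $\Omega_1$-contribution to $\sum d_P$ is at least $2|S|$; feeding this back into Hurwitz (together with $\ell_2\le\qa|S|$ from Lemma~\ref{lem00}) gives $2\gg-2\ge \thr|S|$, contradicting hypothesis (I). Hence $S_P^{(2)}$ is trivial and $d_P=2$ is forced, not merely admissible, leaving $\bar\gg\ge 0$ as the unique source of slack. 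You need to add this step; once it is in place, your argument is complete and coincides with the paper's.
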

\begin{proof}
{The first assertion clearly follows from $\ell_1=\ha\,|S|$. Let $\bar \gg$ be the genus of the quotient curve $\cX/S$.}
{}From (\ref{eq1}) applied to $S$,
$$2\gg-2=(2\bar \gg-2)|S|+\ha\,|S|(2(|S_P|-1)+|S_P^{(2)}|-1+{\ldots})+\ell_2(2(|S_Q|-1)+|S_Q^{(2)}|-1+{\ldots}).$$
This together with  $|S_P|=2$ give
$$2\gg-2=(2\bar \gg+1) |S|+\ha\,|S|(|S_P^{(2)}|-1+{|S_P^{(3)}|-1+\ldots})+\ell_2(-2+|S_Q^{(2)}|-1+\ldots).$$
{If $|S_P^{(2)}|=2$, then by Proposition \ref{secondram} $|S_P^{(3)}|=2$, which contraditcs (I)}.

Therefore, $|S_P^{(2)}|=1$ and hence
(\ref{eq001}) holds.
\end{proof}

\begin{lemma}
\label{lem3} If $u$ is a central involution of $S$ which fixes a point of\,\, $\Omega_2$, then $u$ fixes $\Omega_2$ pointwise but fixes no point outside $\Omega_2$.
\end{lemma}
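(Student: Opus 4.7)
The statement has two parts: (a) $u$ fixes every point of $\Omega_2$, and (b) $u$ fixes no point outside $\Omega_2$. Both rest on the same centrality observation: if $u\in Z(S)$ fixes a point $R\in\cX$, then for every $g\in S$ we have $u(g(R))=g(u(R))=g(R)$, so $u$ fixes the whole $S$-orbit of $R$.

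For part (a), apply this observation to the given fixed point $Q\in\Omega_2$: since $\Omega_2$ is the $S$-orbit of $Q$, $u$ fixes $\Omega_2$ pointwise.

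For part (b), I will argue by contradiction. Suppose $u$ fixes a point $P\notin\Omega_2$. By the same centrality argument, $u$ fixes the entire $S$-orbit of $P$, which is either $\Omega_1$ (of size $\ell_1=\tfrac12|S|$) or some long orbit (of size $|S|$). Writing $F$ for the total number of fixed points of $u$, we therefore have
\[
F\ \geq\ \ell_2+\tfrac12|S|
\]
in either case (the long-orbit case gives the stronger bound $F\geq\ell_2+|S|$).

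The key step is to get an upper bound on $F$ from the Deuring--Shafarevich formula \eqref{eq2deuring} applied to $U=\langle u\rangle$. The short orbits of $U$ are exactly the singletons consisting of fixed points of $u$, so
\[
\gamma-1\ =\ 2(\bar\gamma-1)+F,
\]
where $\bar\gamma$ is the $2$-rank of $\cX/U$. Since $\bar\gamma\geq 0$, this gives $F\leq\gamma+1$. On the other hand, Lemma \ref{lem00} (specifically \eqref{elle2}) yields $\gamma-1=\tfrac12|S|-\ell_2$, hence
\[
F\ \leq\ \tfrac12|S|-\ell_2+2.
\]

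Comparing the two bounds on $F$ produces $\ell_2+\tfrac12|S|\leq\tfrac12|S|-\ell_2+2$, that is $\ell_2\leq 1$, contradicting the inequality $\ell_2\geq 2$ from Lemma \ref{lem00}. This forces $u$ to have no fixed point outside $\Omega_2$, completing the proof.

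The argument is entirely routine once centrality is used to propagate fixed points through $S$-orbits, then combined with Deuring--Shafarevich; the only mild subtlety is checking that the bound $\ell_2\geq 2$ (rather than merely $\ell_2\geq 1$) is available, which is exactly what Lemma \ref{lem00} guarantees.
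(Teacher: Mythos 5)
Your proof is correct and follows essentially the same route as the paper: centrality propagates the fixed point through the whole $S$-orbit, and the Deuring--Shafarevich formula applied to $\langle u\rangle$ then yields a numerical contradiction (the paper contradicts (I) directly, you contradict $\ell_2\geq 2$ via \eqref{elle2}, which amounts to the same thing). The only cosmetic difference is that the paper immediately reduces the "point outside $\Omega_2$" case to a point of $\Omega_1$, whereas you also carry along the (vacuous) long-orbit case; both handle it soundly.
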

\begin{proof}
Since $\Omega_2$ is an orbit of $S$ and $u\in Z(S)$, $u$ fixes $\Omega_2$ pointwise.
Assume on the contrary that $u$ also fixes a point on $\Omega_1$. Then $u$ must fix $\Omega_1$ pointwise. From (\ref{eq2deuring}) applied to $U=\langle u \rangle$,
$$\gamma-1=2(\gamma'-1)+\ha|S|+\ell_2,$$
where $\gamma'$ stands for the $2$-rank of the quotient curve $\cX'=\cX/U$. Since $\ell_2\geq 2$, this yields that $\gg-1\geq \gamma-1\geq \ha|S|$ contradicting (I).
\end{proof}

\begin{lemma}
\label{lembis} If a central involution $u$ of $S$ fixes a point of $\Omega_1$ then $u$ fixes $\Omega_1$ pointwise, $\ell_2=2$  and $\cX$ is a hyperelliptic curve.
\end{lemma}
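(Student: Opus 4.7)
\emph{Plan.} The first task is to show that $u$ fixes every point of $\Omega_1$. By Lemma~\ref{lem001}, $|S_P|=2$ for each $P\in \Omega_1$; since $u$ is an involution fixing some $P_0\in \Omega_1$, we must have $S_{P_0}=\langle u\rangle$. For any other point $P=g(P_0)\in \Omega_1$, the stabilizer $S_P=gS_{P_0}g^{-1}$ equals $\langle u\rangle$ because $u\in Z(S)$. Hence $u$ fixes the whole orbit $\Omega_1$, contributing at least $\ell_1=|S|/2$ fixed points to its action on $\cX$.

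Next I would apply the Deuring--Shafarevich formula \eqref{eq2deuring} to $U=\langle u\rangle$. Writing $\gamma'$ for the $2$-rank of $\cX/U$ and $f$ for the total number of fixed points of $u$, this reads $\gamma-1=2(\gamma'-1)+f$. Combining with \eqref{elle2}, i.e.\ $\gamma-1=\tfrac{|S|}{2}-\ell_2$, and the bound $f\ge |S|/2$ from the previous step, one gets
\[
-\ell_2 = 2(\gamma'-1)+\bigl(f-\tfrac{|S|}{2}\bigr)\ge -2,
\]
so $\ell_2\le 2$. Since $\ell_2\ge 2$ by Lemma~\ref{lem00}, equality forces $\ell_2=2$, $\gamma'=0$ and $f=|S|/2$; in particular $u$ has no fixed points outside $\Omega_1$.

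Finally, I would apply the Hurwitz genus formula to the degree-$2$ cover $\cX\to \cX/U$. The argument proving Lemma~\ref{lem001} shows $|S_P^{(2)}|=1$ at each $P\in \Omega_1$; since $U_P^{(i)}\le S_P^{(i)}$, one has $U_P^{(2)}=1$ and so the different exponent is $d_P=2$ by \eqref{eq1bis}. Summing over the $|S|/2$ fixed points yields $2\gg-2=4\gg'-4+|S|$, where $\gg'$ denotes the genus of $\cX/U$. Hence
\[
\gg'=\tfrac14\bigl(2\gg+2-|S|\bigr)<\tfrac14\cdot 4=1
\]
thanks to hypothesis (I), forcing $\gg'=0$. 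Therefore $\cX/U\cong \mathbf P^1$, and the double cover $\cX\to\cX/U$ exhibits $\cX$ as a hyperelliptic curve.

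The only non-routine point is the initial observation that $u\in Z(S)$ together with $|S_P|=2$ on $\Omega_1$ forces the \emph{same} involution $u$ to generate every point-stabilizer on $\Omega_1$; the remainder is bookkeeping with the Deuring--Shafarevich and Hurwitz formulas.
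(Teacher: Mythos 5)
Your proof is correct and follows essentially the same route as the paper: centrality of $u$ forces it to fix the orbit $\Omega_1$ pointwise, and the Deuring--Shafarevich formula combined with \eqref{elle2} pins down $\ell_2=2$ and $\gamma'=0$. The only differences are cosmetic: you obtain ``no fixed points outside $\Omega_1$'' from the numerical equality rather than by citing Lemma~\ref{lem3}, and you make explicit, via the Hurwitz formula and hypothesis (I), why $\cX/\langle u\rangle$ has genus $0$ --- a step the paper's proof leaves implicit when it asserts hyperellipticity.
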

\begin{proof} {}From Lemma \ref{lem3}, $S$ fixes no point outside $\Omega_1$. The argument in the proof of that Lemma applied to $\Omega_1$ proves the first assertion and gives the equation
$$\gamma-1=2(\gamma'-1)+\ha|S|,$$
where $\gamma'$ stands for the $2$-rank of the quotient curve $\cX'=\cX/U$, {with $U=\langle u \rangle$}. This and (\ref{elle2}) imply that $\gamma'=0$ and $\ell_2=2$. In particular, $\cX$ is a hyperelliptic cruve.
\end{proof}

\begin{lemma}
\label{indu1} If $\ell_2>2$ then every non-inductive central involution of $S$ fixes a point on $\cX$.
\end{lemma}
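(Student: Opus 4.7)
The plan is to argue by contradiction: suppose $u$ is a non-inductive central involution of $S$ that does not fix any point of $\cX$, while $\ell_2>2$. Since $\ell_2$ divides $|S|$ and $|S|$ is a power of $2$, $\ell_2\geq 4$, and Lemma \ref{lem00} gives $|S|\geq 4\ell_2\geq 16$, so in particular $|\bar S|=|S|/2\geq 8$, where $U=\langle u\rangle$ and $\bar S=S/U$. Under these assumptions I shall show that $u$ is in fact inductive, contradicting the hypothesis.

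Since $u$ acts freely on $\cX$, the double cover $\cX\to\bar\cX=\cX/U$ is unramified, so the Hurwitz genus formula reduces to $2\gg-2=2(2\bar\gg-2)$, giving $\bar\gg=(\gg+1)/2$ and forcing $\gg$ to be odd. Combined with $\gg\geq\gamma\geq 2$ (Lemma \ref{nakaimp}), this yields $\gg\geq 3$ and $\bar\gg\geq 2$. Similarly, the Deuring-Shafarevich formula \eqref{eq2deuring} applied to $U$ reduces to $\gamma-1=2(\bar\gamma-1)$. Hypothesis (I) for $\bar S$ acting on $\bar\cX$ is then immediate: $|\bar S|\geq 8$ was just observed, and $|\bar S|=|S|/2>(\gg-1)=2(\bar\gg-1)$ follows directly from (I) for $S$.

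The substantive step is to verify hypothesis (II) for $\bar S$. Assume, toward a contradiction within this step, that $\bar S$ fixes some point $\bar P\in\bar\cX$. Because $u$ is fixed-point-free, the fiber of $\bar P$ under $\cX\to\bar\cX$ consists of two distinct points $P$ and $P'=u(P)$, and the whole set $\{P,P'\}$ is preserved by $S$. Hypothesis (II) for $S$ forbids either $P$ or $P'$ from being $S$-fixed, so $S$ must act transitively on $\{P,P'\}$; thus $\{P,P'\}$ is a short $S$-orbit of length $2$. Since the two short orbits of $S$ have sizes $\ell_1=|S|/2\geq 8$ and $\ell_2$, this forces $\ell_2=2$, contradicting $\ell_2>2$.

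With $\bar\gg\geq 2$ together with (I) and (II) for $\bar S$ all in hand, we are in Case (D) of the dichotomy preceding Theorem \ref{princ}, so $u$ is inductive, contrary to the non-inductivity hypothesis. The only delicate point is the verification of (II) for $\bar S$: it hinges on combining the fixed-point-freeness of $u$ (to guarantee that the fiber above a $\bar S$-fixed point has exactly two elements) with (II) for $S$ (to prevent $S$ from fixing either of those two points). The remainder is straightforward bookkeeping with the Hurwitz and Deuring-Shafarevich formulas.
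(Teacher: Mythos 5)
Your proof is correct and follows essentially the same route as the paper: the unramified Hurwitz formula to get $\bar{\gg}\geq 2$ and hypothesis (I) for $\bar{S}$, the observation that $\ell_2>2$ forces $|S|\geq 16$, and the key step that a $\bar{S}$-fixed point would lift to an $S$-invariant pair, forcing $\ell_2=2$. The only cosmetic difference is that you phrase the fiber argument as verifying (II) for $\bar{S}$ directly, while the paper phrases it as exploiting the failure of (II) implied by non-inductivity; the content is identical.
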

\begin{proof} Let $u$ be a non-inductive central involution of $S$ and assume on the contrary that $u$ fixes no point on $\cX$.  From (\ref{eq1}) applied to $U=\langle u \rangle$,
$${2\gg-2}=2(2\bar{\gg}-2),$$
where $\bar{\gg}$ is the genus of the quotient curve $\bar{\cX}=\cX/U$. Therefore, $\bar{\gg}\geq 2$ and
$$|\bar{S}|=\ha |S|>{\gg}-1=2(\bar{\gg}-1).$$
Furthermore, $\ell_2>2$ yields that $|S|\geq 16$, whence $|\bar{S}|\geq 8$. Since $u$ is non-inductive, $\bar{S}$ must have
a fixed point on $\bar{\cX}$. If $\bar{R}\in \bar{\cX}$ is such a point, and $R_1,R_2\in \cX$ are the points lying over $\bar{R}$, then $S$ leaves the pair $\{R_1,R_2\}$ invariant. Hence, $\Omega_2$ consists of the points $R_1$ and $R_2$. But then $\ell_2=2$, a contradiction.
\end{proof}
\begin{lemma}
\label{indu2} If $S$ has a non-inductive central involution then either $\ell_2=2$, or $\ell_2=\qa |S|\geq 4$. In the latter case, $\cX$ is a general,
bielliptic curve with $|S|=4(\gg-1)$.
\end{lemma}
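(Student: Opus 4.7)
The plan is to assume $\ell_2>2$ and derive that $\ell_2=\qa|S|\geq 4$ with $\cX$ a general bielliptic curve satisfying $|S|=4(\gg-1)$; the alternative $\ell_2=2$ is already in the statement and requires no argument.

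First I would pin down the fixed-point set of the non-inductive central involution $u$. By Lemma \ref{indu1}, $u$ must fix some point of $\cX$; Lemma \ref{lembis} forbids that fixed point from lying in $\Omega_1$ (which would force $\ell_2=2$); so $u$ fixes some point of $\Omega_2$, and Lemma \ref{lem3} then yields $\mathrm{Fix}(u)=\Omega_2$. Setting $U=\langle u\rangle$, $\bar{\cX}=\cX/U$ of genus $\bar\gg$, and $\bar S=S/U$, I would next eliminate non-inductive cases (B) and (C). Case (B) forces $|S|=8$, whence $\ell_2\leq\qa|S|=2$, a contradiction. In case (C), $\bar S$ would have a fixed point $\bar R$ on $\bar\cX$, whose fibre in $\cX$ is $S$-invariant of size $1$ or $2$: size $1$ means an $S$-fixed point on $\cX$ violating (II); size $2$ means an $S$-orbit of size at most $2$, which must coincide with $\Omega_2$ and so $\ell_2\leq 2$, again a contradiction. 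Hence case (A) applies: $\bar\gg\leq 1$.

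Now I would apply the Deuring--Shafarevich formula to $U$, which has exactly $\ell_2$ fixed points on $\cX$. This gives $\gamma-1=2(\bar\gamma-1)+\ell_2$; combining with \eqref{elle2} yields
$$\bar\gamma=\qa|S|-\ell_2+1.$$
Since $\ell_2\leq\qa|S|$ we obtain $\bar\gamma\geq 1$, but also $\bar\gamma\leq\bar\gg\leq 1$, so $\bar\gg=\bar\gamma=1$. Thus $\bar{\cX}$ is an ordinary elliptic curve, the double cover $\cX\to\bar{\cX}$ shows $\cX$ is bielliptic, and $\ell_2=\qa|S|$. Combined with $\ell_2>2$ this gives $\ell_2\geq 4$.

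To finish, I would invoke the Hurwitz formula for $U$. Since $\Omega_2$ is a single $S$-orbit and $u$ is central, the different exponent $d_P$ of $u$ takes a common value $d$ on $\Omega_2$, and by Proposition \ref{secondram} $d$ is even with $d\geq 2$. With $\bar\gg=1$, Hurwitz reads $2\gg-2=\ell_2 d=\tfrac{|S|d}{4}$. Hypothesis (I) then gives $|S|>2(\gg-1)=\tfrac{|S|d}{4}$, forcing $d<4$ and hence $d=2$. Consequently $\gg-1=\qa|S|=\gamma-1$, so $\gg=\gamma$ ($\cX$ is general) and $|S|=4(\gg-1)$. The main obstacle I expect is the fibre analysis used to rule out case (C): one must carefully examine both the totally ramified and the unramified behaviour of $\cX\to\bar{\cX}$ over a fixed point of $\bar S$ and verify that each is incompatible with either (II) or the assumption $\ell_2>2$.
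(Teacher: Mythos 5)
Your proof is correct and follows essentially the same route as the paper's: both identify the fixed-point set of $u$ as $\Omega_2$ via Lemmas \ref{indu1}, \ref{lem3} and \ref{lembis}, combine the Deuring--Shafarevich formula for $\langle u\rangle$ with \eqref{elle2}, use the (A)--(D) trichotomy for the non-inductive central involution to force $\bar{\gg}=1$, and finish with the Hurwitz formula together with Proposition \ref{secondram} to get $d_P=2$ and hence $\gg=\gamma$ and $|S|=4(\gg-1)$. The only difference is organizational: the paper splits on $\bar{\gamma}\in\{0,1,\geq 2\}$ first, whereas you eliminate cases (B) and (C) first and then let Deuring--Shafarevich pin down $\bar{\gamma}=\bar{\gg}=1$; the ingredients are identical.
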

\begin{proof} 
 Suppose that $\ell_2>2$ and take a non-inductive central involution $u$ of $S$. By Lemmas \ref{indu1}, \ref{lem3} and \ref{lembis}, the set of fixed points $u$ is $\Omega_2$.
{}From (\ref{eq2deuring}) applied to $U=\langle u \rangle$,
\begin{equation}
\label{eq02}
\gamma-1= 2(\bar{\gamma}-1)+\ell_2.
\end{equation}
where $\bar{\gamma}$ is the $2$-rank of the quotient curve $\bar{\cX}=\cX/U$.
Comparing this with (\ref{elle2}) shows that $\bar{\gamma}=0$ is inconsistent with $\ell_2\leq \qa |S|$. So, the case $\bar{\gamma}=0$ does not actually occur.

If $\bar{\gamma}=1$ then (\ref{eq02}) and (\ref{elle2}) give $\ell_2=\qa|S|$. In this case, $\bar{\gg}\geq \bar{\gamma}\geq 1$. {}From (\ref{eq1}) applied to $U=\langle u \rangle$,
$$2\gg-2=2(2\bar{\gg}-2)+\qa\,|S|d_P$$
where $P$ is any point in $\Omega_2$. {}From Proposition \ref{secondram}, either $d_P=2$ or $d_P\geq 4$. The latter cannot actually occur by (I). %%%modifica 28 marzo 2011
Since the central involution $u$ is non-inductive, one of the cases (A),(B) and (C) occurs. Since $\qa|S|\geq 4$, that is, $|\bar{S}|=\ha|S|\geq 8$, case (B) is ruled out. If case (C) occurred then $S$ would have an orbit of length $2$, contradicting the hypothesis $\ell_2>2$. Therefore, case (A) holds. As $\bar{\gg}\geq \bar{\gamma}=1$, we have that $\bar{\gg}=1$.
This implies that $|S|=4(\gg-1)=4(\gamma-1)$ and hence
$\cX$ is a general curve. 
%%modifica 28 marzo 2011
%Further, $\bar{\gamma}=1$. 
Therefore, $\cX$ is bielliptic as $u$ is an involution and $\cX/U$ is an elliptic curve.

Let $\bar{\gamma}\geq 2$. This time, (\ref{eq02}) and (\ref{elle2}) give
\begin{equation}
\label{eq211}
\ell_2=\qa |S|-(\bar{\gamma}-1).
\end{equation}
{}From this, $|S|\geq 16$ and hence $|\bar{S}|\geq 8$. Also, $|\bar{S}|=\ha |S|>g-1>2(\bar{\gg}-1)$.
Since $u$ is a non-inductive central involution, $\bar{S}$ has a fixed point on $\bar{\cX}$. But this implies that $\ell_2=2$ as in the final part in the proof of Lemma \ref{indu1}.
\end{proof}

\section{Proof of Theorem \ref{princ1}}
\label{dim1}
We prove two theorems. They together with Lemmas \ref{indu1} and \ref{indu2} provide a proof of Theorem {\ref{princ1}}.
%%%%%%%%%%%%%%%%%%%%%%%%%%%%
\begin{theorem}
\label{case(II)} Let $\cX$ be a curve of genus $\gg\geq 2$ defined over an algebraically closed field $\K$ of characteristic $2.$ Assume that
$\aut(\cX)$ has a subgroup $S$ of order a power of $2$ satisfying both hypotheses {\rm{(I)}} and {\rm{(II)}}. If $\ell_2=2$ then case {\rm{(ii)}} of Theorem {\rm\ref{princ1}} holds.
\end{theorem}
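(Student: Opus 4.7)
The plan is as follows. Since $\ell_2=2$, equation \eqref{elle2} gives $|S|=2\gamma+2$, and combined with (I) this forces $\gamma\ge \gg-1$. Writing $\Omega_2=\{P_1,P_2\}$, the fact that $|\Omega_2|=2$ makes the stabilizer $D:=S_{P_1}$ a normal subgroup of index two in $S$, and $D=S_{P_2}$ as well. The first step is to produce a complement of $D$. Since $|\Omega_1|=|D|$, the action of $D$ on $\Omega_1$ is either semi-regular, or breaks into two orbits of length $|D|/2$. In the latter situation, applying \eqref{eq2deuring} to $D$ acting on $\cX$, with short $D$-orbits $\{P_1\}$, $\{P_2\}$ and the two halves of $\Omega_1$, yields
\[
\gamma-1=(\gamma+1)(\bar\gamma_D-1)+3\gamma+1,
\]
forcing $\bar\gamma_D=-1$, which is impossible. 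Therefore $D$ acts semi-regularly on $\Omega_1$, and for any $P\in\Omega_1$ the stabilizer $S_P$ (of order $2$) satisfies $S_P\cap D=1$; its generator $w$ is thus an involution outside $D$, so $S=D\rtimes\langle w\rangle$.

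Next I apply the Hurwitz formula \eqref{eq1} to $S$ acting on $\cX$. For $P\in\Omega_1$, the proof of Lemma~\ref{lem001} gives $d_P=2$, while for $Q\in\Omega_2$, using $D=D^{(1)}$ of order $\gamma+1$, we have $d_Q=2\gamma+\sum_{i\ge 2}(|D^{(i)}_{P_1}|-1)$. Substituting $|S|=2\gamma+2$, a short calculation shows that $\bar\gg\ge 1$ would imply $\gg\ge 3\gamma+2$, inconsistent with $\gamma\ge\gg-1$. Hence the genus $\bar\gg$ of $\cX/S$ is zero, and Hurwitz reduces to
\[
\gg-\gamma=\sum_{i\ge 2}\bigl(|D^{(i)}_{P_1}|-1\bigr).
\]
Since $\gamma\in\{\gg-1,\gg\}$, the right-hand side is $0$ or $1$. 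Excluding the value $1$ is the delicate step: if $|D^{(2)}_{P_1}|=2$ and $|D^{(3)}_{P_1}|=1$, then the unique involution $v\in D^{(2)}_{P_1}$ satisfies $\langle v\rangle^{(i)}=\langle v\rangle\cap D^{(i)}_{P_1}$, so $\langle v\rangle^{(2)}=\langle v\rangle$ but $\langle v\rangle^{(3)}=1$; hence $i=3$ is the smallest index for which the ramification group of $\langle v\rangle$ at $P_1$ becomes trivial, contradicting Proposition~\ref{secondram}. Therefore $\gamma=\gg$, $|S|=2\gg+2$, and $D^{(2)}_{P_1}=1$.

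Finally, since $D=D^{(1)}_{P_1}$ and $D^{(2)}_{P_1}=1$, the first ramification quotient $D^{(1)}/D^{(2)}\cong D$ is elementary abelian, establishing the structural part of Theorem~\ref{princ1}(ii). If in addition $S$ is abelian, then $w$ centralizes $D$ and so $S=D\times\langle w\rangle$ is itself elementary abelian; moreover $w\in Z(S)$ is then a central involution fixing the point $P\in\Omega_1$, so Lemma~\ref{lembis} forces $\cX$ to be hyperelliptic. The main obstacle in the whole argument is the parity obstruction ruling out $\gamma=\gg-1$: once the ramification filtration of $D$ at $P_1$ is pinned down via Proposition~\ref{secondram}, the remaining conclusions follow by straightforward bookkeeping.
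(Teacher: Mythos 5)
Your proof is correct and follows essentially the same route as the paper: the dichotomy $|S|=2\gg$ versus $|S|=2\gg+2$ from \eqref{elle2} and (I), the exclusion of $|S|=2\gg$ via the parity constraint of Proposition~\ref{secondram} applied to the order-two second ramification group at a point of $\Omega_2$, the identification of $D=S_{P_1}$ as an elementary abelian complemented subgroup, and Lemma~\ref{lembis} for the hyperelliptic conclusion in the abelian case. The one place you genuinely diverge is in ruling out the possibility that $D$ splits $\Omega_1$ into two half-orbits: you dispose of it up front by a direct Deuring--Shafarevich count giving the absurd value $\bar\gamma_D=-1$, whereas the paper defers this and excludes it by passing to the quotient $\cX/D$ and invoking Proposition~\ref{zeroprank}. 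Your count is the cleaner of the two (note that the paper's displayed application of \eqref{eq2deuring} to $D$ records the contribution of the two fixed points as $|D|-2$ rather than $2(|D|-1)$; with the correct value the numerics already force $D$ to be semi-regular on $\Omega_1$, exactly as your computation shows), so this step of yours is a modest simplification rather than a gap.
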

\begin{proof} Hypothesis {(I)} together with (\ref{elle2}) yield $$|S|>2(\gg-1)\geq 2(\gamma-1)=|S|-4.$$ Since $|S|$ is a power of $2$ bigger than four, two possibilities arise only. Either
\begin{itemize}
\item[(A)] $|S|=2\gg$ and $\gg=\gamma+1$, or
\item[(B)] $|S|=2\gg+2$ and $\gg=\gamma$.
\end{itemize}
{In both case, from \eqref{eq1} applied to $S$ we deduce that the genus of the quotient curve $\cX/S$ is equal to $0$.}

To rule out case (A), suppose on the contrary that $\gg=\ha|S|$. Lemma \ref{lem001} for $\gg=\ha\,|S|$ implies that $|S_Q^{(2)}|=2$ but $|S_Q^{(3)}|=1$, 
{which contradicts Proposition \ref{secondram}}.

In case (B), Lemma \ref{lem001} implies that the second ramification group $S_R^{(2)}$ is trivial at every $R\in \Omega_1\cup \Omega_2$ and hence at every point in $\cX$.  Also, since $\ell_2=2$, the stabilizer $D$ of $Q\in \Omega_2$ in $S$ is an elementary abelian group $D$ of order $\ha|S|$. From (\ref{eq2deuring}) applied to $D$,
$$\gamma-1=|D|(\tilde{\gamma}-1)+|D|-2+\sum_{i=1}^k (|D|-l_i)$$
where $l_1,\ldots,l_k$ are {sizes of} the short orbits {$\Lambda_1,\ldots,\Lambda_k$} of $D$ disjoint from $\Omega_2$. This together with (\ref{elle2}) yield that either $\bar{\gamma}=0$, $k=2$ and {$l_1=l_2=\ha\,|D|$}, or no non-trivial element of $D$ fixes a point of $\cX$ outside $\Omega_2$.

We show that the former case cannot actually occur. The factor group $\bar{S}=S/D$ is a $\K$-automorphism group of the quotient {curve} $\bar{\cX}=\cX/D$. Set $\Omega_2=\{P_1,P_2\}$. Let $\bar{P}_1$ and $\bar{P}_2$ be the points of $\bar{\cX}$ lying under $P_1$ and $P_2$, respectively. Since $D$ fixes both $P_1$ and $P_2$ while $S$ interchanges them, $\bar{S}$ interchanges $\bar{P}_1$ with $\bar{P}_2$. In particular, these points of $\bar{\cX}$ are not fixed by $\bar{S}$. Assume that {$l_1=l_2=\ha|D|$}. Let $\bar{\Lambda}_1, \bar{\Lambda}_2$ be the points of $\bar{\cX}$ under the $D$-orbits {$\Lambda_1$ and $\Lambda_2$}. Since
{$\Omega_1=\Lambda_1\cup \Lambda_2$} and $S$ acts transitively on $\Omega_1$, $\bar{S}$ interchanges $\bar{\Lambda}_1$ with  $\bar{\Lambda}_2$. Since $\Omega_1$ and $\Omega_2$ are the only short orbits of $S$, it turns out that $\bar{S}$ has no fixed point on $\bar{\cX}$. On the other hand, Proposition \ref{zeroprank} shows that $\bar{S}$ must have a fixed point on $\bar{\cX}$, a contradiction.

For a point $P\in \Omega_1$, let $u\in S$ be the unique non-trivial element in $S_P$. Then $u$ is an involution not contained in $D$. Let $U=\langle u \rangle$. Then $S=\langle D, U\rangle$. More precisely, since $D$ and $U$ have trivial intersection, $S=D\rtimes U$. If $S$ is abelian, then $u$ is a central involution, and hence $\cX$ is hyperelliptic by  Lemma \ref{lembis}. This completes the proof.
\end{proof}
%%%%%%%%%%%%%%%%%%%%%%%%%%%
\begin{rem}
%modifica 20marzo 2011
\label{reml=2}
If $|S|=8$, then $\ell_2=2$ and Lemma \ref{group8} yields that $S$ is either elementary abelian, or dihedral.
\end{rem}

%%%%%%%%%%%%%%%%%%%%%%
\begin{theorem}
\label{case(I)} Let $\cX$ be a curve of genus $\gg\geq 2$ defined over an algebraically closed field $\K$ of characteristic $2.$ Assume that
$\aut(\cX)$ has a subgroup $S$ of order a power of $2$ satisfying both hypotheses {\rm{(I)}} and {\rm{(II)}}. If $\ell_2=\qa |S|>2$ and some central involution of $S$ fixes a point, then case {\rm{(i)}} of Theorem {\rm\ref{princ1}} holds.
\end{theorem}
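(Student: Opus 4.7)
The strategy is first to use Lemma~\ref{indu2} to pin down the geometric setup, and then to read off the structure of $S$ from the structure of $\bar S=S/U$ acting on the quotient elliptic curve.

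I would begin by choosing a central involution $u$ of $S$ that fixes a point. Lemmas~\ref{lem3} and~\ref{lembis} force $\mathrm{Fix}(u)=\Omega_2$ (the hypothesis $\ell_2>2$ rules out fixed points in $\Omega_1$), while the Deuring--Shafarevich and Hurwitz formulas applied to $U=\langle u\rangle$ give $\bar\gamma=\bar\gg=1$. Hence $u$ is non-inductive, and Lemma~\ref{indu2} delivers the first part of case~(i): $|S|=4(\gg-1)$, $\gamma=\gg$, and $\cX$ is bielliptic with $\bar\cX=\cX/U$ an ordinary elliptic curve. Since $\Aut(\bar\cX)=\bar\cX\rtimes\langle -1\rangle$ has locally cyclic $2$-primary translation subgroup, the intersection $T=\bar S\cap\bar\cX$ is a cyclic $2$-group; it has index $2$ in $\bar S$ because the stabilizers on $\bar\Omega_1,\bar\Omega_2$ supply non-translation elements. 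Therefore $\bar S\cong\mathbf{D}_{\gg-1}$, and a fixed-point count on $\bar\cX$ places the stabilizers of $\bar\Omega_1$ and those of $\bar\Omega_2$ in the two distinct conjugacy classes of non-central involutions of $\bar S$.

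A Hurwitz computation for $S$ itself, combined with Proposition~\ref{secondram}, then forces $d_P=2$ for $P\in\Omega_1$, the genus of $\cX/S$ to be $0$, $d_Q=6$, and $S_Q^{(i)}=1$ for all $i\ge 2$. Since $S_Q=S_Q^{(1)}/S_Q^{(2)}$ embeds in the additive group of the residue field, $S_Q$ is Klein four. Consequently both conjugacy classes of non-central involutions of $\bar S$ lift to involutions of $S$: one via the generators of the order-$2$ stabilizers $S_P$, the other via the involutions of $S_Q$ distinct from $u$.

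To finish, I would invoke Lemma~\ref{lemgr1} applied to a maximal cyclic subgroup of $\bar S$: its preimage in $S$ is either cyclic of order $2(\gg-1)$ (Case~A) or $E\times U$ with $E$ cyclic of order $\gg-1$ (Case~B). In Case~A, Lemma~\ref{gr0} restricts $S$ to dihedral, semi-dihedral, generalized quaternion, or the ``fourth'' group of that lemma; the quaternion alternative is ruled out by the presence of at least three involutions in $S$, while in the semi-dihedral and fourth groups a direct computation $(c^i w)^2 = u^i w^2$ shows that only one parity of $i$ produces involutions, so only one of the two $\bar S$-classes of non-central involutions lifts to involutions in $S$, contradicting the preceding step. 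Hence $S\cong\mathbf{D}_{2(\gg-1)}$, yielding~(ia). In Case~B, any lift $w$ of the reflection $\bar w$ satisfies $(tw)^2 = w^2$ for every $t\in E\times U$; therefore $w^2=u$ is impossible, so $w^2=1$ and $S$ has the semidirect product structure of~(ib). The identification of the two central inductive involutions of~(ib) then follows by reapplying the Deuring--Shafarevich and Hurwitz arguments of the first paragraph to the quotient of $\cX$ by the order-$2$ element $e_0\in E$, which is fixed-point-free. The main obstacle in this plan is this last paragraph: ruling out the semi-dihedral and the fourth group in Case~A, where one must simultaneously track both $\bar S$-classes of non-central involutions and exploit the forced involutory structure of both $S_P$ and $S_Q$.
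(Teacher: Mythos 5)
Your architecture is sound and you reach the right conclusions, but your route genuinely differs from the paper's in two places, and a few of your asserted facts still need proofs.

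The difference in route: the paper works group-theoretically throughout and only identifies the quotient $\cX/\langle u\rangle$ as elliptic at the very end. Its primary case split is on $|\Omega_w|\in\{2,\ge4\}$; when $|\Omega_w|=2$ it applies Suzuki (Proposition \ref{suzukicl}) to $S$ itself and excludes the semi-dihedral alternative by a ramification computation at $\Omega_2$, and when $|\Omega_w|\ge4$ it applies Suzuki to $\bar S=S/\langle u\rangle$ and excludes semi-dihedral $\bar S$ by counting order-$4$ subgroups through $u$ against Lemma \ref{gr0}. You instead establish at the outset that $\cX/\langle u\rangle$ is an ordinary elliptic curve, read off that $\bar S$ embeds in $\bar\cX(\K)\rtimes\langle-1\rangle$ with cyclic translation part, hence is dihedral, and split on whether $\tau^{-1}(\bar C)$ is cyclic. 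This bypasses Suzuki for $\bar S$ entirely, and your exclusion of a semi-dihedral (or modular) $S$ in Case A — both reflection classes of $\bar S$ must lift to involutions, one from $S_P$ and one from the Klein four group $S_Q$, while in $\mathbf{SD}$ only one parity class of $c^iw$ consists of involutions — checks out and is a legitimate replacement for the paper's argument.

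The gaps. First, Deuring--Shafarevich gives $\bar\gamma=1$ and Hurwitz (with Proposition \ref{secondram} and (I) forcing $d_P=2$ on $\Omega_2$) gives $\gg-1=2(\bar\gg-1)+\qa|S|$, but these alone do \emph{not} yield $\bar\gg=1$: you must still exclude $\bar\gg\ge2$, e.g.\ by Nakajima's bound (Proposition \ref{nakajimabound}) applied to $\bar S$ acting on $\bar\cX$, which for $\bar\gg\ge 2$ gives $\ha|S|\le 4(\bar\gg-1)=2(\gg-1)-\ha|S|$, contradicting (I). Everything downstream (ordinariness of the elliptic quotient, $|S|=4(\gg-1)$) hangs on this. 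Second, in Case B the identity $(tw)^2=w^2$ presupposes that $w$ inverts $E\times\langle u\rangle$ elementwise, whereas a priori $wtw^{-1}\in\{t^{-1},t^{-1}u\}$ only; the identity is also unnecessary, since you may simply take $w$ to be the involution generating $S_P$. More importantly, (ib) requires the two involutions of $E\times\langle u\rangle$ other than $u$ to be \emph{central} and \emph{inductive}, and you assert both. Centrality does follow cheaply ($e_0=e^{m/2}$ with $m/2$ even, so $we_0w^{-1}=(e^{-1}u^{\epsilon})^{m/2}=e_0$), but fixed-point-freeness needs an argument: freeness on $\Omega_1$ because $C$ acts sharply transitively there (as $w\notin C$), and freeness on $\Omega_2$ via Deuring--Shafarevich applied to $\langle u,e_0\rangle$, after which Lemma \ref{indu1} gives inductivity. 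All of these are provable with the tools at hand, so the gaps are repairable, but as written they are assertions rather than proofs.
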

\begin{proof}
{}From Lemmas \ref{lem3} and \ref{lembis}, there exists an involution $u\in Z(S)$ which fixes $\Omega_2$ pointwise but no point from $\Omega_1$. Furthermore, $|S|\geq 16$.

Let $W\in \Omega_1$. By (\ref{elle1}) the stabilizer $S_W$ of $W$ in $S$ has order
two. Hence $S_W$ consists of an involution $w$ together with
the identity. Note that $w\neq u$ by Lemma \ref{lembis}. Let $\Omega_w$ be the set of all fixed points of
$w$. Since both $\Omega_1$ and $\Omega_2$ have even size,
$\Omega_w$ also has even size.
%MODIFICA 10 maggio 2008

If $|\Omega_w|=2$ then $|C_S(w)|=4$ by Lemma \ref{lemgr}, and Proposition \ref{suzukicl} yields that $S$ is either dihedral, or
semi-dihedral. The former case gives (ia). We must show that the latter case cannot actually occur.

Suppose on the contrary that $S\cong {\bf{SD}}_m$ with $m=|S|$. Since $|\Omega_w|=2$, the conjugacy class of $w$ in $S$ consists of $\qa|S|$ involutions. Since $u$ is a further involution of $S$,
{Lemma} \ref{gr0} yields that these $\qa|S|+1$ involutions are all the involutions in $S$. Therefore, the stabilizer {$S_Q$} of any point $Q\in \Omega_2$ has a unique involution, namely $u$.
{}From (\ref{eq1}) applied to $S_Q$,
\begin{equation}
\label{eq16}
2\gg-2\geq |{S_Q}|(2{\tilde{\gg}}-2)+\sum_{P\in \Omega_2} d_P.
\end{equation}
Here {$S_Q$} is a cyclic group of order $4$. Now, ${|S_Q|}=|S_Q^{(1)}|=4$ and {since the factor group $S_Q^{(1)}/S_Q^{(2)}$} is elementary abelian, either $S_Q^{(2)}=S_Q$ or
${S_Q^{(2)}}=\langle u \rangle$. From Proposition \ref{secondram}, {in the latter case} $S_Q^{(3)}=S_Q^{(2)}$ {holds}. Therefore, $d_Q\geq 8$. Since $\Omega_2$ has even size, {$S_Q$} fixes at least one more point $Q'\in \Omega_2$.
By the previous argument, $d_{Q'}\geq 8$. For every other point $P\in \Omega_2$, we have $d_P\geq 4$. From (\ref{eq16}), $2\gg-2\geq |S|$, a contradiction.

Assume now that $|\Omega_w|\geq 4$. Then
$|C_S(w)|\geq 8$, see Lemma \ref{lemgr}.
%%FINE

Let $d={|\Omega_w\cap\Omega_2|}$. Consider the subgroup $M$ of $S$ of
order $4$ generated by $u$ and $w$. Let $\gamma_w$ be the $2$-rank
of the quotient curve $\cX/M$. From (\ref{eq2deuring}) applied to $M$,
\begin{equation}
\label{modeq1} \gamma-1\geq
4(\gamma_w-1)+(|\Omega_w|-d)+3d+(\qa|S|-d)=4(\gamma_w-1)+|\Omega_w|+d+\qa|S|.
\end{equation}
{By \eqref{elle2}, $d=0,\,|\Omega_w|=4$,
$\gamma_w=0$, and equality holds in \eqref{modeq1}. Therefore,}
 the following assertions hold.
\begin{itemize}
    \item[(a)] $M$ has exactly $\eit |S|+2$ short orbits, each of size two; namely two orbits of $\langle u \rangle$ in $\Omega_w$ and each of the orbits of $\langle w \rangle$ in $\Omega_2$.
    \item[(b)] $uw$ has no fixed point on $\Omega_1$.
    \item[(c)] $\Omega_w\subseteq \Omega_1$ with
\begin{equation}
\label{nagabiseq3} |\Omega_w|=4.
\end{equation}
\end{itemize}
Since $|S|\geq 16$, from (\ref{nagabiseq3}) it follows that ${|\Omega_w|}<\ha\,|S|={|\Omega_1|}$. This together with (b) imply that $S$ has at least five involutions.

By Lemma \ref{lemgr},
\begin{equation}
\label{nagabiseq31} |C_S(w)|=8.
\end{equation}
Since $|S|\geq 16$, this yields that $S$ is not abelian.

Let $\tau$ be the natural group homomorphism from $S\to \bar{S}$ where $\bar{S}$ is the factor group $S/\langle u \rangle$. Note that $\bar{S}$ is a $\K$-automorphism group of {the quotient curve}
$\bar{\cX}{=\cX/S}$ of order at least eight, and we are going to show that $\bar{S}$ is  either a dihedral or a semi-dihedral group.

By Lemma \ref{lem3}, $u$ fixes no point on $\Omega_1$. Therefore, $|\bar{\Omega}_1|=\ha|\Omega_1|$ where the set $\bar{\Omega}_1$ consists of all points of $\bar{\cX}$ lying under the points of $\Omega_1$ with respect to the {covering $\cX\to\bar{\cX}$}. Also, $\bar{S}$ is a transitive permutation group on $\bar{\Omega}_1$. Take two points, $P,R\in \Omega_w$ such that $R\neq u(P)$. Then
\begin{equation}
\label{eq04}
\Omega_w=\{P,u(P),R,u(R)\}.
\end{equation}
Let $\bar{P}$ and $\bar{R}$ be the points of $\bar{\cX}$ lying under $P$ and $R$, respectively.
Then $\bar{P},\bar{R}$ are the only fixed points of $\bar{w}=\tau(w)$ on $\bar{\Omega}_1$. From Lemma \ref{lemgr},
$|C_{\bar{S}}(\bar{w})|=4$. Proposition \ref{suzukicl} yields that $\bar{S}$ is either dihedral, or
semi-dihedral. These two
possibilities are investigated separately.

Assume that $\bar{S}$ is dihedral. Let $\bar{C}$ be a (maximal) cyclic subgroup of $\bar{S}$ of order $\qa |S|$. Set $C=\tau^{-1}(\bar{C})$. From Lemma \ref{lemgr1}, either $C$ itself cyclic, or $C=E\times \langle u \rangle$ with a cyclic subgroup $E$.

If $w\in C$, then $u\in C$ implies that $C$ has at least two involutions. Hence $C=E\times \langle u \rangle$. Furthermore, the only involution in $E$  is either $w$ or $uw$.
{}From Lemma \ref{lembis} and assertion (b), neither $u$ nor $uw$ has a fixed point on $\Omega_1$. Suppose that $S$ has an involution $w'$, $w'\neq w$, with a fixed point in $\Omega_1$. Since $S$ is transitive on $\Omega_1$ and the $1$-point stabilizer of $S$ on $\Omega_1$ has order two, we have that $w$ and $w'$   are conjugate under $S$. Since $C$ is a (normal) subgroup of $S$ of index $2$ and $w\in C$, this implies that $w'$ is also in $C$. But then we would have either $w'=u$ or $w'=u${$w$}, a contradiction. Therefore, every point in $\Omega_1$ must be fixed by $w$. Hence $\Omega_w=\Omega_1$. {}From (\ref{nagabiseq3}), $|\Omega_1|=4$ and hence $|S|=8$, a contradiction.

If $w\not \in C$, then no non-trivial element in $C$ fixes a point
in $\Omega_1$, and hence $C$ is sharply transitive on $\Omega_1$.
Bearing (\ref{eq04}) in mind, take $h\in C$
such that $h(P)=R$. Then $h\neq u$ and $hwh^{-1}(R)=R$. Since the stabilizer of $R$ in $S$ is generated by $w$, this yields that $h\in C_S(w)$ with $h\neq w$.
%{}From Lemma \ref{lemgr}, $t$ preserves $\Omega_1$.
{Moreover,
$h\in Z(S)$ as $S$ is generated by an abelian group $C$ containing $h$ together with $w$. As $h\neq u$, the center $Z(S)$ contains at least two non-trivial elements, whence $S$ can be neither dihedral or semidihedral. By Lemma \ref{gr0},  $C$ is not cyclic, and therefore $C=E\times \langle u \rangle$ holds.}
Since $h\in Z(S)$,  $h$ preserves $\Omega_w$, and
%Moreover,
$$h(u(P))=(hu)(P)=(uh)(P)=u(R).$$
Therefore, the permutation induced by $h$ on $\Omega_w$ is either the product of the transpositions $(PR)$ and $(u(P)u(R))$, or it is the $4$-cycle
$(PRu(P)u(R))$. In the latter case, $h^2=u$ as $C$ is sharply transitive on $\Omega_1$. Actually this is  impossible, because the square of every element of $E\times \langle u \rangle$ of order $\geq 4$ is in $E$, and hence distinct from $u$. Therefore,  $h$ is an involution distinct from $u$. 
%%%%%%%% modifica 12 novembre 2009
Suppose that $h$ fixes a point on $\cX$. Since $h\in Z(S)$ and $h$ does not fix $P$, $h$ has no fixed point on $\Omega_1$. Therefore, $h$ fixes
a point in $\Omega_2$, and hence every point in $\Omega_2$ is fixed by $h$.  Let $L=\langle h,u \rangle$. If
$\tilde{\gamma}$ is the $2$-rank of the quotient curve $\tilde{\cX}=\cX/L$, from (\ref{elle2}) and (\ref{eq2deuring}) applied to $L$,
$$\qa |S|=\gamma-1\geq 4(\tilde{\gamma}-1)+\tqa |S|,$$
whence $|S|\leq 8$, a contradiction. Therefore $h$ is fix{ed}--point--free on $\cX$. From Lemma \ref{indu1}, $h$ is an inductive central involution of $S$.

{Note that $u,h$ and $uh$ are the only three involutions in $C$, and each such involution is central in $S$. As $\bar S$ is dihedral, any other involution in $S$ is not central.}
We show that {$uh$} is fix{ed}--point--free on $\cX$, as well. Suppose on the contrary that $P\in \Omega_2$ is fixed by {$uh$}.
Since {$uh$}$\in Z(C)$, the orbit $\Delta$ of $P$ under $C$ is pointwise fixed by {$uh$}. We have that $|C_P|\leq 4$, as $C_P$ is a subgroup of $S_P$ and $|S_P|=4$. Actually, $|C_P|=4$ since $u,${$uh$}$\in C_P$ and {$uh$} $\neq u$. Hence, $C_P=\{1,u,h,uh\}$, and  $|\Delta|=\eit\,|S|$. Now, choose $Q\in \cX$ from $\Omega_2\setminus \Delta$, and $s\in S$ such that $s$ takes $P$ to $Q$. Then $suhs^{-1}$ fixes $Q$. Since $uh\in C$ and $C$ is a normal subgroup of $S$, this implies that $suhs^{-1}\in C$. Hence, either $suhs^{-1}=h$ or $suhs^{-1}=uh$. In both cases, $C_P=C_Q$.
{}From (\ref{eq2deuring}) applied to $C_P$,
$$\qa\,|S|=\gamma-1=4(\widehat{\gamma}-1)+\teit\,|S|+\teit\,|S|,$$
where $\widehat{\gamma}$ is the $2$-rank of the quotient curve $\widehat{\cX}=\cX/C_P$. But this is only possible for $|S|=8$, a contradiction.

{}From Lemma \ref{indu1}, not only $h$ but also {$uh$} is an inductive central involution. On the other hand, $u$, the third central involution of $S$, is not inductive. In fact, from (\ref{eq1}) applied to $U=\langle u \rangle $ it follows that the genus of the quotient curve $\cX/U$ is equal to $1$. This gives case (ib).

To rule out the case that $\bar{S}$ is semi-dihedral, we give a
lower bound for the number $n_4$ of subgroups of $S$ of order $4$
which contains $u$.

By (\ref{elle1}) and (\ref{nagabiseq3}), $S$
has $\eit|S|$ pairwise distinct subgroups $M=\{1,w,u,uw\}$ when $w$ ranges over the
involutions in $S$ fixing a point of $\Omega_1$.

Since $\ell_2=\qa |S|$ and $u$ fixes $\Omega_2$ pointwise,  the stabilizer
$S_Q$ with $Q\in \Omega_2$ contains $u$ and has order $4$. Let $r$ be the number of
fixed points of $S_Q$ in $\Omega_2$. Obviously $r\geq 1$.
Let $\tilde{\gamma}$ the $2$-rank of the quotient curve $\tilde{\cX}=\cX/S_Q$. {}From (\ref{eq2deuring}) applied to $S_Q$,
$$\gamma-1\geq 4(\tilde{\gamma}-1)+3r+(\qa|S|-r)=4(\tilde{\gamma}-1)+\qa|S|+2r.$$
Since (I) holds, (\ref{elle2}) and $r\geq 1$  yield that $r=2$ and $\tilde{\gamma}=0$.

Since $|S|\geq 16$, this shows that
there is point $R\in\Omega_2$ such that $S_Q\neq S_R$. Therefore,
$$n_4\geq \eit|S|+2.$$
As a consequence, $\bar{S}$ more than $\eit|S|+1=\qa|\bar{S}|+1$ pairwise distinct
involutions. By Proposition \ref{gr0}, $\bar{S}$ is not a semi-dihedral group.
\end{proof}
%%%%%%%%%%%%%%%%%%%%
\section{Some explicit examples}
\label{expex}
In this section, $\K$ is the algebraic closure of the finite field ${\mathbb{F}}_q$ of order $q$ where $q\geq 4$ is a power of $2$, and $w$ a primitive element of ${\mathbb{F}}_q$.
We exhibit several curves with explicit equations that realize the cases in Theorem \ref{princ1}.
\subsection{Case (ia)} In Section \ref{biellipex}, an infinite family of curves $\cX_k$ of type (ia) is constructed. Here we single out the case of $\gg=9$,  and illustrate some computational results for $q=16$. Let $\bar{\cX}$ be the elliptic curve of equation $Y^2+XY+X^3+{\mu}=0,$ and $K(\cX)=K(x,y)$ with $y^2+xy+x^3+{\mu}=0$ is its function field.

In the first construction, take {$\mu$ a primitive element in $\mathbb F_{16}$, and} $k=1$ in (\ref{eq18marzo}). Then the definition (\ref{eq22marzo}) reads
$e_1=(\delta/\xi)y+(\omega/\xi)$ with
$$\delta=\mu x^{13} + \mu^2x^{11} + \mu^{11}x^9 + \mu^{13}x^7 + \mu^{13}x^5 + \mu^5x^3
    + \mu^{11}x,$$
$$\xi=x^{16} + \mu^4x^{12} + \mu x^8+ \mu^6x^4 + \mu^4,$$ and
$$
\begin{array}{lll}
\omega&=&\mu^4x^{16} + \mu x^{15} + \mu^{11}x^{14} + \mu^2x^{13} + \mu^7x^{12} + \mu^{11}x^{11}
    + \mu^5x^{10} + \mu^{13}x^9 + \\ &&\mu^{14}x^8 + \mu^{13}x^7 + \mu^{12}x^6 + \mu^5x^5
    + \mu^3x^4 + \mu^{11}x^3 + \mu^{14}x^2 + \mu^8.
\end{array}
    $$
Let $\cX$ be a non-singular model of the bielliptic function field which is the extension of $K(\bar{\cX})$ by adjoining $z$ where $z^2+z+e_1=0$.
Eliminating $y$ from $z^2+z+e_1=0$ and $y^2+xy+x^3+\mu=0$, gives an affine equation of a plane (singular) model of $\cX$:
\begin{eqnarray*}
F(X,Z)&=& Z^4X^{28} + \mu Z^4X^{26} + \mu^7Z^4X^{24} + \mu^3Z^4X^{22} + \mu^8Z^4X^{20} +
\\ & & \mu^7Z^4X^{18} + \mu^4Z^4X^{16} + \mu^8Z^4X^{14} + \mu^6Z^4X^{12} +\mu^{13}Z^4X^{10} + Z^4X^8 + 
\\ & & \mu^8Z^4X^6 + \mu^9Z^4X^4 + Z^4X^2+\mu^{11}Z^4 + Z^2X^{28} + \mu^7Z^2X^{24} + \mu^13Z^2X^{22} +
\\ & & \mu^{11}Z^2X^{20} + \mu^{12}Z^2X^{16} + \mu^4Z^2X^{14} + \mu^{11}Z^2X^{12} +\mu^{10}Z^2X^{10} + \mu^3Z^2X^8 + 
\\ & & \mu^8Z^2X^6 + \mu^{11}Z^2X^4 +        \mu^{14}Z^2X^2 + \mu^{11}Z^2 + \mu ZX^{26} + \mu^8ZX^{22} + \mu^7ZX^{20} +
\\ & &  \mu^7ZX^{18} + \mu^6ZX^{16} + \mu^5ZX^{14} + \mu ZX^{12} + \mu^9ZX^{10} +   \mu^{14}ZX^8 + \mu^2ZX^4 + 
\\ & & \mu^3ZX^2 +\mu^8X^{28} + \mu^3X^{26} +  \mu^{13}X^{24} + \mu^9X^{22} + \mu^{13}X^{18} + \mu^8X^{16} + 
\\ & & \mu^5X^{14} +\mu^5X^{12} + \mu^{12}X^{10} + \mu^{10}X^8 + X^6 + \mu^5X^4 + \mu^4=0.
\end{eqnarray*}
Obviously, $\cX$ is defined over ${\mathbb{F}}_{16}$. According to the results in Section \ref{biellipex}, it has genus and $2$-rank equal to $9$ and a dihedral $\K$-automorphism group of order $32$. Therefore $\cX$ is an example of case (ia) of Theorem \ref{princ1}.

For the second construction we relay on Lemma \ref{lemwitt}. In (\ref{eq18marzo}), let $k=7$,  and replace $d=x/{\rm{Tr}}_g(x)$ with $d$ as in (ii) of Lemma \ref{lemwitt}. %$$d=\frac{y}{x}+\left(\Tr\left(\frac{y}{x}\right)+1\right)\frac{x}{\Tr(x)}.$$
Then $e=(\delta/\xi)y+(\omega/\epsilon)$ with
$$\delta=x^{20} + \mu^4x^{16} + \mu^2x^{14} + \mu^9x^{12} + \mu^{13}x^{10} + \mu^8x^8 +
    \mu^5x^6 + \mu^5x^4 + \mu^5,$$
$$
\begin{array}{lll}
&& \xi=x^{19} + \mu^4x^{15} + \mu x^{11} + \mu^6x^7 +
    \mu^4x^3,
\end{array}
$$
\begin{eqnarray*}
\omega&=&x^{39} + \mu x^{38} + \mu^5x^{37} + \mu^6x^{36} + \mu^{14}x^{35} +
    \mu^9x^{34} + \mu^8x^{33} + \mu^2x^{32} + \mu^{10}x^{31} \\&& +\mu^{10}x^{30} +
    \mu^8x^{29} + \mu^{11}x^{28} + \mu^4x^{27} + \mu^9x^{26} + \mu^3x^{25} +
    \mu^{10}x^{24} + x^{23} + \mu^9x^{22} \\&& + x^{21} + \mu^{11}x^{20} + x^19 + \mu^5x^{18} +
    \mu^6x^{17} + x^{16} + \mu^8x^{14} + \mu^{12}x^{13} + \mu^{10}x^{12} \\&& + \mu^2x^{11}+
    \mu^{13}x^{10} + \mu^5x^9 + \mu^4x^8 + \mu^2x^7 + \mu^8x^6 + \mu^{11}x^5 +
    \mu^6x^4 + \mu^6x^3 \\&& + \mu^7x^2 + \mu^2x + \mu^3;
\end{eqnarray*}
\begin{eqnarray*}
\epsilon&=&x^{36} + \mu^5x^{34} +
    \mu^{14}x^{32} + x^{30} + \mu^2x^{28} + \mu^4x^{26} + \mu^{10}x^{24} + \mu^7x^{22} +
    \mu^4x^{20} + \\ && \mu^{12}x^{18} + \mu x^{16} + \mu^5x^{14} + x^{12} + \mu^{14}x^{10} +
    \mu^{13}x^8 + \mu^{13}x^6 + \mu^5x^4 + \mu x^2.
    \end{eqnarray*}

    This time, we obtain an irreducible plane curve $\cC$ with affine equation
\begin{eqnarray*}
F(X,Y)&=&X^{40} + \mu X^{39}+ X^{38}Y+ \mu^{5}X^{38}+ X^{37}Y^{2}+ X^{37}Y+ \mu^{6}X^{37}+
\mu^{5}X^{36}Y + \\&& \mu^{14}X^{36 }+\mu^{5}X^{35}Y^{2 }+ \mu^{5}X^{35}Y+ \mu^{9}X^{35 }+
\mu^{14}X^{34}Y + \mu^{8}X^{34 }+ \mu^{14}X^{33}Y^{2 }+ \\ &&\mu^{14}X^{33}Y + \mu^{2}X^{33 }+
\mu^{8}X^{32}Y+ \mu^{10}X^{32 }+ X^{31}Y^{2 }+ X^{31}Y+ \mu^{10}X^{31 }+ \mu^{10}X^{30}Y
+ \\&&
\mu^{8}X^{30 }+ \mu^{2}X^{29}Y^{2}+ \mu^{2}X^{29}Y+ \mu^{11}X^{29}+ \mu^{8}X^{28}Y +
 \mu^{4}X^{28 }+ \mu^{4}X^{27}Y^{2 }+ \\&&
 \mu^{4}X^{27}Y+ \mu^{9}X^{27 }+ \mu^{4}X^{26}Y +
    \mu^{3}X^{26 }+ \mu^{10}X^{25}Y^{2 }+ \mu^{10}X^{25}Y+ \mu^{10}X^{25 }+\\&&
  \mu^{3}X^{24}Y+ X^{24 }+ \mu^{7}X^{23}Y^{2 }+ \mu^{7}X^{23}Y+ \mu^{9}X^{23 }+ X^{22}Y+ X^{22 }+
    \mu^{4}X^{21}Y^{2 }+ \\ &&\mu^{4}X^{21}{Y }+ \mu^{11}X^{21 }+ X^{20}Y+ X^{20 }+
    \mu^{12}X^{19}Y^{2 }+ \mu^{12}X^{19}Y + \mu^{5}X^{19 }+ X^{18}Y + \\ &&\mu^{6}X^{18 }+
    \mu X^{17}Y^{2}+ \mu X^{17}Y+ X^{17 }+ \mu^{6}X^{16}Y+ \mu^{5}X^{15}Y^{2 }+
    \mu^{5}X^{15}Y + \mu^{8}X^{15 }+ \\&& \mu^{12}X^{14 }+ X^{13}Y^{2 }+ X^{13}Y + \mu^{10}X^{13 }+
    \mu^{12}X^{12}{Y }+ \mu^{2}X^{12 }+ \mu^{14}X^{11}Y^{2 }+ \mu^{14}X^{11}{Y }+ \\&&\mu^{13}X^{11 }+
    \mu^{2}X^{10}Y + \mu^{5}X^{10 }+ \mu^{13}X^{9}Y^{2 }+ \mu^{13}X^{9}Y + \mu^{4}X^{9 }+
    \mu^{5}X^{8}Y + \mu^{2}X^{8 }+ \\&&\mu^{13}X^{7}Y^{2 }+ \mu^{13}X^{7}Y + \mu^{8}X^{7 }+
    \mu^{2}X^{6}Y + \mu^{11}X^{6 }+ \mu^{5}X^{5}Y^{2 }+ \mu^{5}X^{5}Y + \mu^{6}X^{5 }+\\&&
    \mu^{11}X^{4}Y + \mu^{6}X^{4 }+ \mu X^{3}Y^{2 }+ \mu X^{3}Y + \mu^{7}X^{3 }+ \mu^{6}X^{2}Y
    + \mu^{2}X^{2 }+ \mu^{3}X + \mu^{2}Y{=0}.
 \end{eqnarray*}
A non-singular model $\cX$ has genus and $2$-rank equal to $9$, and it provides another example
of case (ia) of Theorem \ref{princ1}.
\subsection{Case (ib)} Let $q=16$. For a primitive element $\mu$ of ${\mathbb{F}}_{16}$, let $\cX$ be the curve which is the non-singular model of the irreducible plane curve $\cC$ with affine equation  $$
\begin{array}{lll}
F(X,Y)&=&Y^4X^7 + \mu^5Y^4X^5 + \mu^{13}Y^4X^3 + \mu^9Y^4X + YX^7 \mu^5YX^5 + \\
&&\mu^{13}YX^3 + \mu^9YX + X^8 + \mu^2X^6 + \mu^8X^4
        + \mu^3X^2 + \mu^2=0.
\end{array}
$$
{}From a computer aided computation performed by MAGMA, $\cX$ has genus $9$ and $\aut(\cX)$ has a subgroup  $S$ {of} order $32$ such that
${S}\cong D_8\times C_2$. Furthermore, $\bar{\cX}=\cX/C_2$ has genus $5$ and $\aut(\bar{\cX})$ has a dihedral subgroup of order $8$. Therefore,
$\bar{\cX}$ is a curve of type {(ib)}.
\subsection{Case (ii)}
 Let $\cX$ be the hyperelliptic curve which is the non-singular model of the projective irreducible plane curve {$\cC$} of degree $q+2$ with affine equation
\begin{equation*}
\label{exeq} (Y^2+Y+X)(X^q+X)+ \sum_{\alpha\in {\mathbb{F}_q}} \frac{X^q+X}{X+\alpha}\,=0.
\end{equation*}

It is easily seen that $\cC$ has exactly two points at infinity, namely $X_\infty=(1,0,0)$ and $Y_\infty=(0,1,0)$. Both are  ordinary singularities. More precisely, $X_\infty$ and $Y_\infty$ are singular points of $\cC$ with multiplicity $q$ and $2$, respectively. No affine point of $\cC$ is singular. Therefore, $\cX$ has genus
$$\gg=\ha\,(q+1)q-1-\ha\, q(q-1)=q-1,$$ see \cite[Theorem 5.57]{hirschfeld-korchmaros-torres2008}.
For $\beta \in {\mathbb{F}}_q$, let $\mu\in \K$ such that $\mu^2+\mu=\beta$. Then the map $$\varphi_\mu:\,\, (x,y) \to (x+\beta,y+\mu)$$  preserves $\cC$ and hence it is $\K$-automorphism of $\cX$. These maps form a $\K$-automorphism group $S$ of $\cX$. Obviously, $S$ is an elementary abelian group of order $2q$.

Since $2q=2\gg+2$, $\cX$ provides an example for case (ii) of Theorem \ref{princ1}.

\subsection{Case (iii)}\label{caseiii}
Let $\cX$ be the non-singular model of the projective irreducible plane curve $\cC$ of degree $2q$ with affine equation
\begin{equation*}
\label{exeq1} (Y^q+Y)(X^q+X)+1=0.
\end{equation*}

As in the preceding example, $\cC$ has exactly two points at infinity, namely $X_\infty=(1,0,0)$ and $Y_\infty=(0,1,0)$; both are {ordinary singularities} of multiplicity $q$. The tangents to $\cC$ at $X_\infty$ are  the lines $v_\mu$ {with} equation $Y-\mu=0$ with $\mu\in {\mathbb{F}}_q$. Similarly for $Y_\infty$ and the lines $h_\mu$ of equation $X-\mu=0$.
No affine point of $\cC$ is singular.  Therefore $\cX$ has genus
$$\gg=\ha\,(2q-1)(2q-2)-q(q-1)=(q-1)^2,$$
see \cite[Theorem 5.57]{hirschfeld-korchmaros-torres2008}.  For $\ga,\gb\in {\mathbb{F}}_q$ the map
$$\varphi_{\ga,\gb}\,:\,(X,Y)\to (X+\ga,Y+\gb)$$ preserves $\cC$ and so it is a $\K$-automorphism of $\cX$. Here,
$E=\{\varphi_{\ga,\gb}|\ga,\gb\in {\mathbb{F}}_q\}$ is an elementary abelian group of order $q^2$. Also, the map
$$\rho\,:\,(X,Y)\to (Y,X)$$ preserves $\cC$ and hence it is a further $\K$-automorphism of $\cX$. The group generated by $E$ together with $\rho$ is the the semidirect product $E\rtimes \langle\rho\rangle$ and it has order $2q^2$. Since $2q^2>2((q-1)^2-1)=2(\gg-1)$, Nakajima's bound implies that $E\rtimes \langle\rho\rangle$ is not properly contained in a $2$-subgroup of $\aut(\cX)$.
Let $S=E\rtimes \langle\rho\rangle$. It is easily seen that the central involutions of $S$ are the maps $\varphi_{\ga,\ga}$ with $\ga\in {\mathbb{F}}_q$ and $\ga\neq 0$.

We show that no non-trivial element in $S$ fixes a point of $\cX$. Obviously, no non-trivial element in $S$ fixes an affine point. Since the point $U=(1,1,0)$ is not in $\cC$ and $\rho$ interchanges the points $X_\infty$ and $Y_\infty$, no point in $\cX$ is fixed by an element in the coset of $E$ containing $\rho$. This holds true for any non-trivial element in $E$, since $\varphi_{\ga,\gb}$ preserves no line of type $h_\mu$ or $v_\mu$, and hence it preserves no branch centered either at $X_\infty$ or $Y_{\infty}$.

Therefore, every central involution of $S$ is inductive, and hence $\cX$ is an example for case (iii) in Theorem \ref{princ1} with
\begin{equation}
\label{eq18marter}
{\mbox{$|S|=2(\gg-1)+4q-2$ with $\gg=(q-1)^2$ and $q=2^h\geq 4$}}.
\end{equation}
Here, Nakajima's bound is only attained for $q=4$.

\subsection{Example of an inductive sequence of curves}
The procedure described in Introduction starting with $\cX$ {as in Subsection \ref{caseiii}} and ending with a curve free from inductive central involutions is now illustrated in the smallest case, $q=4$. With the above notation, $\gg=9$ and $|S|=4(\gg-1)=32$. As we have pointed out, $u=\varphi_{1,1}$ is an inductive central involution of $S$.
{}From (\ref{eq1}) applied to $\langle u \rangle$,
$$16=2\gg-2=2(2\bar{\gg}-2),$$
where $\bar{\gg}$ is the genus of the quotient curve $\bar{\cX}=\cX/\langle u \rangle$. Hence $\bar{\gg}=5$. Similarly,
$\bar{\cX}$ has $2$-rank $5$. The factor group $\bar{S}=S/\langle u \rangle$ is a subgroup of $\bar{\cX}$ of order $16$.
Thus $|\bar{S}|=16=4(\bar{\gg}-1)$. So, Nakajima's bound is attained by $\bar{\cX}$.
Since the function field $\K(\cX)$ is $\K(x,y)$ with $(x^4+x)(y^4+y)+1=0$, its subfield generated by $t=x+y$ and $z=y^2+y$ is the function field $\K(\bar{\cX})$. It is easily seen that
$(z^2+z)(t^4+t+z^2+z)+1=0$, that is, $\bar{\cX}$ is the {non-singular} model of the projective irreducible plane curve $\bar{\cC}$ with equation $$(X^2+XZ)(Y^4+YZ^3+X^2Z^2+XZ^3)+Z^4=0.$$
%%modifica 12 novembre 2009
{}From computations performed by MAGMA, $\bar{\cX}$ has exactly $28$ $\mathbb{F}_{16}$-rational points.
 %and a Sylow $2$-subgroup $S$ of $\aut(\bar{\cX})$ and the quotient curves of $\bar{\cX}$ with respect to the central involutions of $\bar{S}$.
 Since $\bar{\cX}$ has genus $5$, Nakajima's bound yields that $|\bar{S}|\leq 16$. Actually, the bound is attained as MAGMA computations show that $\aut(\bar{\cX})$ contains the following three $\K$-automorphisms, where $\mu$ is a primitive element of $\mathbb{F}_{16}$:
\begin{eqnarray*}
\psi_1 &=& \,(X,Y,Z) \to (XY^2 + X^2Z + XYZ+ \mu^{10}Y^2Z + XZ^2+ \mu^5YZ^2 + \mu^5Z^3,\\
&& XY^2 + X^2Z + XYZ + \mu^{10}Y^2Z + \mu^{10}YZ^2+ \mu^5Z^3,
Y^2Z + YZ^2 + Z^3);\\
\psi_2 & =& \,(X,Y,Z) \to (X,Y+Z,Z);\\
\psi_3 &=& \,(X,Y,Z) \to (X+Z,Y+Z,Z).
\end{eqnarray*}
They generate indeed a subgroup $\bar{S}$ of order $16$. More precisely, $\langle \psi_1,\psi_2\rangle$ is a dihedral group $D_4$ of order $8$ and $\psi_3$ generates a cyclic group $C_2$ of order $2$ so that $\bar{S}=D_4\times C_2$. The central involutions in $\bar{S}$ are three, namely $\psi_3$, $$\psi_4 = (X,Y,Z)\to(Y^2+XZ+YZ+Z^2,YZ+Z^2,Z^2)$$
and $$\psi_5=(X,Y,Z)\to(Y^2+XZ+YZ,YZ,Z^2).$$
Neither $\psi_3$ nor $\psi_4$ have fixed point on $\cX$ while $\psi_5$ does have four, namely
$$
{P_1=(\mu^5,1,1),\,P_2=(\mu^{10},1,1),\,P_3=(\mu,0,1),\,P_4=(\mu^{10},0,1).}$$
Furthermore, $\bar{S}$ has two orbits on the set of $\mathbb{F}_{16}$-rational points of $\bar{\cX}$, of sizes  $\ell_1=8$ and $\ell_2=4$.
{}From Lemma \ref{indu1}, both $\psi_3$ and $\psi_4$ are inductive involutions of $\bar{S}$.

The quotient curve $\bar{\bar{\cX}}_3=\bar{\cX}/\langle \psi_5\rangle$ is an elliptic curve. This follows from (\ref{eq2deuring}) applied to $\bar{\cX}$ and its $\K$-automorphism group $\langle \psi_5 \rangle$.

Therefore, the central involution $\psi_5$ of $\bar{S}$ is not inductive, and $\bar{\cX}$ provides an example for case (ib) of Theorem \ref{princ1}.

The quotient curve $\bar{\bar{\cX}}_1=\bar{\cX}/\langle \psi_3\rangle$ has genus and $2$-rank $3$, and  equation
$$X^4 + X^2Y^2+Y^4+ X^2YZ+ XY^2Z +X^2Z^2+ XYZ^2+YZ^3=0.$$  Hence $\bar{\bar{\cX}}_1$ is a non-singular plane quartic. Also, $\bar{\bar{S}}_1=\bar{S}/\langle \psi_3\rangle$ is a dihedral group of order $8$. This shows that Nakajima's bound is attained by $\bar{\bar{\cX}}_1$. As we have already pointed out, $\psi_3$ is an inductive central involution of $\bar{S}$ as it fixes no point of $\cX$.
This can also be shown using the fact that $\aut(\bar{\bar{\cX}}_1)$ is the projective group $PSL(2,7)$ and that a dihedral subgroup of $PSL(2,7)$ of order $8$ is known to fix no point in the plane. Therefore $\bar{\cX}$ is an example for case (iii) in Theorem \ref{princ1}, and also illustrates Remark \ref{reml=2} with a dihedral group.

The quotient curve $\bar{\bar{\cX}}_2=\bar{\cX}/\langle \psi_4\rangle$ is a hyperelliptic curve of genus $3$ and $2$-rank $3$, defined by the   affine equation
\begin{eqnarray*}
Y^2+(\mu^{10}X^4 + X^3 + 1)Y = \mu^{13}X^8 + \mu^5X^7 + \mu^3X^6+\\
 + \mu^3X^5 + \mu^{14}X^4 + \mu^7X^3 + \mu^{11}X^2 + X+ 1,
\end{eqnarray*}
and $\bar{\bar{S}}_2=\bar{S}/\langle \psi_4\rangle$ is an elementary abelian group of order $8$. As we have already observed, {$\psi_4$}
is an inductive  central involution. This can also be shown ruling out the possibility that  $\bar{\bar{S}}_2$ fixes a point of $\bar{\bar{\cX}}_1$. For this purpose, assume on the contrary the existence of a point $P\in \bar{\bar{\cX}}_2$ fixed by $\bar{\bar{S}}_2$.
{We} show that there exists another fixed point $P'\in \bar{\bar{\cX}}_1$ of $\bar{\bar{S}}_2$. Observe that $\bar{\bar{\cX}}_2$ is defined over $\mathbb{F}_{16}$. Furthermore, it has exactly $30$ $\mathbb{F}_{16}$-rational points. So, if $P$ is an $\mathbb{F}_{16}$-rational point, $\bar{\bar{S}}_2$ induces a permutation group on the set of the remaining $29$ $\mathbb{F}_{16}$-rational points. As $29$ is an odd number, $\bar{\bar{S}}_2$ must fix some of those points, and $P'$ may be any of them. If $P$ is not defined over $\mathbb{F}_{16}$, the Frobenius image of $P$ can be taken for $P'$. Now, (\ref{eq2deuring}) applied to $\bar{\bar{S}}_2$ gives $2\ge 8(-1)+14$,  a contradiction. Therefore $\bar{\cX}_2$ is another example for case (iii) in Theorem \ref{princ1}, and also illustrates Remark \ref{reml=2} with an elementary abelian group.

\subsection{Example of a curve of genus $\gg$ with a semidihedral $\K$-automorphism group of order $2(\gg-1)$}
For a primitive element $\mu$ of $\mathbb{F}_{16}$, let $\cX$ be a non-singular model of the irreducible plane curve defined with an affine
equation $F(X,Y)=f_1(X)Y^4+f_2(X)Y^2+f_3(X)Y+f_4(X)$ where
$$
\begin{array}{lll}
f_1(X)&=&X^{70}+\mu^{14}X^{66}+\mu^9X^{62}+\mu^{10}X^{58}+\mu^{12}X^{54}+
\mu^5X{46}+\mu^7X^{42}+\mu^{13}X^{38}+\\&&\mu^{2}X^{30}+\mu^{9}X^{26}+
\mu^{10}X^{22}+X^{18}+\mu^{11}X^{10}+\mu^6X^6;
\end{array}
$$
$$
\begin{array}{lll}
f_2(X)&=& X^{72}+X^{70}+\mu^{14}X^{68}+\mu^{13}X^{66}+\mu X^{64}+\mu^{14}X^{62}+
X^{60}+\mu^{13}X^{58}+\\&&X^{56}+\mu^5X^{54}+\mu X^{52}+X^{50}+\mu^5X^{48}+
\mu^5X^{46}+\mu^{11}X^{44}+\mu^{13}X^{42}+\\&&\mu^9X^{40}+X^{38}+\mu^8X^{36}+
\mu^3X^{34}+\mu^{12}X^{32}+\mu^7X^{30}+\mu^9X^{28}+\mu^8X^{26}+\\&&\mu^{10}X^{24}+
\mu^9X^{22}+\mu^5X^{20}+\mu^2X^{18}+
+\mu^3X^{16}+\mu^2X^{14}+\mu^5X^{12}+\\&&
\mu^8X^{10}+\mu^{11}X^8+\mu^6X^6+\mu^7X^4
\end{array}
$$
$$
\begin{array}{lll}
f_3(X)&=&X^{72}+\mu^{14}X^{68}+\mu^2X^{66}+\mu X^{64}+\mu^4X^{62}+X^{60}
+\mu^9X^{58}+X^{56}+\mu^{14}X^{54}+\\&&\mu X^{52}+X^{50}+\mu^{5}X^{48}+
\mu^{11}X^{44}+\mu^{5}X^{42}+\mu^{9}X^{40}+\mu^{6}X^{38}+\mu^{8}X^{36}+\\&&
\mu^3X^{34}+\mu^{12}X^{32}+\mu^{12}X^{30}+\mu^{9}X^{28}+\mu^{12}X^{26}+
\mu^{10}X^{24}+\mu^{13}X^{22}+\\&&\mu^5X^{20}+\mu^8X^{18}+\mu^3X^{16}+
\mu^2X^{14}+\mu^5X^{12}+\mu^7X^{10}+\mu^{11}X^8+\mu^7X^4;
\end{array}
$$
$$
\begin{array}{lll}
f_4(X)&=&X^{76}+\mu^5X^{74}+\mu^7X^{72}+\mu^3X^{70}+\mu^{9}X^{68}+\mu^{12}X^{66}+
\mu^{6}X^{64}+\mu^{12}X^{62}+\\&&\mu^{3}X^{60}+\mu^{9}X^{58}+\mu^{10}X^{56}+\mu^{12}X^{54}+\mu^{12}X^{52}+\mu^{10}X^{50}+\mu X^{48}+\\&&\mu^{6}X^{46}+
\mu^{5}X^{44}+
\mu^3X^{42}+\mu^{12}X^{40}+\mu^{14}X^{38}+\mu^{13}X^{36}+\mu^{14}X^{34}+\\&&
\mu^3X^{32}+\mu^6X^{30}+\mu^4X^{28}+\mu^13X^{26}+\mu^6X^{24}+
X^22+\mu^{12}X^{20}+\mu^2X^{18}+\\&&\mu^3X^{16}+\mu^{10}X^{14}+\mu^{6}X^{12}+
X^{10}+\mu^{12}X^{6}+\mu^6X^4+\mu^{13}X^2+\mu^9.
\end{array}
$$

{}From MAGMA computation, $\cX$ has genus $17$ and its $2$-rank equals $9$. Further, $\cX({\mathbb{F}}_{16})$, the set of all ${\mathbb{F}}_{16}$-rational points of $\cX$, has size $8$: all {of them} are branches centered at $Y_\infty$, while the ${\mathbb{F}}_{16}$-automorphism group $G$ of $\cX$ is a semi-dihedral group of order $32$ with the unique central involution $u:\,(X,Y)\to(X,Y+1)$.
In particular, $u$ is the unique involution of the cyclic subgroup of $G$ of order $16$ and fixes $\cX({\mathbb{F}}_{16})$ pointwise.  {}From  (\ref{eq2deuring}), $u$  fixes no more points on $\cX$.

The function field of the quotient curve $\bar{\cX}=\cX/\langle u \rangle$ is the subfield $\K(\bar{\cX})=\K(x,z=y^2+y)$ of $\K(\cX)$ and hence $\bar{\cX}$ is a non-singular model of the plane algebraic curve with affine equation
$$Z^2+(f_1(X)+f_2(X))Z+f_4(X)=0.$$
Actually, $\bar{\cX}$ is an elliptic curve. Therefore, the central involution $e$ is not inductive.  

Finally, comparison with Nakajima's bound $|\aut(\cX)|\leq 4(\gamma-1)\leq 32$ shows that $G=\aut(\cX)$. Therefore, if the first hypothesis in (I)  is relaxed to $|S|\geq 2{\gg}-2$, more groups enter in play when an analog of Theorem \ref{princ} is considered.   

%$$\psi_1\,:\, (X,Y) \to (X+1,Y+1),$$
%$$\tau_{a,b}\,:\, (X,Y) \to (Y^2+Y+X+a,Y+b)$$
%are $\K$-automorphisms of $\bar{\cX}$. They form a group of order $16$ that coincides \muith $\bar{S}$.
%Also, $\bar{S}=D_4\times C_2$, the direct product of a dihedral group of order $8$ by a group $C_2$ of order $2$. Here, for $b\in %{\mathbb{F}}_4\setminus{\mathbb{F}}_2,$
%$$D_4=\langle \tau_{0,b},\psi_{0,b}\rangle,$$
%\muith $$\tau_{0,b}^4=\psi_{0,1}^2=1,\,\,\,\psi_{0,1}\tau_{0,b}\psi_{0,1}=\tau_{0,b^2+b}=\tau_{0,b}^{-1},$$
%and $C_2=\langle \psi_{0,1} \rangle$.

\vspace{0,5cm}\noindent {\em Authors' addresses}:

\vspace{0.2 cm} \noindent Massimo GIULIETTI \\
Dipartimento di Matematica e Informatica
\\ Universit\`a degli Studi di Perugia \\ Via Vanvitelli, 1 \\
06123 Perugia
(Italy).\\
 E--mail: {\tt giuliet@dipmat.unipg.it}

\vspace{0.2cm}\noindent G\'abor KORCHM\'AROS\\ Dipartimento di
Matematica\\ Universit\`a della Basilicata\\ Contrada Macchia
Romana\\ 85100 Potenza (Italy).\\E--mail: {\tt
gabor.korchmaros@unibas.it }

    \end{document}